\documentclass[12pt]{amsart}

\usepackage{amsfonts, amstext, amsmath, amsthm, amscd, amssymb, enumerate, url}
\usepackage{tikz-cd}
\usepackage{xcolor}
\usepackage{svg}
\usepackage{subfigure}
\usepackage{faktor}
\usepackage{float}
\usepackage{stmaryrd}

\usepackage[parfill]{parskip}

\usepackage[colorlinks,citecolor=cyan,linkcolor=magenta]{hyperref}
\usepackage[nameinlink]{cleveref}
\usepackage{cancel}

\usepackage[margin=1in]{geometry}

\newtheorem{thm}{Theorem}[section]
\newtheorem{cor}[thm]{Corollary}
\newtheorem{lemma}[thm]{Lemma}
\newtheorem{prop}[thm]{Proposition}

\newtheorem*{thm:mainthm}{\Cref{thm:mainthm}}

\theoremstyle{definition}
\newtheorem{defn}[thm]{Definition}

\newtheorem{qn}[thm]{Question}

\numberwithin{equation}{section}
\numberwithin{figure}{section}

\newcommand{\ZZ}{\mathbb{Z}}
\renewcommand{\epsilon}{\varepsilon}
\renewcommand{\phi}{\varphi}

\newcommand{\red}{\mathrm{red}}
\newcommand{\ca}{^{\textrm{cut}}}

\title[Irreducible train tracks for pseudo-Anosov maps]{Irreducible train tracks for pseudo-Anosov homeomorphisms}
\author{Ross Griebenow}
\date{\today}

\begin{document}

\begin{abstract}
    We describe a construction of invariant train tracks with irreducible transition matrix for pseudo-Anosov homeomorphisms. 
    This fills what seems to be a gap in the literature concerning the existence of such train tracks.
    The construction starts with an invariant train track associated to the veering triangulation of the mapping torus of the homeomorphism and then uses the veering property to characterize  branches which obstruct irreducibility, finally modifying the track to bypass these obstructions.
\end{abstract}

\maketitle
 
\section{Introduction}\label{intro}
Given a pseudo-Anosov homeomorphism $f$, an invariant train track provides a combinatorial way of recording how curves and other objects behave under iteration of $f$.
The existence of invariant train tracks with irreducible transition matrix
allows the application of Perron-Frobenius theory to the study of the properties of these maps. 
For example, when an invariant train track is irreducible it can be used to build a Markov partition for the map. 
The existence of irreducible invariant train tracks is often taken for granted 
(See e.g. \cite{FR}, \cite{F}, \cite{J}, \cite{EF}, \cite{M}, \cite{Pe}) 
but to our knowledge no complete proof is currently in the literature. See \Cref{discuss} for more details.

\par The veering triangulation associated to $f$ provides a unified way to study both the topology of its mapping torus and the dynamics of its invariant train tracks.
One important datum attached to a veering triangulation is its \textit{flow graph}, which encodes a Markov partition of a pseudo-Anosov map or flow. Flow graphs have been used by Landry--Minsky--Taylor used to count periodic points of pseudo-Anosov maps, and to define polynomial invariants of pseudo-Anosov flows without perfect fits \cite{LMT1}.
When the flow graph associated to the veering triangulation of a pseudo-Anosov mapping torus is strongly connected, the triangulation can be used to obtain invariant train tracks with 
irreducible transition matrices. 
\par For general veering triangulations, Agol--Tsang have characterized the strongly connected components of the flow graph, which consist of one ``reduced component" from which there are paths to every vertex, and some number of ``infinitesimal cycles" which do not have any paths back to the rest of the graph \cite{AT}. 
By studying the interaction between these infinitesimal components and the invariant train tracks arising from the triangulation, we are able to produce new train tracks where the obstructions to irreducibility coming from the infinitesimal components have been bypassed. This allows us to prove the following:

\begin{thm}\label{mainthm}
    Every pseudo-Anosov homeomorphism has an invariant train track whose transition matrix is irreducible.
\end{thm}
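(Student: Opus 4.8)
The plan is to carry out the three-step strategy sketched in the introduction: extract an invariant train track from the veering triangulation, locate the branches that obstruct irreducibility using Agol--Tsang's description of the flow graph, and then surger the track to remove these obstructions. First, let $f\colon S\to S$ be pseudo-Anosov. If $S$ is closed I begin by puncturing $S$ along a periodic singular orbit so that the mapping torus $M_f$ admits Agol's veering triangulation $\tau_v$; the closed case will be recovered at the end by refilling this puncture. Since $\tau_v$ is layered over the fibration with fiber $S$, its stable branched surface $B^s$ meets a fiber in a train track $\tau\subset S$ for which $f(\tau)$ is carried by $\tau$. This $\tau$ is the candidate invariant track, with transition matrix $M$, and the key point I would establish here is that the combinatorics of $M$ — in particular its strongly connected components — are governed by the flow graph $\Phi$ of $\tau_v$: each branch of $\tau$ records a passage of the flow through an edge of $\tau_v$, so the transition digraph of $\tau$ is (up to subdivision) the flow graph.

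\textbf{Identifying the obstructions.} By Agol--Tsang \cite{AT}, the strongly connected components of $\Phi$ consist of one reduced component $\Phi_{\red}$, with a directed path to every vertex, together with finitely many infinitesimal cycles $c_1,\dots,c_k$ from which there is no directed path back to $\Phi_{\red}$. Translating through the previous step, $M$ is block-triangular: the branches of $\tau$ split into ``bulk'' branches (a sub-track $\tau_0$ carrying $\Phi_{\red}$ and the transient vertices) and, for each $i$, a small loop $\ell_i$ of branches encircling the $i$-th singular orbit — an infinitesimal polygon in $\tau$ — which is carried into itself by $f$ and which receives, but never returns, the flow coming from $\tau_0$. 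When there are no infinitesimal cycles the transition graph is already strongly connected and we are done; in general the veering property is what pins down the local picture of each $\ell_i$, and I would use it to show that near each singular orbit the stable track has a standard configuration (a branch looping around the puncture, attached to the bulk along a single switch), so that a controlled surgery is available.

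\textbf{Bypassing the obstructions.} For each $i$ I would modify $\tau$ in a neighborhood of $\ell_i$ by a sequence of folds and shifts — using the room guaranteed by the veering configuration — that reroutes the infinitesimal loop so that, after iterating $f$, a branch of the rerouted loop runs across a bulk branch of $\tau_0$; in transition-graph terms this inserts the missing edge $c_i\to\Phi_{\red}$ while keeping $f(\tau')$ carried by the new track $\tau'$. Performing this simultaneously at all singular orbits, and checking that $\tau'$ remains filling and birecurrent and that the carrying relation $f(\tau')\prec\tau'$ survives, produces a train track whose transition graph is strongly connected: every infinitesimal cycle now reaches $\Phi_{\red}$, $\Phi_{\red}$ still reaches every vertex, and each transient vertex — lying on a path from $\Phi_{\red}$ to some $c_i$ — now lies on a cycle. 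In the closed case I then refill the punctured orbit and verify that the induced collapse of $\tau'$ is still an irreducible invariant train track for the original $f$.

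The hard part is the surgery step. One must (a) verify that the veering combinatorics genuinely provide, at \emph{every} singular orbit, enough structure to perform the rerouting; (b) re-establish the carrying relation $f(\tau')\prec\tau'$ globally after the track has been altered, since a local modification can propagate; and (c) confirm that the surgeries actually merge $\Phi_{\red}$, the transient part, and all the $c_i$ into a single strongly connected component rather than merely relocating the reducibility. Controlling the interaction between distinct infinitesimal cycles, and ruling out the creation of a new obstruction, is where the bulk of the technical work will lie.
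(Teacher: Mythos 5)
There is a genuine gap, and it sits exactly where you flag the ``hard part.'' Your strategy is to keep the obstructing branches and \emph{add} connectivity: reroute each infinitesimal piece by folds and shifts so that the missing edges $c_i\to\Phi_{\red}$ appear in the transition graph while $f(\tau')\prec\tau'$ is preserved. You never construct this surgery, and nothing in the veering combinatorics is shown to supply it; points (a)--(c) in your last paragraph are precisely the content that would have to be proved. Worse, the asserted local model is incorrect: the infinitesimal branches do not form ``a small loop of branches encircling a singular orbit, attached to the bulk along a single switch.'' They are dual to top edges of tetrahedra in Agol--Tsang walls, and the paper shows (\Cref{thm:infcomp}) that each component of the infinitesimal subgraph is an \emph{embedded train path} (an arc, possibly of several branches) -- it cannot be a cycle, since a cycle of branches fixed by a power of $f$ would be an essential closed curve invariant under that power, contradicting pseudo-Anosovness. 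The actual local structure near such an arc (colors of dual edges, the fact that all buttress branches on one side of the arc point the same way, and that patch sides at infinitesimal cusps contain non-infinitesimal branches) is what the paper has to establish via \Cref{wallprop}, \Cref{thm:infcompstruct}, and \Cref{thm:infcomppatch}, and none of it matches the ``loop around a puncture'' picture your surgery is predicated on.

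Note also that folds and shifts keep a generic track generic, so if your rerouting worked it would produce a \emph{generic} irreducible invariant track; the paper's construction necessarily yields a non-generic track whenever obstructions exist and explicitly leaves the generic case as an open question, which is strong evidence that the rerouting you propose is not routine. The paper's route is the opposite of yours: rather than adding edges $c_i\to\Phi_{\red}$, it \emph{deletes} the obstructions, collapsing each infinitesimal arc $I_\alpha$ to a single (typically high-valence) switch $v_\alpha$, with the new angle structure dictated by the pieces of patches in a neighborhood $N(I_\alpha)$. It then builds a map $\hat f$ homotopic to $f$ that is a train track map for the new track $\hat\tau$, so that $\hat f\circ f^{-1}$ is a support map for $f(\hat\tau)\prec\hat\tau$, and the transition matrix is exactly the adjacency matrix of $G_{\red}$, which is irreducible by Agol--Tsang. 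So while your reduction to the flow-graph decomposition matches the paper, the construction that actually proves the theorem is missing from your argument and the one you sketch rests on a false local picture.
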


\par In slightly more detail, 
given a boundaryless surface with negative Euler characteristic
and a train track folding sequence (equivalently, splitting sequence) for a pseudo-Anosov $f\colon S \to S$, one can construct the veering triangulation of the mapping torus of $f$, whose
flow graph encodes the transition matrix for this train track. Using the work of \cite{AT}, we show that the only obstructions to irreducibility are branches which are dual to infinitesimal cycles of the flow graph and characterize the structure of veering tracks near these branches. This local picture allows us to modify the track by contracting the obstructing branches, and recover a support map for the modified train track with irreducible transition matrix.

\subsection{Connections to literature}\label{discuss}

\par Work of Penner-Papadopoulos \cite[Theorem 4.1]{PP} is sometimes cited as producing $f$-invariant generic train tracks with irreducible transition matrix for a pseudo-Anosov $f$, however it was pointed out to us by Chris Leininger that this construction seems to only produce train tracks which are $f^n$-invariant with irreducible transition matrix for some $n$ possibly greater than one.
\par In the setting of \cite[Theorem 4.1]{PP},
the support map $\sigma$ produced for the $f$-invariant train track $\tau$ is not required to map switches to switches, rather the transition matrix is defined by choosing test points in the interiors of branches and counting the number of times the image of each branch hits each test point. Since switches don't have to map to switches, a branch $b$ of $\tau$ could e.g. be mapped by $\sigma \circ f$ over itself and a small piece of an adjacent branch. Then the transition matrix for $f$ will record that $b$ only maps over itself. However, for some $n$, $(\sigma \circ f)^n(b)$ will be mapped over all of the adjacent branch, so the transition matrix for $(\sigma \circ f)^n$ will record that $b$ maps over itself and the adjacent branch. Hence, it's possible that $f^n(\tau) \prec \tau$ has irreducible transition matrix while $f(\tau) \prec \tau$ does not.

\par Returning to discussion of the present work, we note that much of the theory of train tracks assumes that the tracks considered are \textit{\textbf{generic}}, i.e. trivalent, since this simplification can often be made without loss of generality. In particular, the train tracks associated to veering triangulations are always generic. However, if the flow graph for a pseudo-Anosov $f$ is not irreducible, \Cref{mainthm} necessarily produces train tracks which are not generic. 
This motivates the following question:

\begin{qn}
    For all $f\colon S \to S$ pseudo-Anosov, does there exist a \textit{generic} invariant train track for $f$ with irreducible transition matrix?
\end{qn}

\par \textbf{Acknowledgements.} I would like to thank Sam Taylor for his generous guidance and feedback. I would also like to thank Chi Cheuk Tsang for helpful conversations and comments on a draft of this paper.

\section{Background}\label{background}

\par A \textit{\textbf{strongly connected component}} of a directed graph $G$ is a maximal subset $C$ of the vertices of $G$ such that for any two vertices in $C$, there is a path in $G$ from the first vertex to the second.
If $G$ has only one strongly connected component, it is called \textit{\textbf{strongly connected}}.
A nonnegative square matrix $A$ is called \textit{\textbf{irreducible}} if for every pair of indices $i$, $j$ there is an $n>0$ such that the $(i,j)$-th entry of $A^n$ is positive. $A$ is irreducible if and only if it is the adjacency matrix of a strongly connected graph.
\par
    Given an orientable finite-type surface $S$ without boundary and negative Euler characteristic, a homeomorphism $f\colon S \to S$ is called \textit{\textbf{pseudo-Anosov}} if there is a pair of transverse measured singular foliations $(\Lambda^u,\mu_u)$ and $(\Lambda^s,\mu_s)$, called the \textit{\textbf{unstable}} and \textit{\textbf{stable}} foliations, and a constant $\lambda > 1$, called the \textit{\textbf{stretch factor}}, such that $f(\Lambda^u)=\Lambda^u$, $f(\Lambda^s)=\Lambda^s$ and $f(\mu^u) = \lambda \mu^u$, $f(\mu^s) = \lambda^{-1} \mu^s$. No nontrivial power of a pseudo-Anosov map fixes the homotopy class of any essential closed curve in $S$. A pseudo-Anosov map is a diffeomorphism except at the singular points of $\Lambda^u$.

    \par The \textit{\textbf{mapping torus of $f$}} is the 3-manifold
    $$ M_f = \faktor{(S \times [0,1])}{ (f(p),0) \sim (p,1)} $$ which fibers over the circle with fiber $S$ and monodromy $f$.

    \par If all of the singularities of the foliations are at the punctures of $S$, we say $f$ is \textit{\textbf{fully punctured}}. We will study general pseudo-Anosovs by deleting the singularities of the foliations to obtain a new surface $S^\circ$ on which the restriction of $f$ is fully punctured.
For full background on pseudo-Anosov homeomorphisms, see Fathi--Laudenbach--Poénaru \cite{FLP}.

\subsection{Train tracks}\label{tts}
\par A \textit{\textbf{train track}} $\tau$ is a closed 1-complex embedded in a surface $S$ with a ``smoothing" at each vertex so that $\tau$ has a well-defined tangent space at each vertex, and $S \setminus \tau$ contains no nullgons, unpunctured monogons or unpunctured bigons. See Penner-Harer \cite{PH}. The vertices of $\tau$ are called \textit{\textbf{switches}} and the edges are called \textit{\textbf{branches}}. If the switches of $\tau$ all have degree three, $\tau$ is called \textit{\textbf{generic}}.

\par Given two train tracks $\tau_1$ and $\tau_2$ on $S$, $\tau_2$ is said to \textit{\textbf{carry}} $\tau_1$ (or $\tau_1$ \textit{\textbf{is carried by}} $\tau_2$), denoted $\tau_1 \prec \tau_2$, if there is a $C^1$ map $\sigma\colon S \to S$ called the \textit{\textbf{support map}} 
such that
    \begin{enumerate}
        \item $\sigma$ is homotopic to the identity,
        \item $\sigma(\tau_1) \subseteq \tau_2$,
        \item for all points $x$ in $\tau_1$,
    $d_x \sigma_{|\tau_1}\colon T_x \tau_1 \to T_{h(x)}\tau_2$ is an isomorphism of tangent spaces,
    \end{enumerate} 
Note that carrying is transitive: if $\tau_1 \prec \tau_2$ and $\tau_2 \prec \tau_3$, then
$\tau_1 \prec \tau_3$. 
We also write $\tau_1 \prec_\sigma \tau_2$ to indicate that $\sigma$ is the support map for $\tau_1 \prec \tau_2$. A carrying map $\sigma$ for $\tau_1 \prec \tau_2$  is called \textit{\textbf{combinatorial}} if for every switch $v$ in $\tau_1$, $\sigma(v)$ is a switch in $\tau_2$.
    \par
    Given $\tau_1$ with branches $\{b^1_j\}_{j=1\ldots n}$ and $\tau_2$ with branches $\{b^2_i\}_{i=1\ldots m}$ such that $\tau_1$ is carried by $\tau_2$ with combinatorial support map $\sigma$,
    the \textit{\textbf{transition matrix}} is the $m \times n$ matrix $T = [t_{i,j}]$ where 
    $t_{i,j}$ is the number of times the image of $b^1_j$ under $\sigma$ passes over $b^2_i$.
\par
    A \textit{\textbf{train path}} is a $C^1$ immersion $p\colon [0,1] \to \tau$ so that the endpoints $\{0,1\} = \partial[0,1]$ are mapped to switches. We will sometimes conflate $p$ with its image in $\tau$. For $t \in (0,1)$ such that $p(t)$ is a switch, the sub-paths $p_0 = p_{|[0,t]}\colon [0,t] \to \tau$, $p_1 = p_{|[t,1]}\colon [t,1] \to \tau$ are also train paths (after reparametrizing). Similarly, if $p_0$ and $p_1$ are train paths such that $p_0(1)=p_1(0)$ and the differentials $d_1 p_0(\partial_t)$ and $d_0 p_1(\partial_t)$ are either both positive or both negative. The concatenation $(p_0 p_1)\colon [0,2] \to \tau$ is a train path after reparametrizing, hence any train path can be written as $(p_0\ldots p_{n-1})$, where $p_i\colon\left[\frac{i}{n},\frac{i+1}{n}\right] \to \tau$ has image consisting of a single branch. If $p$ is injective we say the train path is \textit{\textbf{embedded}}.
\par
    A \textit{\textbf{fold}} is a particular combinatorial support map $\phi$ for $\tau_1 \prec \tau_2$ which induces a bijection between the branches of $\tau_1$ and $\tau_2$
    except on a particular connected subgraph where it is defined according to either of the two pictures in \Cref{fig:fold}. The three branches of $\tau_1$ which are mapped over the small branch $e'$ in the domain of \Cref{fig:fold} are said to \textit{\textbf{fold to}} $e'$. 
    If there is a $\tau_1 \prec_\phi \tau_2$
    we say $\tau_1$ \textit{\textbf{folds to}} $\tau_2$, denoted $\tau_1 \leftharpoonup \tau_2$. If $\tau_1$ folds to $\tau_2$, $\tau_1$ is carried by $\tau_2$. If $\tau_1 \prec \tau_2$ with support map given by a sequence of folds we also say $\tau_1$ folds to $\tau_2$.
    \begin{figure}[h]
    \centering
    \includegraphics[width=0.66\textwidth]{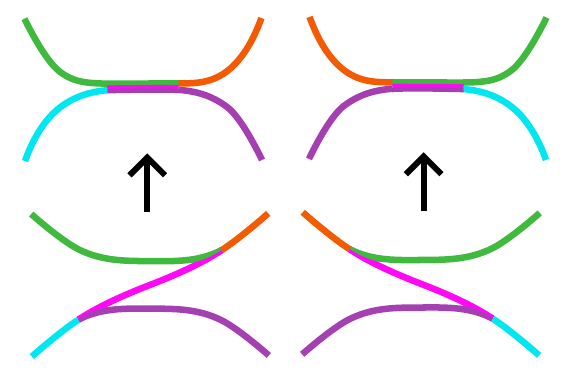}
    \caption{The two possible pictures of a fold. The fold on the left is a left fold and the one on the right is a right fold. Colors indicate the image of each branch on the top after folding.}
    \label{fig:fold}
\end{figure}
    \par
    A train track $\tau$ is an \textit{\textbf{invariant}} track for a map $f: S \to S$ with support map $\sigma$ if $f(\tau)$ is carried by $\tau$ with support map $\sigma$.
    If the transition matrix for $f(\tau) \prec_\sigma \tau$ is irreducible, we also say $\tau$ is \textit{\textbf{irreducible}}.
By smoothly isotoping $\tau$ if necessary, we may assume it is disjoint from the singularities of $\Lambda^u$. This will be convenient later when we remove the singularities of $\Lambda^u$. The following theorem due to Agol is the starting point of the construction of the veering triangulation of the mapping torus of a pseudo-Anosov $f$:

\begin{thm}[\protect{\cite[Theorem 3.5]{A}}]\label{ttfs} Given $f\colon S \to S$ pseudo-Anosov, there exists a generic invariant train track $\tau$ for $f$ such that $f(\tau)$ folds to $\tau$, i.e. there exist a finite sequence of train tracks 
$$f(\tau) = \tau_0 \leftharpoonup \tau_1 \leftharpoonup \ldots \leftharpoonup \tau_n = \tau,$$ 
where $\tau_i$ is carried by $\tau_{i+1}$ with support map given by a single fold.
\end{thm}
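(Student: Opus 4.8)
The plan is to realize $\tau$ as a periodic point of the \emph{maximal splitting operation} acting on train tracks carrying the unstable lamination $\Lambda^u$. The first observation is that splitting and folding are inverse operations: if $\tau'$ is obtained from $\tau$ by splitting a branch in the direction dictated by a carried measured lamination, then $\tau' \prec \tau$ and in fact $\tau' \leftharpoonup \tau$ in the sense of \Cref{fig:fold}, and a simultaneous split of several branches decomposes into a finite sequence of single splits. So it suffices to produce a \emph{generic} birecurrent train track $\tau$ carrying $(\Lambda^u,\mu^u)$ such that $f(\tau)$ is obtained from $\tau$ by a finite sequence of splittings; reading the resulting chain backwards gives the sequence $f(\tau)=\tau_0 \leftharpoonup \cdots \leftharpoonup \tau_n=\tau$ of single folds.

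The construction proceeds in three stages. First, from a Markov partition for $f$ (equivalently, an ideal triangulation of $S^\circ$ transverse to both foliations) one builds a train track $\tau^{(0)}$ transverse to $\Lambda^s$; by construction it carries $(\Lambda^u,\mu^u)$ and $f(\tau^{(0)})\prec\tau^{(0)}$, and after a small isotopy and finitely many splittings guided by $\mu^u$ one may take it generic, birecurrent, and disjoint from the singularities of $\Lambda^u$. Second, record the weights that $\mu^u$ assigns to the branches, call the branches of largest weight \emph{large}, and let $\mathrm{split}(\tau^{(0)})$ be the track obtained by simultaneously splitting every large branch in the direction determined by $\mu^u$ (followed by a small perturbation if a central split would force a valence-$4$ switch). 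This is again a generic birecurrent track carrying $\Lambda^u$, it is carried by $\tau^{(0)}$, and — crucially — $\mathrm{split}$ commutes with $f$, since it is defined purely from $(\Lambda^u,\mu^u)$ and $f$ preserves $\Lambda^u$ while scaling $\mu^u$ by $\lambda$. Iterating produces a splitting sequence $\tau^{(0)}\succ\tau^{(1)}\succ\tau^{(2)}\succ\cdots$, along which the maximal branch weight is non-increasing (so it tends to $0$, since one round of maximal splitting eventually strictly decreases it, while $f$ would multiply it by $\lambda>1$).

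Third, observe that on a fixed surface there are only finitely many isotopy classes of generic birecurrent train tracks carrying $\Lambda^u$ — the number of branches is bounded in terms of $\chi(S)$ — so the sequence $[\tau^{(i)}]$, viewed in the finite set of such classes modulo the $f$-action, is eventually periodic. Choose $\tau := \tau^{(i)}$ in the periodic part, with period $p$, so that $\tau^{(i+p)} = f^{m}(\tau)$ up to isotopy for some integer $m$. Comparing the weight vectors representing $\mu^u$ (which are isotopy invariant, and which strictly decrease over a full period) forces $m\ge 1$; the refinement that $m$ may be taken to be $1$ uses the rigidity of $\Lambda^u$, namely that $f(\tau)$ itself already occurs in the maximal splitting sequence of $\tau$, so the first return of the sequence to the $f$-orbit of $\tau$ is at $f(\tau)$. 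Then $f(\tau)$ is obtained from $\tau$ by $p$ rounds of maximal splitting, hence by a finite sequence of single splittings, which is exactly the desired conclusion.

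The main obstacle is the third stage: establishing genuine eventual periodicity and identifying one period with $f$ rather than a proper power of it. Eventual periodicity follows from the finiteness of the state space together with the deterministic, $f$-equivariant nature of maximal splitting, but controlling the period draws on the uniqueness of the pseudo-Anosov invariant foliation (equivalently, that no nontrivial power of $f$ fixes an essential curve). A secondary technical point is maintaining genericity and birecurrence under maximal splittings, since a simultaneous split can momentarily create a higher-valence switch; this is absorbed by a compensating perturbation that does not change the carried lamination. (An alternative route is the Bestvina--Handel algorithm, which produces an efficient train track map $g\simeq f$ with $g(\tau)\prec\tau$; one would then still have to factor its support map into folds, which is essentially the reversed splitting sequence above.)
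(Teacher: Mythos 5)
The paper does not prove \Cref{ttfs} at all: it is quoted verbatim from Agol \cite[Theorem 3.5]{A} (stated there in terms of maximal splitting sequences), so your proposal has to be measured against Agol's argument. Your first two stages (splits are inverse to folds; maximal splitting is defined by $(\Lambda^u,\mu^u)$ alone and hence commutes with $f$ up to the scaling by $\lambda$) are consistent with that argument, but stage three, which is the entire content of the theorem, has a genuine gap. The finiteness claim is unjustified and false as stated: bounding the number of branches in terms of $\chi(S)$ bounds the combinatorial types of abstract switch--branch graphs, not the isotopy classes of embedded tracks, and there are infinitely many isotopy classes of generic birecurrent tracks carrying a fixed $\Lambda^u$ (already on the once-punctured torus one gets such tracks with vertex cycles at unbounded distance from the axis of $f$, so passing to classes ``modulo the $f$-action'' does not rescue finiteness; establishing any finiteness of this kind is essentially the theorem itself, not an input to it). Moreover, even granting eventual periodicity modulo $f$, your reduction from $\tau^{(i+p)}=f^{m}(\tau^{(i)})$ to $m=1$ is circular: the ``rigidity'' you invoke --- that $f(\tau)$ itself already occurs in the maximal splitting sequence of $\tau$ --- is precisely the statement to be proved, and this is exactly the $f$-versus-$f^{m}$ pitfall the paper flags in \Cref{discuss} for the Penner--Papadopoulos construction \cite{PP}.

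The missing ingredient in Agol's proof is a comparison lemma, not a counting argument: any two birecurrent tracks fully carrying the same filling measured lamination have maximal splitting sequences that agree after finitely many steps. Applying this to $\tau^{(0)}$ and $f(\tau^{(0)})$, together with $f$-equivariance of maximal splitting, yields indices with $\tau^{(j+k)}=f(\tau^{(j)})$; the weight bookkeeping (carrying weights of $\mu^u$ are unique for a given track, $f$ scales them by $\lambda^{-1}$, and maximal weights do not increase under splitting) then forces $k\ge 1$ and rules out the wrong-direction coincidence, giving the period-one statement directly with no appeal to a finite state space. Without this lemma (or a substitute of comparable strength) your stage three does not go through. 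A secondary, fixable issue is your treatment of genericity: maximal splitting can involve central splits, and one must argue (as Agol does, using periodicity) that these do not occur along the periodic part, rather than ``perturbing'' them away, since an ad hoc perturbation breaks the canonicity on which the equivariance and comparison arguments rely.
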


\par Note that the referenced theorem is stated in terms of \textit{splits} rather than folds, which are the combinatorial inverse of folds. 
   Additionally, the referenced theorem is stronger than what we state here because the splitting sequence is shown to be canonical up to ``commuting maximal splits": at each step in the sequence, the train track is split at a branch carrying maximal measure of the invariant lamination, and the splits of maximal branches commute if there is not a unique branch carrying maximal measure.
   This theorem motivates the following definition:
\begin{defn}
    An invariant train track $\tau$ for a pseudo-Anosov map $f$ is called \textit{\textbf{veering}} if it 
    is generic and $\tau$ can be obtained from $f(\tau)$ by a sequence of folds, in which case there is a sequence of train tracks 
    $$f(\tau) = \tau_0 \leftharpoonup \tau_1 \leftharpoonup \ldots \leftharpoonup \tau_n = \tau$$ 
    where $\tau_i$ is carried by $\tau_{i+1}$ with support map consisting of a single fold.
\end{defn}
\par In \Cref{clvts} we will see that a veering train track as defined here is sufficient to produce the veering triangulation of the fully punctured mapping torus of $f$. While veering train tracks are not necessarily irreducible, they will be the beginning of our construction of irreducible tracks.
\par
    Given two train tracks $\tau_1$, $\tau_2$ on $S$, a (surface) \textit{\textbf{train track map}} is a map $t: S \to S$ so that $t(\tau_1)$ is contained in $\tau_2$, and the restriction of $t$ to $\tau_1$ is 
    such that for any train path $p\colon [0,1] \to \tau_1$, $t \circ p\colon [0,1] \to \tau_2$ is also a train path. In other words, a train track map is like a combinatorial support map except that it does not have to be homotopic to the identity map.    
    \par The following lemma will be useful for producing support maps:
\begin{lemma}\label{supplemma}
    Let $t\colon S \to S$ be a train track map taking $\tau_1$ to $\tau_2$, and $g\colon S \to S$ be a diffeomorphism.
    If $t$ is homotopic to $g$, then $g(\tau_1) \prec \tau_2$ with support map given by $t \circ g^{-1}$.
\end{lemma}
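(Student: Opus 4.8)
The plan is to verify directly that the map $\sigma := t \circ g^{-1}$ satisfies the three defining conditions for being a support map for the carrying $g(\tau_1) \prec \tau_2$: that it is ($C^1$ and) homotopic to $\mathrm{id}_S$, that $\sigma(g(\tau_1)) \subseteq \tau_2$, and that $d_x\sigma|_{g(\tau_1)}$ is an isomorphism of tangent lines for every $x \in g(\tau_1)$. One first observes that $g(\tau_1)$ is again a train track: a diffeomorphism of $S$ preserves the smoothing at switches and carries the complementary regions of $\tau_1$ homeomorphically onto those of $g(\tau_1)$, so no nullgons, monogons, or bigons are created.

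The first two conditions are formal. If $H\colon S\times[0,1]\to S$ is a homotopy from $t$ to $g$, then $H'(p,s) := H(g^{-1}(p),s)$ is a homotopy from $t\circ g^{-1}$ to $g\circ g^{-1} = \mathrm{id}_S$; and $\sigma$ is $C^1$ because $g$ is a diffeomorphism (we adopt the convention that surface train track maps are $C^1$ near the track, so that $t\circ p$ is a genuine $C^1$ train path). For the inclusion, $\sigma(g(\tau_1)) = t(g^{-1}(g(\tau_1))) = t(\tau_1) \subseteq \tau_2$ by the hypothesis on $t$.

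The substance is the differential condition. Fix $x \in g(\tau_1)$, put $y := g^{-1}(x) \in \tau_1$, so that $\sigma(x) = t(y) \in \tau_2$, and factor $d_x\sigma|_{g(\tau_1)} = \big(d_y t|_{\tau_1}\big)\circ\big(d_x g^{-1}|_{g(\tau_1)}\big)$ as a composite of linear maps between the one-dimensional tangent lines $T_x(g(\tau_1)) \to T_y\tau_1 \to T_{t(y)}\tau_2$. The factor $d_x g^{-1}|_{g(\tau_1)}$ is an isomorphism because $g^{-1}$ restricts to a $C^1$ diffeomorphism of $g(\tau_1)$ onto $\tau_1$. The factor $d_y t|_{\tau_1}$ is an isomorphism by the train track map hypothesis: the point $y$ lies in the image of some train path $p\colon[0,1]\to\tau_1$, and $t\circ p$ is again a train path, hence a $C^1$ immersion, so $dt$ does not annihilate the tangent line to $p$ at $y$, which is precisely $T_y\tau_1$; since $T_y\tau_1$ and $T_{t(y)}\tau_2$ are each one-dimensional, $d_y t|_{\tau_1}$ is an isomorphism onto $T_{t(y)}\tau_2$. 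The composite $d_x\sigma|_{g(\tau_1)}$ is therefore an isomorphism, as required.

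The only point needing genuine care — the step I would treat as the main (mild) obstacle — is the differential condition at the switches of $\tau_1$, where one must know that $dt$ does not vanish along $\tau_1$. This is exactly the content of the train track map condition: every switch lies in the image of a train path, and train paths are immersions up to and including their endpoints, so $t$ cannot collapse the tangent line at a switch. A secondary bookkeeping point is the regularity convention for $t$; once surface train track maps are taken to be $C^1$ on a neighborhood of the track, $\sigma = t\circ g^{-1}$ is a bona fide $C^1$ map, as the definition of carrying demands. With these understood, nothing further is needed; in particular transitivity of $\prec$ plays no role.
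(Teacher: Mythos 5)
Your proposal is correct and follows essentially the same route as the paper: the homotopy and inclusion conditions are checked formally, and the differential condition is verified by factoring $d_x(t\circ g^{-1})|_{g(\tau_1)}$ via the chain rule into $d g^{-1}$ (an isomorphism since $g$ is a diffeomorphism) composed with $dt|_{\tau_1}$ (an isomorphism since $t$ is a train track map). Your extra remarks unpacking why $dt$ is nonvanishing along $\tau_1$ via train paths only elaborate on what the paper asserts directly from the definition.
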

\begin{proof}
    Since $t$ and $g$ are homotopic, $t \circ g^{-1}$ is homotopic to the identity. Since $t(\tau_1) \subseteq \tau_2$, $(t \circ g^{-1})(g(\tau_1)) = t(\tau_1) \subseteq \tau_2$. Let $p \in g(\tau_1)$. Since $g$ is a diffeomorphism, the restriction of the differential 
    $$d_p g^{-1}_{|g(\tau_1)}\colon T_p g(\tau_1) \to T_{g^{-1}(p)} \tau_1$$
    is an isomorphism. Since $t$ is a train track map, $d_{g^{-1}(p)} t_{| \tau_1}$ is an isomorphism, so the composition
    $$d_p (t \circ g^{-1})_{|g(\tau_1)} = (d_{g^{-1}(p)} t_{| \tau_1}) \circ (d_p g^{-1}_{|g(\tau_1)})\colon T_p g(\tau_1) \to T_{(t \circ g^{-1})(p)} \tau_2$$
    is also an isomorphism.
\end{proof}

\par
A \textit{\textbf{half-branch}} of a branch $b$ in $\tau$ is a component of $b \setminus \{x\}$ for a point $x$ in the interior of $b$ (so a half branch is incident to one switch). For a generic track, we can fix an orientation on the tangent space at a switch $v$ so that 
a single half-branch is incident on the positively oriented side and two half-branches are incident on the negatively oriented side.
The two half-branches on the negatively oriented side are called \textit{\textbf{small}}, and the one on the other side is called \textit{\textbf{large}}. If both components of $b \setminus \{x\}$ are large (resp. small), we also say $b$ is large (resp. small). If $b$ is neither large nor small, we say it is \textit{\textbf{mixed}}.

\par
A \textit{\textbf{patch}} of $S \setminus \tau$ (given $\tau$ on $S$) is a completion of a component of $S \setminus \tau$ under the induced path metric 
The non-smooth points of the boundary of a patch are called \textit{\textbf{cusps}}. A patch has a natural map from $P$ to $S$ induced by the embedding of its interior into $S$, we will sometimes conflate patches and their images in $S$. The boundary of a patch consists of train paths between its cusps, called the \textit{\textbf{sides}} of the patch.

Let $P$ be a patch of $S \setminus \tau$, $U$ a connected subset of $S$ intersecting $P$, $\overset{\circ}{P}$ the interior of $P$ as a subset of $S$. A \textit{\textbf{piece of}} $P$ \textit{in} $U$ (or just a \textit{\textbf{piece of a patch}}) is a component of the completion of $U \cap \overset{\circ}{P}$ under the induced path metric for any such $P$ and $U$.

Note that when $\tau$ is an invariant train track for a pseudo-Anosov map, $\tau$ is \textit{\textbf{filling}}, i.e. all the patches of $S \setminus \tau$ are $n$-gons with at most one puncture, as otherwise a patch would contain an essential closed curve fixed by a power of $f$. If $\tau$ is a veering track, patches of $S^\circ \setminus \tau$ all have exactly one puncture because $\tau$ is the spine of an ideal triangulation of $S^\circ$, so patches are dual to ideal vertices (see \Cref{triang}).

\begin{lemma}\label{thm:patches}[Patches to patches, cusps to cusps]
    Let $\tau \subseteq S^\circ$ be a veering train track for $f$, so $f(\tau) \prec_\sigma \tau$ where $\sigma$ is a composition of folds, and let $P$ be a patch of $S^\circ \setminus \tau$. 
    The interior of the image of $P$ under $\sigma \circ f$ is the interior of another patch $Q$ of $S^\circ \setminus \tau$ with the same number of cusps, and $\sigma \circ f$ sends cusps of $P$ to cusps of $Q$. The image of $\partial P$ consists of $\partial Q$ and train paths $q_i$ where $q_i(0)$ is the branch at the $i$-th cusp of $Q$.    
\end{lemma}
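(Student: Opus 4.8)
The plan is to establish the statement for a single fold and then propagate it along a folding sequence for $\tau$. Since $f$ is a diffeomorphism of $S^\circ$ (all singularities of the foliations lie at the punctures), it carries each patch of $S^\circ\setminus\tau$ homeomorphically onto a patch of $S^\circ\setminus f(\tau)$, preserving cusps and their number. By hypothesis $\sigma=\phi_n\circ\cdots\circ\phi_1$ where $\tau_{j-1}\prec_{\phi_j}\tau_j$ is a single fold and $f(\tau)=\tau_0\leftharpoonup\tau_1\leftharpoonup\cdots\leftharpoonup\tau_n=\tau$, so that $\sigma\circ f=\phi_n\circ\cdots\circ\phi_1\circ f$. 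It therefore suffices to analyse one fold and then compose.

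For a single fold $\phi$ carrying $\tau'$ to $\tau''$ I would argue directly from \Cref{fig:fold}. Outside the connected subgraph on which $\phi$ is nontrivial, $\phi$ is a homeomorphism, so any patch of $S^\circ\setminus\tau'$ disjoint from that subgraph is carried homeomorphically onto a patch of $S^\circ\setminus\tau''$ with the same cusps. For the finitely many patches meeting the fold, one reads off from the figure that: (i) a fold neither creates nor destroys a cusp, it only slides the cusp at which the fold is performed from its old switch to the new switch, so every patch of $S^\circ\setminus\tau''$ has the same number of cusps as the patch of $S^\circ\setminus\tau'$ it comes from; (ii) $\phi$ is combinatorial, hence takes switches to switches and therefore cusps (which sit at switches) to cusps; (iii) the image of a patch $P'$ is the image of a patch $P''$ of $S^\circ\setminus\tau''$ together with a one-dimensional arc that $\phi$ sweeps over the small branch $e'$ and the branches folded with it, so the interior of the image of $P'$ is the interior of $P''$; and (iv) that extra arc, together with the image of $\partial P'$, is a union of train paths, each beginning with the branch incident to a cusp of $P''$ on its large side, precisely because the branches folding to $e'$ are the ones incident to the zipped cusp.

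I would then run the induction. Set $h_j=\phi_j\circ\cdots\circ\phi_1\circ f$, a train track map $\tau\to\tau_j$ taking switches to switches, and prove by induction on $j$ that the image of $P$ under $h_j$ is the image of a patch $P_j$ of $S^\circ\setminus\tau_j$ with the same number of cusps as $P$, that $h_j$ sends cusps of $P$ to cusps of $P_j$, and that $h_j(\partial P)=\partial P_j\cup\bigcup_i q^j_i$ with each $q^j_i$ a train path whose initial branch is the branch at the $i$-th cusp of $P_j$. The base case $j=0$ is the diffeomorphism $f$, with no extra train paths. For the inductive step one applies the single-fold analysis to $P_{j-1}$, producing $P_j$ and at most one new arc at the newly created cusp, and pushes each previous $q^{j-1}_i$ forward by $\phi_j$: it maps to a train path, and since $\phi_j$ sends the branch at a cusp of $P_{j-1}$ to a train path starting with the branch at the corresponding cusp of $P_j$, the images are again train paths of the required type. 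Taking $j=n$ and $Q=P_n$ gives the lemma.

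The step I expect to be the main obstacle is part (iv) together with the tendril bookkeeping under composition: one must check that the overshoot of $\phi$ past $\partial P''$ is exactly a union of train paths issuing from cusps, never from an interior point of a branch, and that when several such arcs accumulate at one cusp over the course of the sequence they assemble into a single immersed train path rather than a branching configuration. Verifying this amounts to a careful reading of \Cref{fig:fold} in each local case and of how the new switch of a fold sits relative to the branches folded to $e'$; the remainder is routine.
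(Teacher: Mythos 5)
Your overall plan---treat $f$ as a diffeomorphism of $S^\circ$, analyze one fold locally via \Cref{fig:fold}, and induct along the folding sequence while tracking the overshoot paths---is essentially the paper's argument; the paper's proof is simply a terser version of it (folds preserve the cusp structure of patches, $f$ does too, and the extra train paths arise when the carrying map identifies the two sides of a patch at a cusp). Your point (iv) correctly locates the mechanism: the overshoot is the image of the folded small branch together with the identified sides at the zipped cusp, and it leaves the cusp of the image patch through the branch on its large side.

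Two of your local justifications, however, are not right as stated, and they sit exactly at the step you flag as the main obstacle. First, (ii) is a non sequitur: a combinatorial map takes switches to switches, but that does not make it take cusps to cusps. Indeed, when the fold is performed at the cusp of $P'$, the carrying map sends that cusp point to the \emph{far} endpoint of the overshoot path, while the corresponding cusp of $P''$ sits at the path's starting end; so the ``cusps to cusps'' clause is really the statement that cusps of $P'$ and $P''$ correspond, with the tendril $q_i$ joining the $i$-th cusp of $P''$ to the image of the $i$-th cusp of $P'$ (this identification phenomenon is precisely what the paper's proof records). Second, in your inductive step it is not true that $\phi_j$ sends the branch at a cusp of $P_{j-1}$ to a train path beginning with the branch at the corresponding cusp of $P_j$: when $\phi_j$ zips that cusp, the image of the old tendril begins strictly beyond the new cusp, and the newly identified segment (the image of the folded small branch) must be prepended; one then checks the concatenation is smooth at the junction, which holds because that segment enters the junction switch through its large half-branch. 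With these two points repaired your induction closes and recovers the statement, in line with the paper's (much shorter) proof.
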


\begin{proof}
    Folds preserve the topology (i.e. the cusp structure and number of punctures) of patches, so $\sigma$ preserves the topology of patches. As $f$ is a diffeomorphism away from the singularities of its invariant foliations, it also preserves the topology of patches. It often happens that $\sigma \circ f$ identifies pieces of $\partial P$ on opposite sides of a cusp, in which case these pieces map to a train path starting at the cusp and exiting out the small branch incident to the switch at the cusp.
\end{proof}

For a map $g$ and a train track $\tau$ such that $g(\tau) \prec_\sigma \tau$, if patches $P$ and $Q$ of $S \setminus \tau$ are such that the interior of $(\sigma \circ g)(P)$ is the interior of $Q$, we say $P$ \textit{\textbf{is carried to}} $Q$.

\par Let $s$ be a switch of a train track $\tau$, and let $U$ be a disk neighborhood of $s$ so that $\tau \cap U$ is contractible. We say two half branches $b$, $b'$ incident to $s$ are \textit{\textbf{adjacent}} if there is a patch $P$ so that $b$ and $b'$ are both contained in the same component of $P \cap U$. The condition that train tracks have well-defined tangent spaces at switches is equivalent to putting a taut angle structure around each switch, i.e. requiring the angles between adjacent branches to be either $0$ or $\pi$, so that the sum of the angles around a switch is $2 \pi$, as in \Cref{fig:ttanglestruct}. It will be convenient to construct train tracks by defining the smoothing in this way. This formalism is analogous to the case of taut ideal triangulations one dimension up, where angle structures are used by Lackenby to create branched 2-manifolds \cite{L}.

\begin{figure}[h]
    \centering
    \includegraphics[width=0.66\textwidth]{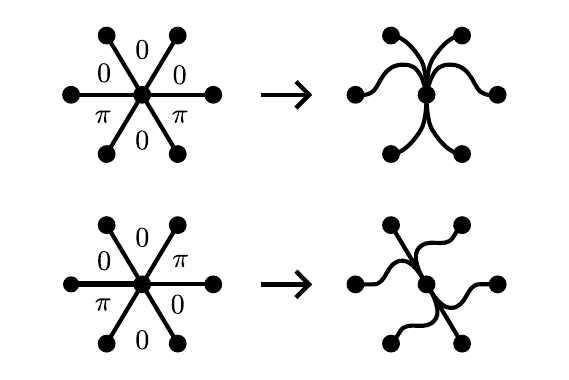}
    \caption{A train track can be obtained from a finite graph embedded in $S$ by imposing a taut angle structure around each vertex, which is equivalent to having a well-defined tangent space at each vertex.}
    \label{fig:ttanglestruct}
\end{figure}

\subsection{Triangulations}\label{triang}
Here we recall some background on triangulations of 3-manifolds which will be important for the construction of the veering triangulation of the mapping torus of $f$ and its relation to train track folding sequences. For additional details see \cite{P} or \cite{L}.

\par Let $\Delta$ be an ideal triangulation of a cusped 3-manifold $M$, in other words a simplicial complex with the vertices deleted which is homeomorphic to $M$.

\par
    $\Delta$ is called \textit{\textbf{taut}} if it admits an angle structure such that each dihedral angle is either $0$ or $\pi$,  around each edge exactly two dihedral angles are $\pi$, and these angles are not adjacent. This gives $\Delta^{(2)}$ the structure of a \textit{\textbf{branched surface}}. 
\par
    Given a surface $S$ and a branched surface $B$ both embedded in $M$,
    $B$ \textit{\textbf{carries}} $S$ if there is a map $\sigma\colon M \to M$ homotopic to the identity such that $h(S) \subseteq B$, and the restriction of the differential of $h$ to $S$ at a point is an isomorphism of tangent spaces. In this case, $\sigma$ is called the \textit{\textbf{carrying map}}.

\par
    The \textit{\textbf{spine}} of $\Delta$ is the dual cell complex $\Delta^*$ which has 
    a 2-cell for every edge of $\Delta$, with edges of $\Delta^*$ identified according to incidences of triangles in $\Delta$. The spine $\Delta^*$ has no 3-cells because $\Delta$ has no material vertices.
    
    Given a cusped surface $S$, an \textit{\textbf{ideal surface triangulation}} of $S$ is a simplicial 2-complex with the vertices deleted $\delta$ homeomorphic to $S^\circ$. If $\delta$ is an ideal triangulation of $S^\circ$, we also associate to $\delta$ a choice of homeomorphism $\iota: S^\circ \to \delta$ called the \textit{\textbf{marking}}.
    Given an ideal surface triangulation $\delta$, the dual complex $\delta^*$ is defined similarly to the case one dimension up, with edges corresponding to edges of $\delta$, vertices corresponding to triangles, and edges identified according to incidence of triangles.
    The 1-complex $\delta^*$ is called the \textit{\textbf{spine}} of $\delta$. $\delta_*$ can be smoothed to a train track by assigning an angle structure at each vertex (provided that the angle structure does not produce a complementary region which is a punctured disk). Conversely, if a generic filling train track $\tau$ on $S$ has exactly one puncture in every patch, $\tau$ is dual to an ideal triangulation of $S$ in the sense that there is an ideal triangulation $\delta$ and a marking $\iota:S \to \tau$ such that $\iota(\tau)$ is the spine of $\delta$.
    \par A \textit{\textbf{flip}} in $\delta$ is the combinatorial move in adjacent 2-cells of $\delta$ replacing their common edge with a new edge between the vertices not adjacent to the common edge. If two generic filling train tracks differ by a fold, their dual triangulations differ by a flip in the edge dual the small branch being folded, as in \Cref{fig:trackdualtriang}. 
    
    \begin{figure}[h]
    \centering
    \includegraphics[width=0.66\textwidth]{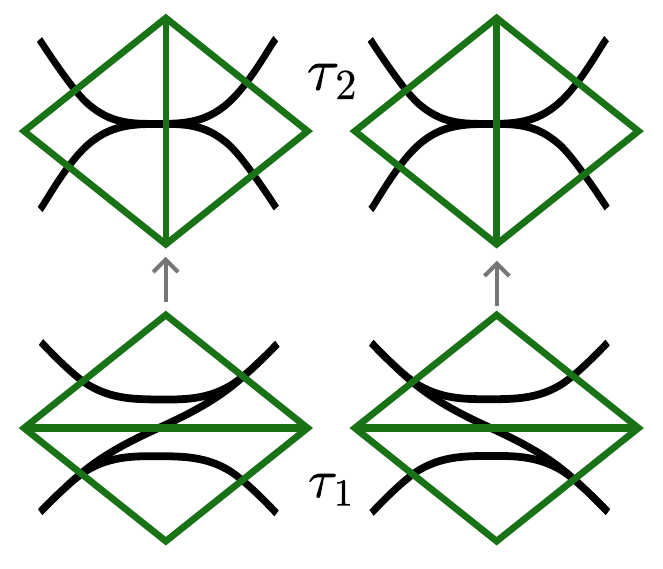}
    \caption{A local picture of the duality between generic filling train tracks and triangulations of $S$. If $\tau_1$ folds to $\tau_2$, the dual triangulations differ by a flip.}
    \label{fig:trackdualtriang}
    \end{figure}

    \par Once and for all, let $S$ be a surface of negative Euler characteristic without boundary, and $f\colon S \to S$ a pseudo-Anosov homeomorphism with unstable foliation $\Lambda^u$. Let $\{v_i\}$ be the singular points of $\Lambda^u$ and $\Lambda^s$, and define $S^\circ = S \setminus \{ v_i \}$. We will also use $f$ to refer to its restriction to $S^\circ$. Let $\tau$ be a veering train track for $f$, so $f(\tau) \prec_\sigma \tau$ with carrying map $\sigma$ consisting of a composition of folds $\sigma = \phi_{n}\circ\ldots\circ \phi_2 \circ\phi_1$.
\section{Dynamics of veering train tracks}\label{dvts}
\subsection{Constructing layered veering triangulations}\label{clvts}
Following \cite{A} and \cite{AT}, we will now use the veering track $\tau$ to construct the \textit{(layered) veering triangulation} of $M_f$.
\par To begin, there is an ideal surface triangulation $\delta_0$ and a marking $\iota_0\colon S^\circ \to \delta_0$ such that $\tau_0 = \iota_0(f(\tau))$ is the spine of $\delta_0$.
Let $b_0$ be the small branch of $f(\tau)$ that is folded by $\phi_1$, and let $e_0$ be the edge of $\delta_0$ dual to $\iota_0(b_0)$.
Attach an ideal tetrahedron $T_1$ to $\delta_0$ by gluing two of its faces to the two faces of $\delta_0$ adjacent to $e_0$, forming a new simplicial complex $\Delta_1$ as in \Cref{fig:gluedelta0}.
Put a taut structure on $\Delta_1$ by assigning dihedral angle $\pi$ to $e_0$ and the edge of $T_1$ not in $\delta_0$, and angle $0$ to the remaining four edges in $T_1$, and coorient $T_1^{(2)}$ so that $e_0$ is on the bottom and the edge of $T_1$ not in $\delta_0$ is on the top.
Then there is a second triangulation $\delta_1$ on the top of $\Delta_1$ that differs from $\delta_0$ by a flip, and a marking $\iota_1 \colon S^\circ \to \delta_1$ so that $\iota_1(\phi_1(f(\tau)))$ is the spine of $\delta_1$ and $\iota_1$ agrees with $\iota_0$ on $\iota_0^{-1}(\delta_0 \cap \delta_1)$.

\par Continue inductively, defining a simplicial complex $\Delta_k$ by attaching a tetrahdron $T_{k}$ to $\Delta_{k-1}$ by gluing two faces of $T_{k}$ to the two faces of $\Delta_{k-1}$ adjacent to the edge $e_{k-1}$ dual to the branch of $\tau_{k-1} = \iota_{k-1}(\phi_{k-1}\circ\phi_{k-2}\circ\ldots\circ\phi_1(f(\tau))$ which is the image of the small branch of $\tau$ folded by $\phi_{k-1}$. 
Assign a taut structure to $\Delta_k$ as above, assigning dihedral angle $\pi$ to $e_{k-1}$ and the single new edge in $\Delta_k \setminus \Delta_{k-1}$, and angle $0$ to the remaining interior angles of $T_{k}$, and coorienting so $\delta_{k-1}$ is on the bottom of $T_{k}$ and the faces of $\Delta_k \setminus \Delta_{k-1}$ are on the top.
Then there is a new surface triangulation $\delta_k$ on the top of $\Delta_k$ and a marking $\iota_k\colon S^\circ \to \delta_k$ so that $\tau_{k} = \iota_{k}(\phi_{k}\circ\phi_{k-1}\circ\ldots\circ\phi_1(f(\tau))$ is the spine of $\delta_k$ and $\iota_k$ agrees with $\iota_{k-1}$ on $\iota_{k-1}^{-1}(\delta_{k-1} \cap \delta_k)$.

\par The \textit{\textbf{cut open veering triangulation}} is the simplicial complex $\Delta \ca$ $(=\Delta_n)$ obtained at the end of this process. 

\par $\Delta\ca$ is almost a triangulation of $S^\circ \times [0,1]$, except that if a branch $b$ of $f(\tau)$ is never folded, there aren't any tetrahedra above the edge of $\delta_0$ dual to $\iota_0(b)$.
Since $f(\tau)$ and $\tau$ differ by a homeomorphism and the $\delta_i$ are dual to the $\tau_i$, the triangulations $\delta_0$ and $\delta_n$ on the bottom and top of $\Delta\ca$ are isomorphic.
By construction, the homeomorphism $\iota_0 \circ f \circ \iota_n^{-1}\colon \delta_n \to \delta_0$ restricts to a homeomorphism from $\tau_n$ to $\tau_0$. We wish to promote this map to a simplicial map while still sending $\tau_n$ to $\tau_0$.
To do this, observe that given an edge $e$ of $\delta_n$, $\iota_0^{-1}(e)$ is an essential arc in $S^\circ$ dual to a branch $b$ of $\tau$, and $f(b)$ is also dual the essential arc $\iota_n^{-1}(b')$ for some edge $e'$ of $\delta_0$, so $\iota_0^{-1}(e)$ and $f^{-1} \circ \iota_n^{-1}(e')$ are homotopic.
The arc $\iota_n^{-1}(e)$ cannot intersect any edge of $f^{-1} \circ \iota_0^{-1}(\delta_0)$ other than $f^{-1} \circ \iota_0^{-1}(e')$ because otherwise $\tau$ could not be dual to both $f^{-1} \circ \iota_0^{-1}(\delta_0)$ and $\iota_n^{-1}(\delta_n)$. Hence, there is an open neighborhood of 
$f^{-1} \circ \iota_0^{-1}(b') \cup \iota_n^{-1}(b)$
which is disjoint from $\tau \setminus e$ and the rest of $f^{-1} \circ \iota_0^{-1}(\delta_0)$ and $\iota_n^{-1}(\delta_n)$. Thus $f^{-1} \circ \iota_0^{-1}(b')$ and $\iota_n^{-1}(b)$ are homotopic through an ambient isotopy which is constant on the complement of this disk and fixes $b$ setwise. Hence, by composing $\iota_n$ with this isotopy we may assume that $\iota_n^{-1}(b) = f^{-1} \circ \iota_0^{-1}(b')$, or equivalently that $\iota_0 \circ f \circ \iota_n^{-1}(b) = b'$. By doing this for each edge of $\delta_n$, we may assume that $\iota_0 \circ f \circ \iota_n^{-1}$ is a simplicial map sending $\tau_n$ to $\tau_0$.

\begin{figure}[h]
    \centering
    \includegraphics[width=0.4\textwidth]{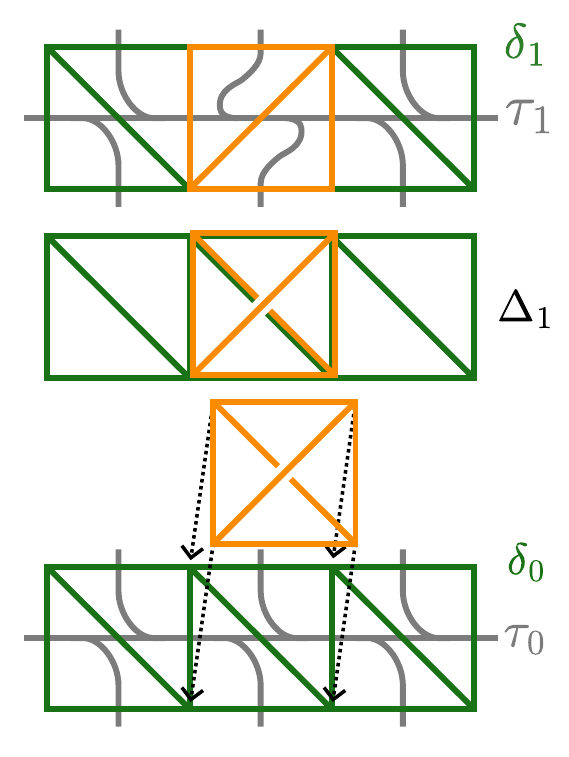}
    \caption{The fold from $\tau_0$ to $\tau_1$ is realized by placing a tetrahedron on $\delta_0$ above the branches being folded, so that the surface triangulation $\delta_0$ dual to $\tau_0$ is on the bottom of the resulting simplicial complex $\Delta_1$ and the surface triangulation $\delta_1$ dual to $\tau_1$ is on the top.}
    \label{fig:gluedelta0}
\end{figure}
\par The \textit{\textbf{veering triangulation}} $\Delta$ of $M_f$ is then defined by 
quotienting $\Delta \ca$ by this map:
$$\Delta = \faktor{\Delta\ca}{p \sim (\iota_0 \circ f \circ \iota_n^{-1})(p) \:\forall p \in \delta_n}.$$
$\Delta$ inherits the taut structure and dualities of $\Delta\ca$.

\par The edges of $\Delta$ admit a 2-coloring as follows. Observe that every tetrahedron in $\Delta$ has exactly one bottom edge. Conversely, every edge in $\Delta$ is the bottom edge of exactly one tetrahedron, and passing from the bottom faces to the top faces in this tetrahedron affects a fold in the dual train tracks from the bottom to the top of the tetrahedron. Color this bottom edge red if the fold is a right fold and blue if it is a left fold as in \Cref{fig:veeringcolor}. See e.g. \cite[Definition 2.2]{AT}. Since every edge is the bottom edge of exactly one tetrahedron, this determines the coloring of every edge. In each tetrahedron, opposite side edges are the same color and adjacent side edges are the opposite color as in \Cref{fig:veeringtet}.

\begin{figure}[h]
    \centering
    \includegraphics[width=0.66\textwidth]{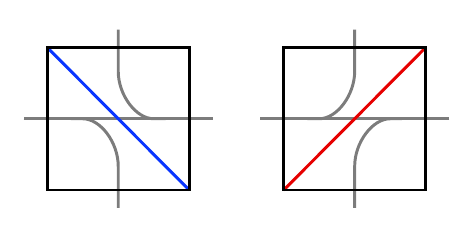}
    \caption{The bottom edge of a tetrahedron is colored according to whether the tetrahedron is affecting a left fold or a right fold.}
    \label{fig:veeringcolor}
\end{figure}

\par Every switch of a train track in $\Delta$ is dual to a triangle, which is a bottom face of exactly one tetrahedron, bounded by one bottom edge and two side edges. 
As seen in \Cref{fig:veeringcolor}, the branch dual to the bottom edge is on the right side of the cusp (resp. left side) at this switch if and only if the bottom edge of the triangle dual to the switch is colored blue (resp. red). 
Since every edge of $\Delta$ is the bottom edge of exactly one tetrahedron, an equivalent characterization of the coloring is that 
edges dual to branches on the right (resp. left) side of a cusp of a train track are colored blue (resp. red).

\begin{figure}[h]
    \centering
    \includegraphics[width=0.75\textwidth]{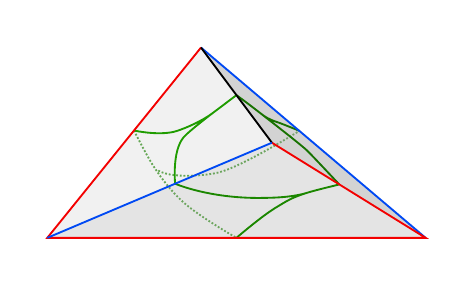}
    \caption{The coloring of a single veering tetrahedron and the train tracks on the top and bottom. The coloring on the bottom edge depends whether the fold affected by the tetrahedron is a right fold or a left fold, and the coloring of the top edge is determined by the fold in the tetrahedron where this edge is on the bottom.}
    \label{fig:veeringtet}
\end{figure}

\par Let $q \colon \Delta\ca \to \Delta$ be the quotient map. 
We can define a carrying map $\xi \colon S^\circ \to \Delta^{(2)}$ by $\xi = (q \circ \iota_n)$.
Note that $\xi$ is not necessarily an embedding because if an edge $e$ of $\delta_0$ has no tetrahedra above it in $\Delta\ca$, it is also in $\delta_n$, so it will be identified with $\iota_0 \circ f \circ \iota_n^{-1}(e)$.
\par The train track $q(\tau_n)$ on $\Delta^{(2)}$ pulls back under $\xi$ to the veering train track $\tau$ for $f$ that we started with. 
In addition, $\Delta\ca$ admits a coloring by pulling back the coloring of $\Delta$.
This carrying map $\xi$ will continue to be important for pulling back structures in $\Delta$ to $S^\circ$.

\subsection{The flow graph and the transition matrix}\label{flowgraph}

Following \cite{AT} and Landry--Minsky--Taylor \cite[section 2]{LMT1}, we will now construct a directed graph embedded in $\Delta\ca$ by an inductive process analogous to the construction of $\Delta\ca$ which records which branches are folded. 
Let $\Phi_0$ be the graph embedded in $\delta_0$ which has a vertex at each point of intersection between $\delta_0$ and $\tau_0$ and no edges. Let $\Phi_1$ be the graph obtained from $\Phi_0$ by adding a single vertex at the point of intersection between $\delta_1$ and the branch of $\tau_1$ which has three branches of $\tau_0$ folded onto it, and an edge from each of the three vertices on these branches of $\tau_0$ to the new vertex. Continue inductively, defining $\Phi_{i+1}$ by adding a single new vertex to $\Phi_i$ at the point of intersection between the branch of $\tau_{i+1}$ which has three branches of $\tau_i$ folded onto it and edges from these three branches to the new vertex as in \Cref{fig:cofg1}. Observe that in terms of the coloring of the edges of the triangulation, this means that in each tetrahedron there are edges from the side edges of opposite color to the bottom edge to the top edge, and an edge from the bottom edge to the top edge.
\par Define the \textit{\textbf{cut open flow graph}} to be the graph $\Phi\ca$ obtained at the end of this process. Define the \textit{\textbf{flow graph}} $\Phi$ to be the quotient $q(\Phi \ca)$ of $\Phi \ca$ in $\Delta$.

\begin{figure}[h]
    \centering
    \subfigure[]{\includegraphics[width=0.6\textwidth]{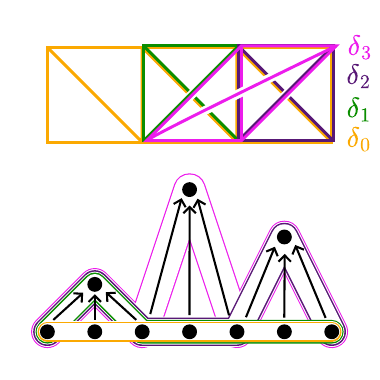}}
    \subfigure[]{\includegraphics[width=0.39\textwidth]{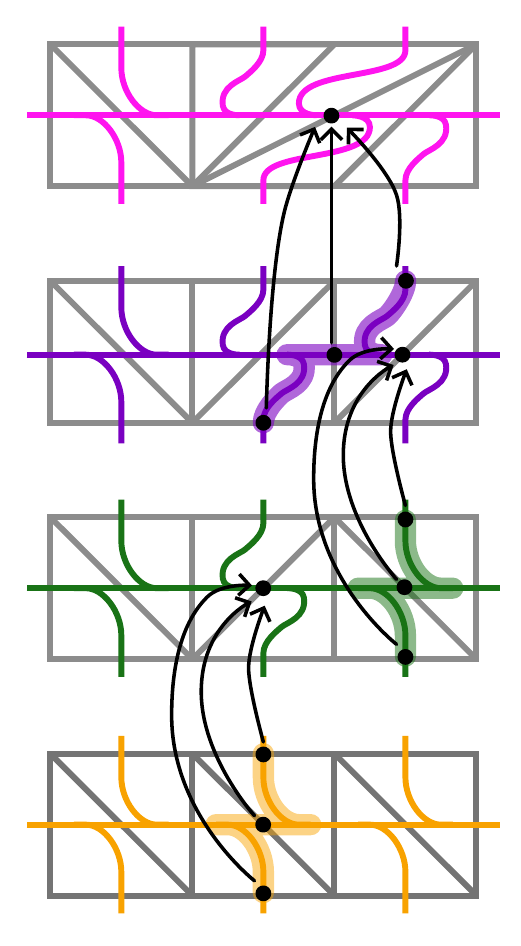}}
    \caption{(a) An example of a piece of the cut open flow graph $\Phi_n$ as an abstract directed graph, and a top down view of $\Delta_3$. (b) An exploded view of the edges between each $\delta_i$ and $\delta_{i+1}$. Edges of $\Phi_n$ go from vertices on the branches of $\tau_i$ being folded to the vertex on the branch of $\tau_{i+1}$ which is in the image of all three. Hence, a path in $\Phi_n$ tracks a piece of the image of a branch of $f(\tau)$ under the folding sequence $\sigma$.}
    \label{fig:cofg1}
\end{figure}

\par Finally, we want to relate the transition matrix for $f(\tau) \prec_\sigma \tau$ to paths in the cut open flow graph. Let $b$ be a branch of $\tau$ and let $(b_1,\ldots,b_m)$ be the train path constituting $\sigma(f(b))$ (so note that we might have $b_j = b_k$ for $j \ne k$). In $\Delta \ca$, the branch $f(b)$ of the train track $f(\tau)$ corresponds to the branch $\iota_0(b)$ in $\tau_0$. The train path $(b_1 \ldots b_m)$ in $\tau$ corresponds to $(\iota_n(b_1)\ldots \iota_n(b_m))$ in $\tau_n$.

\begin{lemma}\label{fgpaths1}
    For a branch $b$ of $\tau$ with
    $(\sigma \circ f)(b) = (b_1,\ldots,b_m)$
    allowing $b_j = b_k$ as above
    there is exactly one path in the cut open flow graph (including paths of length zero) from the vertex corresponding to $\iota_0(b)$ to the vertex corresponding to $\iota_n(b_j)$ and no paths from $\iota_0(b)$ to $\delta_n$ other than these.
\end{lemma}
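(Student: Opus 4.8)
The plan is to first observe that the statement is essentially combinatorial and does not use the $f$-invariance of $\tau$. I would replace $\iota_0(b)$ by an arbitrary branch $\beta$ of $\tau_0$ and $(\sigma\circ f)(b)$ by the train path $(\beta_1,\ldots,\beta_m)$ in $\tau_n$ obtained by pushing $\beta$ through the fold sequence $\phi_n\circ\cdots\circ\phi_1$, and prove that the assignment $j\mapsto P_j$ is a bijection from $\{1,\ldots,m\}$ onto the set of directed paths in $\Phi\ca$ that start at the vertex dual to $\beta$ and end at a vertex dual to an edge of $\delta_n$, where $P_j$ denotes the path ending at the vertex dual to $\beta_j$. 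Taking $\beta=\iota_0(b)$ then gives the lemma, since $(\beta_1,\ldots,\beta_m)=(\iota_n(b_1),\ldots,\iota_n(b_m))$ in that case. Throughout I would use the bookkeeping that the vertices of $\Phi\ca$ are in bijection with the edges of $\Delta\ca$, each lying on the edge it is dual to, and that such a vertex is dual to an edge of $\delta_n$ precisely when that edge is never flipped on the way to the top of $\Delta\ca$, i.e. is not the bottom edge of any tetrahedron of $\Delta\ca$.

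The main step, and the one I expect to be the real obstacle, is to pin down the effect of a single fold on train paths and match it to the flow-graph construction. Write $c_k$ for the branch of $\tau_k$ onto which three branches of $\tau_{k-1}$ are folded, namely the branch dual to the new edge of $T_k$, on which the new vertex $w_k$ of $\Phi_k$ lies; the three branches folded onto $c_k$ are the small branch of $\tau_{k-1}$ folded by $\phi_k$ (dual to the edge the flip removes) and two side branches of $\tau_{k-1}$ (dual to side edges of $T_k$ of the relevant colour, which survive the flip), and these three are exactly the sources of the three edges of $\Phi_k$ into $w_k$. Inspecting \Cref{fig:fold} together with the colour conventions, I would check that $\phi_k$ acts on branches as follows: a branch whose dual edge survives the flip is sent to the branch of $\tau_k$ dual to the same edge; the small branch dual to the removed edge is sent to $c_k$; and each of the two side branches $r$ folded onto $c_k$ is sent to the length-two train path consisting of $c_k$ and the surviving branch dual to $r$. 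Thus applying $\phi_k$ to a train path inserts exactly one copy of $c_k$ at each entry that is one of the three branches folded onto $c_k$ and leaves everything else unchanged, and the strand running over this new $c_k$ is joined to the strand it came from by precisely one of the three flow-graph edges into $w_k$. Identifying this local picture of a fold with the flow-graph step is where essentially all the content sits; everything afterward is organized tracking.

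With this in hand I would construct the bijection by following positions of the final train path backwards through the folds. Writing $(\phi_k\circ\cdots\circ\phi_1)(\beta)=(\gamma^{(k)}_1,\ldots,\gamma^{(k)}_{l_k})$ for $0\le k\le n$, so that $l_0=1$, $\gamma^{(0)}_1=\beta$ and $(\gamma^{(n)})=(\beta_1,\ldots,\beta_m)$, the fact that $\phi_{k+1}$ carries this path to the concatenation of the train paths $\phi_{k+1}(\gamma^{(k)}_i)$ gives monotone surjections $\pi_k\colon\{1,\ldots,l_{k+1}\}\to\{1,\ldots,l_k\}$; a position $j$ at time $n$ then has a well-defined history $j=j_n\mapsto\pi_{n-1}(j_n)\mapsto\cdots\mapsto j_0=1$, and by the single-fold picture the successive branches $\gamma^{(k)}_{j_k}$ and $\gamma^{(k+1)}_{j_{k+1}}$ are either the same edge of $\Delta\ca$, hence the same vertex of $\Phi\ca$, or are joined by a flow-graph edge into $w_{k+1}$. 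Deleting repeats in the resulting sequence of vertices then yields a directed path $P_j$ in $\Phi\ca$ from the vertex dual to $\beta$ to the vertex dual to $\gamma^{(n)}_j=\beta_j$, which is a $\delta_n$-vertex.

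Finally I would verify that $j\mapsto P_j$ is a bijection. Injectivity: if the histories of $j\ne j'$ first diverge at step $k$, then at time $k-1$ they occupy the same position, whose branch must be a side branch folded onto $c_k$ (otherwise $\phi_k$ would not separate them), and one history then records the surviving-branch strand and the other the new $c_k$ strand, so $P_j$ and $P_{j'}$ leave the common vertex along different edges. Surjectivity: given a path $v_0\to\cdots\to v_r$ from $\beta$'s vertex to a $\delta_n$-vertex, its edges occur at strictly increasing steps $k_0<\cdots<k_{r-1}$ with $v_{a+1}=w_{k_a}$; since the source of a flow-graph edge into $w_{k_a}$ is a branch of $\tau_{k_a-1}$, the edge dual to $v_a$ lies in $\delta_{k_a-1}$, and, having been created at step $k_{a-1}$, it survives unflipped throughout $[k_{a-1},k_a-1]$ (with $v_0$'s edge in $\delta_0$ and $v_r$'s in $\delta_n$); running the single-fold picture forward along the dual branches of $v_0,\ldots,v_r$ then exhibits a position $j$ of $(\beta_1,\ldots,\beta_m)$ whose history-path is exactly the given path. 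The length-zero path at $\beta$'s vertex, which occurs exactly when $\beta$'s dual edge survives to $\delta_n$, corresponds to the unique strand that is never folded. The heart of the argument is the second paragraph; the bijection in the last two is routine once that local dictionary between folds and flow-graph edges is nailed down.
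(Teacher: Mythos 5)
Your proposal is correct and rests on exactly the same key step as the paper: the case analysis of how a single fold acts on the carried train path (unfolded branches persist, the branch dual to the flipped bottom edge maps to the single new branch, each side branch maps to itself followed by the new branch), matched with the three flow-graph edges into the new vertex. The paper simply packages this as an induction on the number of folds, maintaining at each stage a bijection between entries of the current image path and paths of $\Phi\ca$ into $\delta_k$, whereas you assemble the same fold-by-fold tracking into an explicit position-to-path bijection verified by injectivity and surjectivity; the content is the same.
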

\begin{proof}
    \par We induct on the length of the folding sequence $\phi_k \circ \ldots \circ \phi_1$. For the base case $k = 0$, $b$ is not folded, there is a path of length zero from $\iota_0(b)$ to itself and no other paths from $\iota_0(b)$ to $\delta_0$.

    \par Suppose $(\phi_k \circ \ldots \phi_1 \circ f)(b) = (b_1,\ldots, b_m)$, there is one path in $\Phi \ca$ from (the vertex corresponding to) $\iota_0(b)$ to (the vertex corresponding to) $\iota_k(b_j)$ and no paths from $\iota_0(b)$ to $\delta_k$ other than these.
    If $b_j$ is not one of the three branches folded by $\phi_{k+1}$ then $b_j$ is also in $\phi_{k+1} \circ \phi_k \circ \ldots \circ \phi_1(b)$ and the edge dual to $\iota_k(b_j)$ is in both $\delta_k$ and $\delta_{k+1}$, so the path from $\iota_0(b)$ to $\iota_k(b_j)$ is also a path from $\iota_0(b)$ to $\delta_{k+1}$ and there are no other paths from $\iota_0(b)$ to $\delta_{k+1}$.
    
    \par If the branch $b_j$ of $(\phi_k \circ \ldots \circ \phi_1 \circ f)(\tau)$ is the large branch folded by $\phi_{k+1}$, it is dual to the bottom edge of the unique tetrahedron $T_{k+1}$ in $\Delta_{k+1}$ which is not in $\Delta_k$, $\phi_{k+1}(b_j)$ consists of the single branch $\iota_{k+1}(\phi_{k+1}(b_j))$ which is dual to the top edge of $T_{k+1}$, and there is an edge of $\Phi \ca$ from the bottom edge of $T_k$ to the top edge. Hence, there is a path in $\Phi \ca$ from $\iota_0(b)$ to $\iota_{k+1}(\phi_{k+1}(b_j))$, 
    and there are no other paths from $\iota_k(b_j)$ to $\delta_{k+1}$.
    \par If $b_j$ is one of the other branches folded by $\phi_{k+1}$, it is dual to a side edge of $T_{k+1}$, and $\iota_{k+1}(\phi_{k+1}(b_j))$ consists of the two branches of $\iota_{k+1}(\phi_{k+1} \circ \ldots \phi_1(f(\tau)))$ which are dual to the top edge of $T_{k+1}$ and the side edge dual to $b_j$. There is a path of length zero from the side edge dual to $b_j$ in $\delta_k$ to itself in $\delta_{k+1}$, an edge from this side edge to the top edge, and no other paths from $\iota_k(b_j)$ to $\delta_{k+1}$.
\end{proof}

In $\Delta$, a branch $b$ of $\delta_n$ is identified with the branch $\iota_0 \circ f^{-1} \circ \iota_n^{-1}(b)$ of $\delta_0$. This corresponds to the identification of the branch $\iota^{-1}_n(b)$ in $\tau$ with its image $f(\iota^{-1}_n(b))=\iota_0^{-1}(b)$ in $f(\tau)$. The transition matrix $M$ for $f(\tau) \prec_\sigma \tau$ is defined by indexing the branches of $\tau$ (as opposed to indexing the branches of $f(\tau)$), so to characterize $M$ in terms of paths in $\Phi \ca$ we need to convert the $\delta_0$-indexed branches back to $\delta_n$-indices using the change-of-index $b \mapsto \iota_n \circ f^{-1} \circ \iota_0^{-1}(b)$ from the $\tau_0$-indices to $\tau_n$-indices.
In particular, consider branches $b$, $b'$ in $\tau$ so that $\sigma \circ f(b)$ is carried over $b'$. Then by the above lemma, there is a path in $\Phi \ca$ from $\iota_0(b)$ to $\iota_n(b')$. The branch $\iota_0(b)$ in $\tau_0$ is identified with the branch $(\iota_n \circ f^{-1} \circ \iota_0^{-1})(\iota_0(b)) = \iota_n \circ f^{-1}(b)$. 
\par This motivates defining a graph $G$ whose vertices correspond to those of $\Phi \ca$ in $\delta_0$, and which has an edge $(v,v')$ for every path in $\Phi \ca$ from $v$ in $\delta_0$ to $\iota_n \circ f^{-1} \circ \iota_0^{-1}(v')$ in $\delta_n$ (including paths of length $0$).

\begin{prop}\label{fgpaths}
    The transition matrix $M$ for $f(\tau) \prec_\sigma \tau$ is given by the adjacency matrix of $G$.
\end{prop}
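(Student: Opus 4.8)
The plan is to unwind the definitions and reduce the statement entirely to \Cref{fgpaths1}, with the only real work being the bookkeeping among the three index sets that appear: the branches of $\tau$, the branches of $f(\tau)=\tau_0$ (equivalently, the vertices of $\Phi\ca$ lying in $\delta_0$), and the branches of $\tau_n$ (equivalently, the vertices of $\Phi\ca$ lying in $\delta_n$). Recall that $M$ is the transition matrix for $f(\tau)\prec_\sigma\tau$ with both indices taken to be branches of $\tau$: for branches $b,b'$ of $\tau$, writing the train path $(\sigma\circ f)(b)=(b_1,\dots,b_m)$ in $\tau$ (allowing repetitions), the relevant entry of $M$ is $\#\{\,j:b_j=b'\,\}$, the number of times $(\sigma\circ f)(b)$ traverses $b'$. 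Note $M$ is well defined because $\sigma$ is a composition of folds, hence combinatorial.

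First I would fix a branch $b$ of $\tau$ and feed it to \Cref{fgpaths1}. The lemma produces a bijection between $\{1,\dots,m\}$ and the set of \emph{all} paths in $\Phi\ca$ that start at the vertex $v_b$ on the branch $\iota_0(f(b))$ of $\tau_0\subseteq\delta_0$ and terminate in $\delta_n$, where the path indexed by $j$ ends at the vertex $w_{b_j}$ on the branch $\iota_n(b_j)$ of $\tau_n\subseteq\delta_n$. Consequently, for any branch $b'$ of $\tau$, the number of paths in $\Phi\ca$ from $v_b$ to $w_{b'}$ is exactly $\#\{\,j:b_j=b'\,\}$, i.e. the corresponding entry of $M$.

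Next I would match this against the edges of $G$ emanating from $v_b$. By construction $G$ has one edge $(v_b,v_{b'})$ for each path in $\Phi\ca$ from $v_b$ to $\iota_n\circ f^{-1}\circ\iota_0^{-1}(v_{b'})$. Tracing through the markings, $v_{b'}$ lies on $\iota_0(f(b'))$, so $\iota_0^{-1}$ carries it to $f(b')\subseteq S^\circ$, then $f^{-1}$ to $b'\subseteq\tau$, then $\iota_n$ to $\iota_n(b')\subseteq\tau_n$; hence $\iota_n\circ f^{-1}\circ\iota_0^{-1}(v_{b'})=w_{b'}$. Therefore the number of edges from $v_b$ to $v_{b'}$ in $G$ equals the number of paths in $\Phi\ca$ from $v_b$ to $w_{b'}$, which by the previous paragraph is the appropriate entry of $M$. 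As $b,b'$ were arbitrary, identifying the vertices of $G$ with the branches of $\tau$ via $v_b\leftrightarrow b$ identifies the adjacency matrix of $G$ with $M$ (up to the usual choice of which index of an adjacency matrix records the tail and which the head of an edge, which is immaterial for the irreducibility statements to follow). The crux of the proof is \Cref{fgpaths1}; the only step requiring genuine care is the reindexing computation above, namely checking that the map $\iota_n\circ f^{-1}\circ\iota_0^{-1}$ built into the definition of $G$ is precisely the one converting ``transition matrix indexed by branches of $f(\tau)$'' (the natural output of the flow-graph bookkeeping, which lives over $\delta_0$) into ``transition matrix indexed by branches of $\tau$'' (which lives over $\delta_n$, then gets identified back into $\delta_0$ inside $\Delta$). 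Finiteness of $G$ costs nothing extra, since \Cref{fgpaths1} already bounds the number of paths out of each vertex by the length of the associated image train path.
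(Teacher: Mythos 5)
Your proposal is correct and follows the same route as the paper: the paper's proof likewise reduces the statement to \Cref{fgpaths1} and observes that the reindexing map $\iota_n \circ f^{-1} \circ \iota_0^{-1}$ built into the definition of $G$ converts the $\delta_0$-indexed endpoint data into the $\tau$-indexing used for $M$. Your write-up just makes the index bookkeeping (and the multiplicity count when $b_j=b_k$) more explicit than the paper does.
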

\begin{proof}
    Let $b_i$, $b_j$ be branches of $\tau$. Recall that the $(i,j)$-th entry of $M$ records the number of times $f(b_i)$ is carried of $b_j$, and the $(i,j)$-th entry of the adjacency matrix of $G$ records the number of paths in $\Phi \ca$ from the vertex corresponding to $\iota_0(b_i)$ to the vertex corresponding to $\iota_n \circ f^{-1} \circ \iota_0^{-1}(b_j)$. By \Cref{fgpaths1}, these quantities coincide.
\end{proof}

\subsection{Walls of veering triangulations}\label{wvts}
We now summarize Agol--Tsang's characterization of the strongly connected components of the flow graph.
\begin{defn} \cite[Definition 3.3]{AT}:
    Given $w \ge 2$, $h \ge 1$, a \textit{\textbf{wall}} is a collection of tetrahedra $t_{i,j}$ in a veering triangulation $\Delta$ with $1 \le i \le w+1$, $j \in \ZZ/h$ such that for $2 \le i \le w$,

    \begin{enumerate}
        \item The bottom edge of $t_{i,j}$ is the top edge of $t_{i,j+1}$
        \item  If $i$ is odd, the bottom edge of $t_{i,j}$ is a side edge of $t_{i-1,j}$ and $t_{i+1,j}$ and is not a side edge of any other tetrahedra in $\Delta$. If $i$ is even, the bottom edge of $t_{i,j}$ is a side edge of $t_{i-1,j+1}$ and $t_{i+1,j+1}$ and no other tetrahedra in $\Delta$.
    \end{enumerate}

    $w$ is called the \textit{\textbf{width}} of the wall and $h$ is called the \textit{\textbf{height}}. We will assume that walls are \textit{\textbf{maximal}}, i.e. not properly contained in any other wall of greater width.
\end{defn}

For $2 \le i \le w$, there is a cycle $c_i$ in $\Phi$ which intersects the top edge of $t_{i,j}$ for every $j$ and no other edges. It follows that there are no paths from $c_i$ to the rest of the flow graph, so we call these cycles \textit{\textbf{infinitesimal}}. There are also cycles $c_1$ and $c_{w+1}$ associated to the stacks of tetrahedra $\{t_{1,j}\}$ and $\{t_{w+1,j}\}$ which we call \textit{\textbf{boundary}} cycles. If the top edge of a tetrahedra $t_{i,j}$ in a wall is dual to a vertex lying in a boundary cycle (so $i=1$ or $w+1$), we also call it a \textit{\textbf{boundary}} tetrahedron. If not, we call it an \textit{\textbf{infinitesimal}} tetrahedron. 

\begin{prop}\label{fg sccs}
    \cite[Theorem 3.5]{AT}: The strongly connected components of $\Phi$ consist of the infinitesimal cycles and one ``reduced'' subgraph $\Phi_{red}$.
\end{prop}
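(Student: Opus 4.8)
This is \cite[Theorem 3.5]{AT}; here is the shape of the argument. Recall that the vertices of $\Phi$ are dual to the edges of $\Delta$, and that each tetrahedron $t$ contributes exactly three directed edges of $\Phi$, all terminating at the vertex dual to the top edge of $t$: one from the vertex dual to the bottom edge of $t$, and one from each of the two side edges of $t$ whose color is \emph{opposite} to that of the bottom edge. Hence the out-edges at the vertex $v$ dual to an edge $e$ correspond to the unique tetrahedron having $e$ as its bottom edge, together with the tetrahedra having $e$ as a side edge of the color opposite to their bottom edge; in particular every vertex of $\Phi$ has at least one out-edge. The plan is to prove (I) every infinitesimal cycle $c_i$ is a strongly connected component with no path to $\Phi \setminus c_i$, and (II) the subgraph $\Phi_{red}$ obtained by deleting every infinitesimal cycle is strongly connected and has a directed path to every vertex of $\Phi$. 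Together these describe the condensation of $\Phi$ as a ``star'' with center $\Phi_{red}$ and leaves the infinitesimal cycles, which is exactly the statement.

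For (I), fix a maximal wall, an index $2 \le i \le w$, and a vertex $v \in c_i$ dual to $e = \mathrm{top}(t_{i,j})$. By wall axiom (1) we have $e = \mathrm{bot}(t_{i,j-1})$, and since an edge is the bottom edge of a unique tetrahedron, the only out-edge at $v$ coming from ``$e$ is a bottom edge'' lands on the vertex dual to $\mathrm{top}(t_{i,j-1}) \in c_i$. Any other out-edge at $v$ comes from ``$e$ is a side edge'' of some tetrahedron $t'$, and by wall axiom (2) applied to $t_{i,j-1}$ the only candidates are $t' = t_{i-1,\ast}$ and $t' = t_{i+1,\ast}$ for the appropriate second indices. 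It then suffices to check --- a short bookkeeping with the veering coloring rules (opposite side edges the same color, adjacent side edges opposite colors, bottom edges colored by fold type) together with the wall axioms --- that $e$ sits inside each of $t_{i\pm 1,\ast}$ as a side edge of the \emph{same} color as that tetrahedron's bottom edge, so that it contributes no edge of $\Phi$ there. Thus every out-edge at $v$ stays in $c_i$, so no vertex outside $c_i$ is reachable from $c_i$; being a directed cycle, $c_i$ is therefore a strongly connected component (a closed one). The same check fails for the boundary stacks $\{t_{1,\ast}\}$ and $\{t_{w+1,\ast}\}$ --- wall axiom (2) is imposed only for $2 \le i \le w$, so $\mathrm{top}(t_{1,j})$ may be a side edge of tetrahedra outside the wall --- which is why the boundary cycles $c_1,c_{w+1}$ are not closed and lie in $\Phi_{red}$.

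For (II), the two ingredients are: (a) the infinitesimal cycles are the \emph{only} closed strongly connected components properly contained in $\Phi$; and (b) $\Phi_{red}$ is strongly connected. Granting these, the condensation of $\Phi$ is the claimed star: its only sinks are the infinitesimal cycles (or $\Phi_{red} = \Phi$ itself, if there are no walls), and each infinitesimal cycle is reached from $\Phi_{red}$ because, by the wall structure, a vertex of $c_i$ has an in-edge from a vertex outside $c_i$, and such a predecessor cannot lie in a closed component, hence by (a) and (b) lies in $\Phi_{red}$. Part (a) is the ``forward'' direction of Agol--Tsang's wall analysis: reading the local structure of $\Phi$ at each tetrahedron of a closed strongly connected component forces exactly the incidence-and-color pattern in the definition of a wall, with the periodicity $j \in \ZZ/h$ supplied by finiteness of $\Delta$. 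Part (b) is proved contrapositively: if $u, v \in \Phi_{red}$ admit no directed path $u \to v$ in $\Phi$, then the frontier of the set of vertices reachable from $u$ assembles --- by the same coloring-and-incidence bookkeeping --- into a wall, so the obstruction between $u$ and $v$ is carried entirely by infinitesimal cycles and disappears once those are deleted. This frontier-to-wall extraction, and the coloring bookkeeping on which both parts rest, is the main obstacle; in the present paper the proposition is quoted directly from \cite{AT}.
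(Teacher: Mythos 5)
The paper gives no argument for this proposition at all: it is imported verbatim as \cite[Theorem 3.5]{AT}, and your proposal ultimately does the same, since the two substantive steps (that every closed strongly connected component must have the incidence-and-coloring pattern of a wall, and that the complement $\Phi_{\red}$ of the infinitesimal cycles is strongly connected) are explicitly deferred to Agol--Tsang's wall analysis. The half you do carry out --- that each infinitesimal cycle $c_i$, $2\le i\le w$, has no out-edges leaving it, via wall axiom (1), uniqueness of the tetrahedron below a given edge, and the coloring bookkeeping --- is correct and consistent with the paper's own surrounding observations (the remark following the wall definition and \Cref{wallprop}), so your treatment matches the paper's.
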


Let $A$ be the adjacency matrix of $\Phi$, and let 
$A_{red}$ be the square submatrix of $A$ giving the adjacency matrix of the subgraph $\Phi_{red}$ of $\Phi$. 
Let $\Phi_{\red} \ca = q^{-1}(\Phi_\red)$ and let $G_\red$ be the subgraph of $G$ induced by the vertices corresponding to those of $\Phi_{\red} \ca$ in $\delta_0$. Since $\Phi_\red$ is strongly connected, $A_\red$ is irreducible.

\begin{cor}
    If $\Phi$ is not strongly connected, it admits a labeling so that its adjacency matrix $A$ is of the form

    $$A=\begin{bmatrix}
        A_{\red}  & 0 \\
        B & S
    \end{bmatrix}$$

    Where $B$ corresponds to edges from $\Phi_{\red}$ into infinitesimal cycles and $S$ is a permutation matrix.
\end{cor}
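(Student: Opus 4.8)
The plan is to use Agol--Tsang's classification of strongly connected components (\Cref{fg sccs}) to choose the vertex labeling, and then read off the four blocks of $A$ one at a time. First I would order the vertices of $\Phi$ by listing those of $\Phi_\red$ first (in any order), followed by the remaining vertices grouped cycle-by-cycle over all the infinitesimal cycles $c_i$ of all the walls of $\Delta$. By \Cref{fg sccs} every vertex of $\Phi$ lies either in $\Phi_\red$ or in exactly one infinitesimal cycle, so this is a labeling of all of $\Phi$, and with respect to it $A$ is a $2\times2$ block matrix whose top-left block is by definition $A_\red$, which is irreducible since $\Phi_\red$ is strongly connected.

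Next I would kill the top-right block. A nonzero entry there would be an edge of $\Phi$ from a vertex lying on some infinitesimal cycle $c_i$ to a vertex of $\Phi_\red$, and this would give a path from $c_i$ to the rest of the flow graph, contradicting the fact recorded in \Cref{wvts} that infinitesimal cycles have no outgoing paths. Hence the top-right block is $0$, and by elimination the bottom-left block $B$ is exactly the collection of edges from $\Phi_\red$ into the infinitesimal cycles, as asserted.

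The remaining point, and the only one that requires real work, is that the bottom-right block $S$ — the adjacency matrix of the full subgraph of $\Phi$ on the infinitesimal vertices — is a permutation matrix, equivalently that this subgraph is a disjoint union of simple directed cycles. There is no edge between two distinct infinitesimal cycles (again it would produce an outgoing path from one of them), so it suffices to check that each $c_i$ sits in $\Phi$ as an induced subgraph with no chords. For this I would use the local picture of a wall from \Cref{wvts}: the vertex of $c_i$ dual to the top edge of $t_{i,j}$ is also dual to the bottom edge of $t_{i,j-1}$, the bottom-to-top edge of $\Phi$ inside $t_{i,j-1}$ is precisely the corresponding edge of $c_i$, and the only other tetrahedra in which this edge occurs are the two in the adjacent rows $i\pm1$, whose top edges are dual to vertices lying in $c_{i\pm1}$ or in a boundary cycle — and no edge of $\Phi$ out of $c_i$ can land there. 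So every infinitesimal vertex has out-degree exactly one in $\Phi$, with its unique out-edge running along its own cycle; the infinitesimal subgraph is therefore $\bigsqcup_i c_i$ and $S=\bigoplus_i P_i$ is a direct sum of cyclic permutation matrices, hence a permutation matrix.

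I expect this last step to be the main obstacle: the first three blocks drop out immediately from the SCC classification and the ``no outgoing path'' property, but upgrading $S$ from ``adjacency matrix of some out-degree-one digraph'' (which a priori could have chords or reuse vertices) to an honest permutation matrix forces one to unwind the combinatorics of a wall, in particular that the top edge of $t_{i,j}$ meets only the neighboring rows of the wall as a side edge.
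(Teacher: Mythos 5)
Your proof is correct and follows the route the paper itself relies on: the paper states this corollary without an explicit proof, treating it as immediate from \Cref{fg sccs} together with the observation in \Cref{wvts} that infinitesimal cycles admit no outgoing paths, and your block-by-block reading (with the convention forced by the description of $B$) is exactly that argument. The one point you rightly flag as needing work --- that $S$ has no chords, i.e.\ each infinitesimal vertex has a unique out-edge running along its own cycle, which you extract from the wall combinatorics (condition (2) in the definition of a wall, cf.\ \Cref{wallprop}) --- is precisely the detail the paper leaves implicit, so your write-up is a faithful, slightly more careful version of the intended proof rather than a different approach.
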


\begin{lemma}\label{gred irr}
    \cite[section 4]{AT}: $G_{\red}$ is strongly connected, hence the adjacency matrix of $G_\red$ is irreducible.
\end{lemma}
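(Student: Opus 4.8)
I would prove the lemma by transferring the strong connectivity of $\Phi_{\red}$ (\Cref{fg sccs}) across the path description of the transition matrix. Two structural facts are already available and drive the argument. First, $\Phi\ca$ carries a ``stage'' function: the vertices originally placed in $\delta_0$ have stage $0$, the vertex added in passing from $\Phi_i$ to $\Phi_{i+1}$ has stage $i+1$, and every edge of $\Phi\ca$ strictly increases the stage (its tail lies on a branch of $\tau_i$, hence has stage at most $i$, and its head has stage $i+1$). Thus $\Phi\ca$ is a directed acyclic graph, a directed path cannot revisit $\delta_0$ after its first vertex, and consequently any directed path in $\Phi$ through two vertices lying over $\delta_0$ must pass through $\delta_n$. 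Second, by \Cref{fgpaths1} an edge of $G$ from the vertex indexed by $v$ to the vertex indexed by $v'$ is exactly a directed path in $\Phi\ca$ from $v \in \delta_0$ to $\iota_n \circ f^{-1} \circ \iota_0^{-1}(v') \in \delta_n$, and \Cref{fgpaths} identifies the adjacency matrix of $G$ with the transition matrix. Hence concatenating $\delta_0$-to-$\delta_n$ paths of $\Phi\ca$ along the gluing $\delta_n \sim \delta_0$ reproduces exactly the directed paths of $G$; and since every subpath of a directed path in $\Phi_{\red}$ lies in $\Phi_{\red}$, concatenating $\delta_0$-to-$\delta_n$ paths of $\Phi_{\red}\ca$ reproduces exactly the directed paths of $G_{\red}$.

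Granting this, the proof runs as follows. Let $x$ and $y$ be vertices of $G_{\red}$, i.e.\ vertices of $\Phi_{\red}\ca$ lying in $\delta_0$, so that $q(x)$ and $q(y)$ are vertices of $\Phi_{\red}$. Since $\Phi_{\red}$ is strongly connected and contains a directed cycle (e.g.\ a boundary cycle of a wall, or any directed cycle of $\Phi$ when $\Phi$ is itself strongly connected), there is a directed path $P$ of positive length in $\Phi_{\red}$ from $q(x)$ to $q(y)$. Lift $P$ to $\Phi\ca$ beginning at the chosen $\delta_0$-representative $x$; whenever the lift reaches $\delta_n$, continue it from the $\delta_0$-vertex glued to the point it has reached, and repeat. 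By the first fact this expresses $P$ as a concatenation of directed $\delta_0$-to-$\delta_n$ paths in $\Phi_{\red}\ca$, and by the second fact this concatenation is a directed path from $x$ to $y$ in $G_{\red}$. Therefore $G_{\red}$ is strongly connected, and by the equivalence recalled in \Cref{background} its adjacency matrix is irreducible.

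The step I expect to cost real work --- the reason the lemma is attributed to \cite[section 4]{AT} rather than being an immediate corollary --- is the vertex bookkeeping. The map sending a vertex of $\Phi\ca$ in $\delta_0$ to its image in $\Phi$ need not be injective: a branch of $\tau$ whose dual edge is never folded persists through all of $\delta_0, \dots, \delta_n$, and distinct such branches may be glued to one another in $\Delta$, so $G_{\red}$ can have strictly more vertices than $\Phi_{\red}$. One therefore cannot merely lift an arbitrary $\Phi_{\red}$-path and read off its $G$-endpoints: one must anchor the $\delta_0$-representative $x$ at the start, follow the specific gluing $p \mapsto (\iota_0 \circ f \circ \iota_n^{-1})(p)$ at each wrap, and check that the final wrap lands precisely on $\iota_n \circ f^{-1} \circ \iota_0^{-1}(y)$, the $\delta_n$-vertex indexing $y$. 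Carrying out this bookkeeping --- equivalently, identifying $G_{\red}$ with the first-return graph to $\delta_0$ determined by $\Phi_{\red}$ and checking that it remains strongly connected --- is the content of \cite[section 4]{AT}.
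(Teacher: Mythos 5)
First, a point of comparison: the paper does not prove this lemma at all --- it is imported wholesale from \cite[section 4]{AT} --- so there is no internal argument to measure you against, and a self-contained proof would genuinely add something. Your strategy (transfer the strong connectivity of $\Phi_{\red}$ from \Cref{fg sccs} to $G_{\red}$ by viewing each edge of $G$ as a $\delta_0$-to-$\delta_n$ path in $\Phi\ca$ and concatenating across the gluing) is the natural one. But the step your second paragraph treats as automatic is exactly where the content lies, and as written it fails. A vertex $u$ of $\Phi$ can have several preimage vertices in $\Phi\ca$: the copy created by a flip, followed by a chain of $\delta_0$-copies joined by the gluing through never-flipped edges. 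The outgoing edges of $u$ in $\Phi$ are distributed among these copies according to when the corresponding tetrahedra are layered, and the only moves available to your lift are to traverse a $\Phi\ca$-edge at the current copy or to pass from a $\delta_n$-copy to the next $\delta_0$-copy in its chain; you can never move to an earlier copy. Since the prescribed starting representative $x$ may sit late in its chain, the first edge of your chosen path $P$ may lift only at a copy unreachable from $x$, so the lift is stuck at the very first step --- the stage-function/DAG observation does nothing to prevent this. Symmetrically, the lift of $P$ terminates on the flip-created copy of $q(y)$, not at the representative $y$. Your third paragraph concedes both issues and defers them to \cite{AT}, so the proposal does not close the gap; in effect it reproduces the paper's citation.

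The argument can be completed along your lines with three additions. (i) The head of every edge of $\Phi\ca$ is the vertex on the top edge of the tetrahedron just layered, i.e.\ the earliest copy in its chain; hence once the lift is launched it never gets stuck, every needed edge lifts after finitely many forward hops, and each hop closes off a genuine $G$-edge whose endpoint is a preimage of a vertex of $\Phi_{\red}$ and therefore lies in $G_{\red}$ (it is an induced subgraph). (ii) Launch from $x$ by walking forward along its chain using length-zero paths (these are $G_{\red}$-edges) to the copy that is eventually flipped, and exit through the bottom-to-top edge of that tetrahedron; this stays in $\Phi_{\red}$, since if its top edge lay on an infinitesimal cycle the tetrahedron would be an infinitesimal wall tetrahedron and its bottom edge would be infinitesimal as well, contradicting $q(x) \in \Phi_{\red}$. (iii) Finish by hopping off the flip-created copy of $q(y)$ and walking forward along its chain by length-zero $G_{\red}$-edges until you land exactly on $y$. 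Without (i)--(iii), the key assertion that ``concatenating $\delta_0$-to-$\delta_n$ paths of $\Phi_{\red}\ca$ reproduces exactly the directed paths of $G_{\red}$ from $x$ to $y$'' is unjustified; with them, your transfer argument becomes an actual proof rather than a restatement of the appeal to \cite{AT}.
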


Here we summarize some observations about the structure of walls that we will need, which are rephrasings or immediate consequences of \cite[section 3]{AT}:
\begin{prop}\label{wallprop}
    \begin{enumerate}
        \item For a wall $W$ of width $w \ge 3$, the edges of $\Delta$ which are dual to infinitesimal and boundary cycles in $W$ all have the same color,
        and the rest of the edges bounding tetrahedra in $W$ have the opposite color.

        \item Every triangle bounding a tetrahedron in a wall has two edges which are dual to infinitesimal or boundary cycles, and one which is not.

        \item Maximal walls $\{t_{i,j}\}$, $\{t'_{i',j'}\}$ of widths $w$, $w'$ are disjoint except possibly at their boundary tetrahedra $t_{1,*}$, $t_{w+1,*}$ and $t'_{1,*}$, $t'_{w'+1,*}$, so no infinitesimal cycle (or infinitesimal tetrahedron) is contained in more than one wall.
        
        \item A tetrahedron appears in walls at most twice. If a tetrahedron appears twice in walls it is a boundary tetrahedron. If a boundary tetrahedron appears twice in walls, there are exactly two infinitesimal cycles meeting its side edges, and the side edges meeting infinitesimal cycles are not adjacent.
    \end{enumerate}
\end{prop}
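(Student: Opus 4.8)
The four assertions are all statements about a single wall $W = \{t_{i,j}\}$ together with the veering coloring recalled in \Cref{clvts}, so the plan is to first pin down precisely which edge of each tetrahedron of $W$ plays which role, and then read off each item; throughout, ``cycle edge'' means an edge dual to an infinitesimal or boundary cycle.

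I would start with an interior column $2 \le i \le w$. By part (1) of the wall definition the top edge of $t_{i,j}$ is the bottom edge of $t_{i,j+1}$, and the infinitesimal cycle $c_i$ is by construction dual exactly to these edges; hence \emph{both} the top and bottom edges of every $t_{i,j}$ are dual to $c_i$. Applying part (2) of the wall definition to $t_{i,j}$ and to its neighbours in columns $i\pm 1$ identifies one side edge of $t_{i,j}$ with an edge dual to $c_{i-1}$ and one with an edge dual to $c_{i+1}$; and using the clause that bottom edges of interior wall tetrahedra are side edges of no other tetrahedra of $\Delta$, the two remaining side edges of $t_{i,j}$ lie in no cycle. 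Then I would invoke the coloring rule that opposite side edges of a veering tetrahedron have equal color while adjacent ones have opposite color to conclude that the two cycle side edges form one opposite pair and the two non-cycle side edges the other (otherwise a cycle side edge and a non-cycle side edge would share a color). The boundary columns $i = 1, w+1$ need a parallel but more delicate bookkeeping, for which I would import from \cite[section 3]{AT} the description of the boundary cycles $c_1, c_{w+1}$ as dual to the stacks of top edges of $t_{1,*}, t_{w+1,*}$ and the local incidences of these tetrahedra with $c_2, c_w$.

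With this local picture, (1) is color propagation along the ladder: the top/bottom and the $c_{i-1}$-, $c_{i+1}$-dual side edges of $t_{i,j}$ all get one color and the two other side edges the opposite color, and since the color of $c_{i+1}$ is also the top/bottom color in column $i+1$, consecutive columns agree; walking $i$ from $1$ to $w+1$ (this is where $w \ge 3$ enters, as in \cite[section 3]{AT}) shows all cycle edges of $W$ share one color and the rest the opposite. For (2), each triangular face of a tetrahedron of $W$ consists of its top or bottom edge (a cycle edge) together with two \emph{adjacent} side edges; since the two cycle side edges form the opposite pair, exactly one of those two is a cycle edge, giving two cycle edges and one non-cycle edge per face. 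For (3), an interior tetrahedron together with the choice of which of its two pairs of opposite side edges is the cycle pair determines the whole wall, since one recovers $W$ by deterministically propagating the wall-axiom adjacencies in both senses of $j$ and of $i$ until the ladder closes off at the boundary columns; hence an interior tetrahedron lies in a unique maximal wall, two distinct maximal walls can share only tetrahedra that are boundary in each, and in particular no infinitesimal cycle or infinitesimal tetrahedron (all interior) lies in two walls. For (4), a tetrahedron has exactly two pairs of opposite side edges, each of which by (3) can be the cycle pair of at most one maximal wall, so it lies in at most two walls; if in two, it is boundary in both, the two cycle pairs are its two pairs of opposite side edges, and the infinitesimal cycles of the two walls run through these opposite — hence non-adjacent — side edges, of which there are then exactly two, since a boundary tetrahedron of a single wall meets exactly one infinitesimal cycle ($c_2$ or $c_w$).

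I expect the boundary-tetrahedron analysis to be the main obstacle: the wall definition quoted above constrains only columns $2 \le i \le w$, so the facts about $t_{1,*}$ and $t_{w+1,*}$ — which edge is dual to $c_1$ (resp.\ $c_{w+1}$), how they meet $c_2$ (resp.\ $c_w$), and why this forces the color and adjacency conclusions at the ends of the ladder — have to be transcribed carefully from \cite[section 3]{AT} rather than deduced from the definition as stated. Once that is in place, the rest is the deterministic ladder propagation and the two-opposite-pairs bookkeeping sketched above.
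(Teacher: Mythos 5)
Your outline re-derives from the wall axioms what the paper simply imports from \cite{AT}, and the re-derivation has a genuine gap at the crucial coloring step in part (1). The paper's proof of (1) rests on the fact, taken from \cite[section 3]{AT}, that in a wall of width at least three all tetrahedra are fan tetrahedra \emph{of the same color}; in particular every top/bottom edge of the wall carries one fixed color, and then the side edges that are top edges of other wall tetrahedra inherit it, while the adjacent-side-edges-have-opposite-color rule handles the rest. You never establish this input, and your substitute deduction is circular: you argue that the two cycle-dual side edges of an interior tetrahedron must form an opposite pair because ``otherwise a cycle side edge and a non-cycle side edge would share a color'' --- but that is only a contradiction if you already know cycle edges and non-cycle edges have different colors, which is precisely statement (1). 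The generic veering rules (opposite side edges equal, adjacent opposite, hence two red and two blue side edges) are a priori compatible with the two cycle side edges being adjacent, and they say nothing about how the top/bottom color relates to the side-edge colors, so the ladder propagation never gets started. Part (2) inherits this problem, since your count of cycle edges per face uses the opposite-pair claim.

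Part (4) is also off track. The paper takes ``appears at most twice, and then only as a boundary tetrahedron'' directly from \cite[section 3]{AT}, and proves the non-adjacency statement by a flow-graph argument: a side edge of a boundary tetrahedron whose color is opposite that of the bottom edge has at least two outgoing edges in $\Phi$ (one from bottom to top of the tetrahedron for which it is the bottom edge, and one into the top edge of the boundary tetrahedron), so it cannot lie on an infinitesimal cycle; hence infinitesimal cycles can only meet the side edges of the same color as the bottom edge, which form an opposite pair. Your ``two opposite pairs'' bookkeeping misdescribes the geometry: a boundary tetrahedron meets the infinitesimal cycle of a given wall in a \emph{single} side edge, not in a full opposite ``cycle pair'' (only interior tetrahedra have side edges dual to both $c_{i-1}$ and $c_{i+1}$), and two side edges drawn from different opposite pairs are adjacent, so assigning one pair per wall does not yield non-adjacency --- read literally it suggests the opposite conclusion. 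Your determinism/propagation argument for (3) is plausible in outline but still needs the boundary-column structure you defer to \cite{AT}; the paper simply cites \cite[section 3]{AT} for (3) and for the first half of (4). To repair the proposal, import the same-color fan-tetrahedron fact for (1)--(2) and either the flow-graph observation or the corresponding statements of \cite{AT} for (4).
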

\begin{proof}
    In \cite[section 3]{AT} it is observed that tetrahedra in a wall of width at least three all have top and bottom edges of the same color (they are all ``fan tetrahedra" of the same color). Since two of the side edges of $t_{i,j}$ are the top edges of other tetrahedra in the wall, they have the same color as the top edges. Since adjacent side edges in a veering tetrahedron have opposite color, the remaining two side edges of $t_{i,j}$ have the opposite color.

    \par Every triangle in a taut tetrahedron is bounded by two adjacent side edges and either the top or the bottom edge.
    Every top edge is dual to an infinitesimal or boundary cycle, and one of the side edges is the top edge of a different tetrahedron in the wall. The third edge is not dual to an infinitesimal or boundary cycle of the wall because it is the opposite color.
    \par The third observation is made explicitly in \cite[section 3]{AT}, as is the observation that a tetrahedron appears in walls at most twice, in which case it must be a boundary tetrahedron. 
    A side edge $e$ of a boundary tetrahedron with opposite color to the bottom edge cannot correspond to a vertex in an infinitesimal cycle because there are at least two edges out of the corresponding vertex-- one from the bottom to the top of the tetrahedron for which $e$ is the bottom edge, and one from $e$ to the top of the boundary tetrahedron. Hence, infinitesimal cycles meeting the side edges of a boundary tetrahedron must pass through side edges of the same color as the bottom edge, which are on opposite sides of the tetrahedron.
\end{proof}

\subsection{Infinitesimal branches of veering train tracks}\label{ibvtts}
In this section we use what we know about infinitesimal cycles of $\Phi$ to characterize the structure of $\tau$ around branches which obstruct irreducibility.

    By abuse of notation denote by $f_{|\tau}$ the restriction of $(\sigma \circ f)$ to $\tau$. A branch $b$ of $\tau$ is called \textit{\textbf{infinitesimal}} if $f^n_{|\tau}(b) = b$ for some $n > 0$, i.e. if for some power of $f$, $b$ is only carried over itself and no other branches. 

\begin{prop}\label{infbranchprop}
    A branch of $\tau$ is infinitesimal if and only if its image in $\Delta$ is dual to the top edge (or equivalently the bottom edge) of a tetrahedron in a wall.
\end{prop}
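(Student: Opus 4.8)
# Proof Proposal for Proposition~\ref{infbranchprop}

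\textbf{Strategy.} The plan is to prove both directions by translating the dynamical condition $f^n_{|\tau}(b) = b$ into a statement about paths in the cut-open flow graph $\Phi\ca$ via \Cref{fgpaths1} and \Cref{fgpaths}, and then to invoke the Agol--Tsang structure theory (\Cref{fg sccs}, \Cref{wallprop}) which says that the only nontrivial periodic behavior in $\Phi$ lives on infinitesimal (or boundary) cycles, which in turn come precisely from walls. The key dictionary is: a branch $b$ of $\tau$ corresponds to a vertex $v_b := \iota_0(b)$ of $\Phi\ca$ in $\delta_0$ (equivalently to the edge of $\delta_0$ dual to it, equivalently to its image edge in $\Delta$), and by \Cref{fgpaths} the transition matrix $M$ counts edges of the auxiliary graph $G$, i.e. $\Phi\ca$-paths from $\iota_0(b)$ to $\iota_n \circ f^{-1} \circ \iota_0^{-1}$ of the target. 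Iterating, the $(i,j)$ entry of $M^n$ counts length-$n$ cycles of $G$, hence closed paths in $\Phi$ (after quotienting by $q$) of a controlled combinatorial type.

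\textbf{($\Leftarrow$) Suppose the image of $b$ in $\Delta$ is dual to the top edge of a tetrahedron $t_{i,j}$ in a wall $W$ of height $h$.} Then $v_b$ lies on an infinitesimal or boundary cycle $c_i$ of $\Phi$, and \Cref{wallprop}(1)--(2) together with the explicit wall combinatorics show that the edge dual to $c_i$ returns to itself after traversing the $h$ tetrahedra $t_{i,1}, \dots, t_{i,h}$, each contributing exactly one ``bottom-to-top'' edge of $\Phi$. I would argue that this closed cycle in $\Phi$ lifts, under the correspondence of \Cref{fgpaths}, to the statement that $f^m_{|\tau}(b)$ is carried over $b$ exactly once and over nothing else, for $m$ equal to $h$ times the number of times the folding sequence $\sigma$ must be iterated to realize $h$ passes through the wall (concretely, some multiple of $n$). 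The crucial point is that \emph{no other} edges of $\Phi$ enter or leave $c_i$ — for infinitesimal $c_i$ this is the defining ``infinitesimal'' property; for a boundary $c_{w+1}$ (or $c_1$) one must additionally check using \Cref{wallprop}(4) that the only way to return to $v_b$ is around the cycle, since any excursion would have to pass through a side edge of opposite color, which is the bottom edge of its own tetrahedron and hence escapes. So $f^m_{|\tau}(b) = b$, i.e. $b$ is infinitesimal.

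\textbf{($\Rightarrow$) Suppose $b$ is infinitesimal, so $f^n_{|\tau}(b) = b$ for some $n > 0$.} Then there is a closed path in $G$ at $v_b$, hence a closed path $\gamma$ in $\Phi$ through the vertex dual to (the image of) $b$, with the property that $\gamma$ accounts for the \emph{entire} image $f^n_{|\tau}(b)$ — i.e. $b$ is carried over no other branch. Unwinding \Cref{fgpaths1}: at each tetrahedron the path has no choice (length-zero when the branch is not folded, the unique bottom-to-top or side-to-top edge when it is), so the fact that the total image is just $b$ forces, at every fold in the sequence, that $b$'s current image branch is \emph{never the large branch being folded except in a way that immediately returns} — more precisely, the induction in \Cref{fgpaths1} shows that if the image ever split into two branches (the side-edge case producing a length-zero piece \emph{plus} a bottom-to-top piece), the transition matrix would record a carrying over an additional branch. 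Therefore the closed path $\gamma$ uses only top edges and their incident bottom-to-top / side-to-top edges in a strictly periodic pattern, which is exactly the structure of an infinitesimal or boundary cycle. By \Cref{fg sccs}, every nontrivial strongly connected subgraph of $\Phi$ other than $\Phi_\red$ is an infinitesimal cycle; and a boundary cycle, while not strongly connected in $\Phi$ on its own, still forces (by its explicit description) that $v_b$ sits atop a wall tetrahedron. The remaining alternative — that $v_b \in \Phi_\red$ — is excluded because \Cref{gred irr} gives that $G_\red$ is strongly connected with \emph{aperiodic-after-enough-steps} growth: a vertex of $G_\red$ has $M^N$-row-sum growing without bound (Perron--Frobenius, since $A_\red$ irreducible and $G_\red$ contains a vertex of out-degree $\ge 2$), so it cannot satisfy $f^n_{|\tau}(b) = b$. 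Hence the image of $b$ in $\Delta$ is dual to a top edge of a wall tetrahedron.

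\textbf{Main obstacle.} The delicate part is the bookkeeping in the ($\Rightarrow$) direction: precisely controlling, via the inductive description in \Cref{fgpaths1}, that ``$b$ carried only over itself'' is equivalent to ``the $\Phi$-path at $v_b$ never branches and returns periodically,'' and then matching that periodic non-branching pattern against the wall axioms of \Cref{wallprop} to conclude we are genuinely on a top edge of a wall tetrahedron (as opposed to some ad hoc periodic configuration outside any wall). I expect this to require carefully ruling out the case where the periodic $\Phi$-path passes through $\Phi_\red$, which is where I would lean hardest on \Cref{gred irr} and a Perron--Frobenius growth argument, and the case of boundary cycles, which is where \Cref{wallprop}(4) is essential.
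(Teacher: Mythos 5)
Your overall strategy is the same as the paper's: translate $f^n_{|\tau}(b)=b$ into statements about paths in $\Phi\ca$ via \Cref{fgpaths1} and \Cref{fgpaths}, then invoke the Agol--Tsang classification \Cref{fg sccs}. Your ($\Rightarrow$) direction is essentially the paper's argument, just with extra machinery: the paper simply notes that since every iterate of $b$ is a single branch, the unique paths from \Cref{fgpaths1} concatenate to a cycle in $\Phi$ with no paths leaving it, hence a strongly connected component, hence an infinitesimal cycle by \Cref{fg sccs}; your Perron--Frobenius growth argument to exclude $\Phi_\red$ is not needed (and your justification that $G_\red$ has a vertex of out-degree $\ge 2$ is itself unproved), though it is not the real problem.

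The genuine gap is in your ($\Leftarrow$) direction, where you argue that a branch dual to the top edge of a \emph{boundary} tetrahedron ($i=1$ or $w+1$) is also infinitesimal, using \Cref{wallprop}(4) to claim ``the only way to return to $v_b$ is around the cycle.'' That conclusion is false: \Cref{thm:infbranch} states precisely that boundary branches are not infinitesimal. The error is a conflation of two conditions. Infinitesimality of $b$ means the image of $b$ under iterates of $\sigma\circ f$ is $b$ \emph{alone}; in flow-graph terms, by \Cref{fgpaths1}/\Cref{fgpaths} this requires that the vertex $v_b$ (and every vertex on its cycle) has no outgoing edge other than the one along the cycle. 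It is not enough that every return to $v_b$ goes around the cycle. Vertices on boundary cycles do have additional outgoing edges --- if a boundary cycle had none, it would be a strongly connected component with no paths out, hence by \Cref{fg sccs} an infinitesimal cycle of some wall, contradicting maximality and the disjointness statement \Cref{wallprop}(3); this is also why boundary cycles sit inside $\Phi_\red$ rather than appearing among the infinitesimal components. Consequently a boundary branch is carried over branches other than itself and $f^n_{|\tau}(b)\ne b$. The paper's converse accordingly treats only infinitesimal cycles, i.e.\ top edges of the tetrahedra $t_{i,j}$ with $2\le i\le w$, which is how the proposition is used later (e.g.\ in the proof of \Cref{thm:infbranch}); granted, the proposition's wording ``a tetrahedron in a wall'' invites your broader reading, but with the scope restricted to infinitesimal tetrahedra the converse is a one-line application of \Cref{fgpaths}: along an infinitesimal cycle there is a unique outgoing path at every stage, so the corresponding branch maps over itself and nothing else. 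Your proposal as written needs the boundary case removed and the ``unique return'' criterion replaced by the ``no extra outgoing edges'' criterion.
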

\begin{proof}
Suppose $f^n_{|\tau}(b_0) = b_0$. Then for every $i$, $f^i_{|\tau}(b_0)$ must consist of exactly one branch $b_i$ since $f_{|\tau}$ is a train track map. By \Cref{fgpaths}, there is a path in the cut open flow graph from (the vertex corresponding to) $\iota_0(b_i)$ to (the vertex corresponding to) $\iota_n(b_{i+1})$, and no path from $\iota_0(b_i)$ to any other branch in $\delta_n$. Therefore the composition of these paths is a cycle in $\Phi$ with no paths back to the rest of the graph. Hence this cycle constitutes a strongly connected component of the flow graph, which is an infinitesimal cycle by \Cref{fg sccs}, and $\iota_0(b_0)$ is dual to an edge of $\Delta$ which intersects this cycle.
\par Conversely, consider an infinitesimal cycle $C \subseteq \Phi$. 
The image $\xi(S^\circ)$ of $S^\circ$ in $\Delta$ intersects $C$ in some edge, which is dual to a branch of $\tau_0$.
By again applying \Cref{fgpaths}, we see that $b$ is infinitesimal since it only maps over itself.
\end{proof}

\par We say that a branch of $\tau$ is a \textit{\textbf{boundary}} branch if its dual edge in $\Phi$ is in a boundary cycle. If a branch is incident to an infinitesimal branch and it is not a boundary branch, we say it is a \textit{\textbf{buttress}} branch, as in \Cref{fig:branchtypes}. By part one of \Cref{wallprop}, in a wall of width at least three boundary branches of a given wall are dual to edges of the same color as those dual to the infinitesimal edges of the wall, and buttress branches are dual to edges of the opposite color.

\begin{figure}[h]
    \centering
    \includegraphics[width=0.66\textwidth]{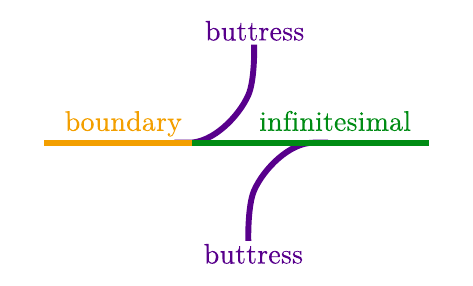}
    \caption{Different types of branches in $N(I_\alpha)$ near an endpoint of $I_\alpha$.}
    \label{fig:branchtypes}
\end{figure}

\begin{lemma}\label{thm:infbranch}
    Boundary and buttress branches of $\tau$ are not infinitesimal.
\end{lemma}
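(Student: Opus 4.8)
The plan is to show that boundary and buttress branches map over strictly more than themselves under some power of $f_{|\tau}$, which by the definition of infinitesimal branch (and \Cref{infbranchprop}) suffices. The key tool is the translation from \Cref{fgpaths} between the carrying behavior of $f_{|\tau}$ and paths in the cut open flow graph: a branch $b$ is infinitesimal if and only if the edge of $\Phi$ dual to (the image of) $b$ lies on an infinitesimal cycle, which happens if and only if it is the top/bottom edge of a tetrahedron in a wall. So it is enough to argue that the edges of $\Phi$ dual to boundary branches and to buttress branches are \emph{not} on infinitesimal cycles. Equivalently, I want to produce, for each such edge, an outgoing edge of $\Phi$ that leaves the candidate cycle — this forces the corresponding vertex to have a path out to the rest of the flow graph, so by \Cref{fg sccs} it cannot lie in an infinitesimal strongly connected component.

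For buttress branches, the argument is essentially the one already run inside the proof of \Cref{wallprop}(4): a buttress branch $b$ is incident to an infinitesimal branch and is not a boundary branch, so its dual edge $e$ in $\Delta$ is a side edge of a wall tetrahedron with the \emph{opposite} color to the top/bottom edges of that wall (by \Cref{wallprop}(1), for walls of width $\geq 3$; the low-width cases must be checked separately). Then $e$ is the bottom edge of some tetrahedron $T$ — every edge of $\Delta$ is the bottom edge of a unique tetrahedron — and the flow graph has an edge from the vertex dual to $e$ to the vertex dual to the top edge of $T$. Since $e$ has the opposite color to the infinitesimal edges of the wall, the top edge of $T$ is not one of those infinitesimal edges, so this is an edge of $\Phi$ leaving the would-be cycle through $e$. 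Hence $b$ is not infinitesimal. The width-$2$ walls need a direct check since \Cref{wallprop}(1) is stated for $w \geq 3$, but the same "count the outgoing edges at the dual vertex" principle applies: any edge which is a side edge of a fan but has an outgoing flow edge to a non-wall vertex cannot be on an infinitesimal cycle.

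For boundary branches, the dual edge $e$ lies in a \emph{boundary} cycle $c_1$ or $c_{w+1}$, and the content of \Cref{wvts} is precisely that boundary cycles, unlike infinitesimal cycles, \emph{do} have paths to the rest of $\Phi$ — they are not among the strongly connected components of \Cref{fg sccs}; rather they feed into $\Phi_\red$. So there is a path in $\Phi$ from (the vertex dual to) $e$ out of the boundary cycle, meaning $e$ is not on any infinitesimal cycle, and again \Cref{infbranchprop} gives that $b$ is not infinitesimal. Concretely: the stack $\{t_{1,j}\}$ (resp. $\{t_{w+1,j}\}$) consists of boundary tetrahedra, and because these are genuine tetrahedra of $\Delta$ (not merely combinatorial fans of maximal color on both sides), at the vertex dual to a top edge of such a tetrahedron there is an outgoing flow edge that is not the infinitesimal-style "top-to-top within the fan" edge; one traces it using \Cref{wallprop}(4) and the coloring description at the end of \Cref{clvts} (edges dual to branches on the right/left of a cusp are blue/red).

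The main obstacle I expect is bookkeeping at the \emph{boundary} tetrahedra, where \Cref{wallprop}(4) allows a tetrahedron to appear in two distinct walls simultaneously, with two infinitesimal cycles through non-adjacent side edges. In that case a boundary branch of one wall could a priori be a buttress or infinitesimal edge of the other, and one must be careful that the outgoing flow edge constructed above genuinely escapes \emph{all} infinitesimal cycles through that edge, not just the one associated to the wall under consideration. This is where I would lean hardest on the color dichotomy of \Cref{wallprop}(1) and the "exactly one bottom edge per tetrahedron" fact to pin down the local picture and confirm the escaping edge lands at a vertex with a path into $\Phi_\red$; the remaining verifications (the small-width walls, and that length-zero paths in \Cref{fgpaths1} don't spoil the count) are routine.
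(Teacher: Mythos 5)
Your reduction of the statement to a claim about the flow graph (via \Cref{infbranchprop}: a branch is infinitesimal iff its dual edge lies on an infinitesimal cycle) matches the paper's starting point, but the step you set aside as ``bookkeeping'' is in fact the entire content of the proof, and your escaping-edge constructions do not address it. The danger is never that a buttress or boundary branch of the wall $W_\alpha$ lies on an infinitesimal cycle of $W_\alpha$ itself (for $w_\alpha\ge 3$ the color dichotomy of \Cref{wallprop}(1) already rules that out for buttress branches); the danger is that it lies on an infinitesimal cycle of a \emph{different} wall $W_\beta$. In that scenario the outgoing edge of $\Phi$ you exhibit for a buttress branch --- from its dual edge $e$, viewed as the bottom edge of its unique tetrahedron $T$, to the top edge of $T$ --- is exactly the cycle edge of the putative infinitesimal cycle of $W_\beta$, since infinitesimal cycles are built precisely from these bottom-to-top edges within a stack; so exhibiting that edge escapes nothing. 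Likewise, for boundary branches your key assertion that boundary cycles ``feed into $\Phi_{\red}$,'' i.e.\ have paths out to the rest of $\Phi$, is not among the quoted facts and is essentially equivalent to what you are trying to prove, so the argument is circular as written.

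The paper closes these gaps with the wall-interaction facts: for a buttress branch in a wall of width at least $3$, assuming its dual edge lies on an infinitesimal cycle of another wall $W_\beta$ forces (via \Cref{wallprop}(3)) a branch dual to a boundary cycle of both walls, which would have to carry both colors --- a contradiction; for a boundary branch, its dual edge is the top edge of exactly one tetrahedron, the boundary tetrahedron $t(b)$ of $W_\alpha$, so if the branch were infinitesimal then $t(b)$ would also be an infinitesimal tetrahedron of some wall, contradicting \Cref{wallprop}(3); and the width-$2$ case, which you call routine, is not, because \Cref{wallprop}(1) is unavailable there --- the paper instead uses \Cref{wallprop}(4) to rule out a buttress branch being infinitesimal in a second wall sharing the boundary tetrahedra. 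To complete your proposal you would need to carry out these two-wall arguments explicitly, at which point you have essentially reproduced the paper's proof.
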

\begin{proof}
    \par We will show that the only infinitesimal branches incident to a given infinitesimal branch are those dual to the top edges of infinitesimal tetrahedra in the same wall. Since boundary and buttress branches are incident to infinitesimal branches, and are not dual to the top edges of infinitesimal tetrahedra, this suffices to show that boundary and buttress branches are not infinitesimal. 
    \par Let $i_\alpha$ be an infinitesimal branch of $\tau$, so $\xi(i_\alpha)$ is dual to the top edge of a tetrahedron $t(i_\alpha)$ in a maximal wall $W_\alpha$ of width $w_\alpha$. 
    We consider two cases, depending on whether $w_\alpha = 2$ or $w_\alpha \ge 3$. 
    Let $v$ be one of the switches incident to $i_\alpha$ and $b$ the buttress branch incident to $v$. The third branch incident to $v$ is either an infinitesimal or boundary branch of $W_\alpha$.
    
    \par If $w_\alpha = 2$, there is a single stack infinitesimal tetrahedra in $W_\alpha$, and $\xi(i_\alpha)$ is dual to the top edge of one of the tetrahedra in this stack. We now show that none of the branches incident to $i_\alpha$ are infinitesimal. The infinitesimal branch $i_\alpha$ is incident to boundary branches $b_1$ and $b_2$ on both sides, which are dual to edges on the top of boundary tetrahedra $t(b_1)$, $t(b_2)$ in $W_\alpha$. If $t(b_1)$ and $t(b_2)$ do not appear a second time in a wall, $b_1$ and $b_2$ cannot be infinitesimal branches, and the buttress branches incident to $i_\alpha$ are not infinitesimal. The infinitesimal tetrahedron $t(i_\alpha)$ cannot appear a second time in a wall by part three of \Cref{wallprop}, so $i_\alpha$ is not adjacent to any other infinitesimal branches. If $t(b_1)$ or $t(b_2)$ appear a second time in a wall, one might be concerned that a buttress branch incident to one of the boundary branches is an infinitesimal branch of the other wall, but this cannot happen by part four of \Cref{wallprop}. Hence, we again conclude that $i_\alpha$ is not adjacent to any other infinitesimal branch when $w_\alpha = 2$.
    
    \par Now we consider the case $w_\alpha \ge 3$. For now suppose the edge dual to $\xi(i_\alpha)$ is blue, so the edge dual to $\xi(b)$ is red.
    If $\xi(b)$ was dual to an edge in an infinitesimal cycle in some other wall $W_\beta$, $t(i_\alpha)$ would have to be a boundary tetrahedron of both walls by part three of \Cref{wallprop}, and so there would be a branch $b'$ of $\tau$ where $\xi(b')$ is dual to a boundary cycle in both walls.
    However, this cannot occur either because $\xi(b')$ being a boundary branch of $W_\alpha$ forces it to be dual to a blue edge since $i_\alpha$ is, while being a boundary branch of $W_\beta$ forces it to be red since $b$ is. 
    If the edge dual to $\xi(i_\alpha)$ is blue, the same argument holds interchanging the colors. We conclude that for $w_\alpha \ge 3$ buttress branches of $W_\alpha$ cannot be infinitesimal.

    \par Similarly, if $b$ is a boundary branch such that $\xi(b)$ is dual to the top edge of a boundary tetrahedron $t(b)$, $b$ cannot also be infinitesimal because otherwise $t(b)$ would also be an infinitesimal tetrahedron, contradicting part three of \Cref{wallprop}, so boundary branches also cannot be infinitesimal for $w_\alpha \ge 3$.
\end{proof}

\begin{lemma}\label{thm:infcomp}
    Let $I \subseteq \tau$ be the subgraph of all infinitesimal branches of $\tau$.
    \begin{enumerate}
        \item The subgraph $I$ is a finite disjoint union of embedded train paths $\{I_\alpha\}_{\alpha = 1 \ldots A}$.
        \item There is an open neighborhood $N(I)$ of $I$ in $S^\circ$ which is a disjoint union of open disks $N(I_\alpha)$ around the train paths $I_\alpha$, and which does not intersect branches of $\tau$ which are not incident to $I$.
        \item The intersection of $\tau$ and $N(I_\alpha)$ is contractible in $S$.
    \end{enumerate}
    
\end{lemma}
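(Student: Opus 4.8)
The plan is to first pin down, from results already in hand, the exact local configuration of $\tau$ along its infinitesimal branches, and then read off (1)--(3) by soft arguments about regular neighbourhoods of arcs in surfaces.

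For part (1), finiteness is immediate since $\tau$ has finitely many branches. To see that $I$ is a $1$-manifold I would argue that every switch $v$ of $\tau$ lying on an infinitesimal branch is incident to at most two infinitesimal branches: such a $v$ is dual to a face of a tetrahedron in the ambient wall, so by part two of \Cref{wallprop} (equivalently, by the local analysis in the proof of \Cref{thm:infbranch}) the three branches at $v$ are two branches dual to infinitesimal or boundary cycles and one buttress branch, and by \Cref{thm:infbranch} neither a boundary branch nor a buttress branch is infinitesimal, so at most one of the other two branches at $v$ lies in $I$. Hence $I$, as a subcomplex of $\tau$, has all vertices of valence $\le 2$, so it is a disjoint union of embedded arcs and circles. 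That the arcs are genuine \emph{train} paths --- i.e. that at an interior switch the two incident infinitesimal branches meet smoothly, one on the large side and one on a small side, with the buttress branch occupying the remaining small half-branch --- is precisely the local model recorded in \Cref{fig:branchtypes}, so I would appeal to that picture (equivalently, to the fact that the branch dual to the bottom edge of a veering tetrahedron is small at the dual switch while the dual of the flanking infinitesimal-or-boundary side edge is large).

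To rule out circle components I would use the wall combinatorics directly: following a maximal chain of infinitesimal branches through a wall $W$ of width $w$, at each switch one moves from a branch in ``row'' $i$ to a branch in row $i\pm1$ (by the incidence conditions in the definition of a wall together with \Cref{wallprop}), having entered from the other of the two; so the chain passes monotonically through the infinitesimal rows $2,\dots,w$ exactly once and then necessarily terminates at a switch whose third branch lies in a boundary row $i=1$ or $i=w+1$, hence is a boundary branch and not in $I$. Thus no chain closes up, and each component $I_\alpha$ is an embedded arc consisting of $w-1$ infinitesimal branches. (Alternatively: $f_{|\tau}$ permutes the finitely many, necessarily periodic, branches of $I$, so a power $f^{N}_{|\tau}$ fixes each of them; a circle component $C$ would then be fixed setwise, and since $\sigma$ is homotopic to the identity $f^{N}$ would preserve the isotopy class of $C$, forcing $C$ to be inessential, but a smooth embedded inessential loop inside a filling train track cuts off a forbidden complementary region.) Finally, by \Cref{thm:infbranch} each component stays inside a single wall, and by part three of \Cref{wallprop} together with \Cref{infbranchprop} distinct walls share no infinitesimal tetrahedra, so the $I_\alpha$ are pairwise disjoint; being closed subsets of the compact set $\tau$, they are moreover a positive distance apart.

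For part (2), with $I=\bigsqcup_\alpha I_\alpha$ now a finite, pairwise-disjoint collection of embedded compact arcs in the surface $S^\circ$, I would take $N(I_\alpha)$ to be the interior of a thin closed regular neighbourhood of $I_\alpha$: a regular neighbourhood of an embedded arc in a surface is a closed disk, the positive separation of the $I_\alpha$ keeps the $N(I_\alpha)$ disjoint, and the branches of $\tau$ not incident to $I$ form a compact set disjoint from $I$, so shrinking the neighbourhoods keeps $N(I)=\bigsqcup_\alpha N(I_\alpha)$ off of them. Part (3) is then essentially free: $\tau\cap N(I_\alpha)$ lies in the open disk $N(I_\alpha)\subseteq S$, so its inclusion into $S$ factors through a contractible set and is null-homotopic; and if one wants $\tau\cap N(I_\alpha)$ itself to be contractible, taking the neighbourhood thin enough that it meets the incident buttress and boundary branches only in short stubs near their switches on $I_\alpha$ makes $\tau\cap N(I_\alpha)$ deformation retract onto $I_\alpha$. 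The step I expect to be the real work is the first one --- extracting from \Cref{thm:infbranch}, \Cref{wallprop} and the veering-tetrahedron models the exact configuration of $\tau$ near an infinitesimal chain (that it is a train path, that the buttress branch sits on a small half-branch, and that the chain terminates at boundary branches) --- since once that local picture is established the remaining assertions are standard facts about regular neighbourhoods of arcs in surfaces.
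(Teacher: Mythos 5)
Your proposal is correct and follows essentially the same route as the paper: valence at most two at each switch via \Cref{wallprop} and \Cref{thm:infbranch}, the large/small analysis at switches to upgrade graph paths to smooth train paths, and regular neighbourhoods of the resulting arcs for parts (2) and (3). The only divergence is your primary argument for excluding circle components, the claim that a chain of infinitesimal branches moves \emph{monotonically} through the rows of the wall; as written this monotonicity is asserted rather than proved (moving to row $i\pm1$ at each switch does not by itself rule out oscillation), but your parenthetical alternative --- a circle would be fixed up to isotopy by a power of $f$, which is incompatible with $f$ being pseudo-Anosov and $\tau$ being a filling track --- is exactly the argument the paper uses, so the proof goes through.
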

\begin{proof}
    Let $i_\alpha$ be an infinitesimal branch of $\tau$, so $\xi(i_\alpha)$ is dual to the top edge of a tetrahedron $t(i_\alpha)$ in a maximal wall $W_\alpha$ of width $w_\alpha$, and let $I_\alpha$ be the connected component of $I$ containing $I_\alpha$. By \Cref{infbranchprop}, each branch in $\xi(I_\alpha)$ is dual to an edge in an infinitesimal cycle of $\Phi$. We will show that $I_\alpha$ is actually an embedded train path.
    \par First, observe that $I_\alpha$ cannot contain a graph cycle because if it did, the cycle would be essential in $S^\circ$ and fixed by a power of $f$ since infinitesimal branches are fixed by a power of $f$ by definition.

    \par Since by \Cref{thm:infbranch} the only infinitesimal branches $i_\alpha$ is incident to are other infinitesimal branches of $W_\alpha$, and $I_\alpha$ cannot contain a cycle, $I_\alpha$ is a path as a subgraph of $\tau$. We now show that this graph path is actually a smooth train path. If $w_\alpha = 2$ this is clear because $I_\alpha = (i_\alpha)$. For $w_\alpha \ge 3$, if $i_\alpha$ is small at $v$, the buttress branch incident to $v$ is also small at $v$, so the third branch is large. If $i_\alpha$ is large at $v$, the other two branches are small, so in either case there is a train path through $v$ containing $i_\alpha$ and the third branch. If the third branch is a boundary branch (so it is not infinitesimal), $v$ has a neighborhood in $I_\alpha$ diffeomorphic to $[0,1)$. If the third branch is an infinitesimal branch, $v$ has a neighborhood in $I_\alpha$ diffeomorphic to $(-1,1)$. Since $v$ is an arbitrary switch of the subgraph $I_\alpha$, we conclude that $I_\alpha$ is actually train path.
    
    \par There are only finitely many such train paths because $\tau$ has finitely many branches, and every infinitesimal branch is contained in some such train path since $i_\alpha$ was chosen arbitrarily. Since the $I_\alpha$ are isolated from each other, they are contained in disjoint neighborhoods $N(I_\alpha)$, and the neighborhoods can be taken such that $\tau \cap N(I_\alpha)$ contains only $I_\alpha$ and infinitesimal and buttress half-branches, so in particular $\tau \cap N(I_\alpha)$ is contractible.
\end{proof}

See \Cref{fig:NI} for an example picture of $N(I_\alpha)$.

\begin{figure}[h]
    \centering
    \includegraphics[width=0.75\textwidth]{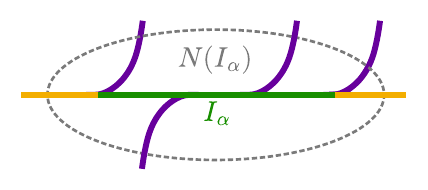}
    \caption{An infinitesimal train path $I_\alpha$ and the branches incident to it. The neighborhood $N(I_\alpha)$ is disjoint from branches not incident to $I_\alpha$ and neighborhoods of other components of $I$.}
    \label{fig:NI}
\end{figure}

Henceforth, let $\{I_\alpha = (i^\alpha_2 \ldots i^\alpha_{w_\alpha})\}_{\alpha=1\ldots A}$ be the maximal train paths consisting of the infinitesimal branches. Let $\{N(I_\alpha)\}_{\alpha = 1\ldots A}$ be pairwise disjoint neighborhoods of the $I_\alpha$ in $S$ which are disjoint from branches which are not incident to $I_\alpha$.
Then $I$ is the union of the $I_\alpha$ and $N(I)$ is the union of the $N(I_\alpha)$. 
Let $v^\alpha_{j+1}$ be the switch between $i^\alpha_j$ and $i^\alpha_{j+1}$, $v_0$ and $v_{w_\alpha+1}$ the switches incident to a boundary branch and $i_0$, $i_{w_\alpha}$ respectively. Let $b^\alpha_1$, $b^\alpha_{w_\alpha+1}$ be the boundary branches incident to $v^\alpha_1$, $v^\alpha_{w_\alpha}$, and let $s^\alpha_j$ be the buttress branch incident to $v^\alpha_j$.

\par Observe that there is exactly one cusp of a patch at the switch $v_j^\alpha$. Let $P^\alpha_j$ be the 
piece of this patch in $N(I_\alpha)$ containing the cusp at $v_j^\alpha$.
Also observe that there are two remaining components of patches intersecting $N(I_\alpha)$ which do not have cusps in $N(I_\alpha)$, and whose boundaries contain the boundary branches $b_1^\alpha$ and $b^\alpha_{w_\alpha + 1}$ respectively. Call these pieces $P_1^\alpha$ and $P_{w_\alpha+1}^\alpha$.
See \Cref{fig:NI sides}(a) for a summary of this notation.

\begin{figure}[h]
    \centering
    \subfigure[]{\includegraphics[width=0.55\textwidth]{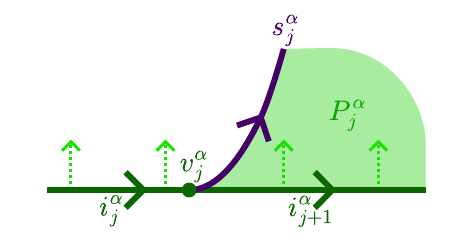}}
    \subfigure[]{\includegraphics[width=0.44\textwidth]{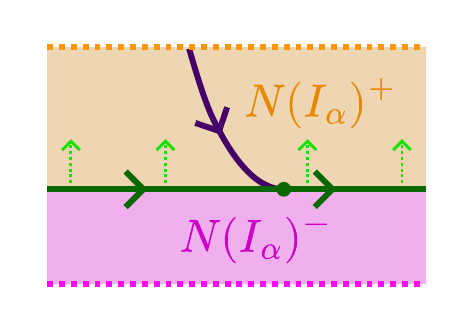}}
    \caption{(a) A local picture of the labels of structures in $N(I)$ and the orientations and coorientations. The green shaded region is the piece of the patch $P^\alpha_j$ which intersects $N(I_\alpha)$ and has a cusp at the switch $v^\alpha_j$. $i_j^\alpha$ and $i_{j+1}^\alpha$ are the infinitesimal branches incident to $v^\alpha_j$, and $s^\alpha_j$ is the buttress branch incident to $v^\alpha_j$. Arrows on branches indicate their orientation and the green dotted arrows indicate the coorientation. (b) The components $N(I_\alpha)^+$, $N(I_\alpha)^-$ of $N(I) \setminus (b_0^\alpha,i_1^\alpha,\ldots,i_{w_\alpha}^\alpha,b_{w_\alpha+1}^\alpha)$. $N(I_\alpha)^+$ is on the positively cooriented side of $I_\alpha$ and $N(I_\alpha)^-$ is on the negatively cooriented side. The buttress branch in (a) points away from $I_\alpha$ and the buttress branch in (b) points towards $I_\alpha$.}
    \label{fig:NI sides}
\end{figure}

Observe that $N(I_\alpha)$ is divided into two half-disks by the train path \linebreak
$(b_1^\alpha,2_1^\alpha,\ldots,i_{w_\alpha}^\alpha,b_{w_\alpha+1}^\alpha)$. We now fix local orientations and coorientations of $\tau$ in $N(I)$ so that we can distinguish these half-disks.
Since $\tau \cap N(I_\alpha)$ is contractible by \Cref{thm:infcomp}, we may fix a orientation on 
the piece of $\tau$ in $N(I_\alpha)$. 
By reversing the indices if necessary, we may assume that a vector pointing from $i_j$ towards $i_{j+1}$ is positively oriented. 
This orientation then determines a coorientation on $\tau$ in $N(I_\alpha)$ by the right hand rule.

Let $N(I_\alpha)^+$ be the half-disk component of $N(I_\alpha) \setminus (b_1^\alpha,i_2^\alpha,\ldots,i_{w_\alpha}^\alpha,b_{w_\alpha+1}^\alpha)$ on the positively cooriented side of $I_\alpha$ and $N(I_\alpha)^-$ the negatively cooriented one. See \Cref{fig:NI sides}(b).
\par If a positively oriented vector at the switch where a buttress branch meets $I_\alpha$ points into the large half-branch, then the orientation on that buttress branch points towards $I_\alpha$ and we say that the branch points \textit{\textbf{towards}} $I_\alpha$. Otherwise, we say the buttress branch points \textit{\textbf{away from}} $I_\alpha$. In \Cref{fig:NI sides} the buttress branch in (a) points towards $I_\alpha$ and the one in (b) points away from $I_\alpha$. If $s_j^\alpha$ intersects $N(I_\alpha)^+$, $P^\alpha_j$ is in $N(I_\alpha)^+$, otherwise it is in $N(I_\alpha)^-$.
\begin{lemma}\label{thm:infcompstruct}
    If $w_\alpha \ge 3$ the buttress branches intersecting $N(I_\alpha)^+$ either all point towards $I_\alpha$ or all point away from $I_\alpha$. The buttress branches intersecting $N(I_\alpha)^-$ all point the opposite way.
\end{lemma}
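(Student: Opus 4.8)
The plan is to analyze the local picture at each switch of $I_\alpha$ and to translate the combinatorial alternative there (which of the two adjacent branches of the spanning path is large at the switch) into the two pieces of geometric data in the statement: which side of $I_\alpha$ the buttress branch lies on, and whether it points towards or away from $I_\alpha$.

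First I would fix $\alpha$ with $w_\alpha \ge 3$ together with the orientation of the piece of $\tau$ in $N(I_\alpha)$ chosen just before the statement, and set $i_1^\alpha := b_1^\alpha$ and $i_{w_\alpha+1}^\alpha := b_{w_\alpha+1}^\alpha$, so that the branches of the train path spanning $N(I_\alpha)$ are $i_1^\alpha,\dots,i_{w_\alpha+1}^\alpha$ and $v_j^\alpha$ is incident to $i_j^\alpha$, $i_{j+1}^\alpha$ and the buttress branch $s_j^\alpha$. As observed in the proof of \Cref{thm:infcomp}, at $v_j^\alpha$ the buttress $s_j^\alpha$ is always a small half-branch, so exactly one of $i_j^\alpha$, $i_{j+1}^\alpha$ is the large half-branch there. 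Since the orientation runs in the direction of increasing index, the positively oriented tangent vector at $v_j^\alpha$ points into the large half-branch precisely when $i_{j+1}^\alpha$ is large at $v_j^\alpha$; by the definition of pointing towards/away from $I_\alpha$, this says that $s_j^\alpha$ points towards $I_\alpha$ if $i_{j+1}^\alpha$ is large at $v_j^\alpha$, and away from $I_\alpha$ if $i_j^\alpha$ is large at $v_j^\alpha$.

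Next I would pin down the side of $I_\alpha$ containing $s_j^\alpha$. Because $w_\alpha \ge 3$, part (1) of \Cref{wallprop} shows that all of $i_1^\alpha,\dots,i_{w_\alpha+1}^\alpha$ are dual to edges of one color while every buttress $s_j^\alpha$ is dual to an edge of the opposite color; suppose the former are blue and the latter red, the other case being symmetric. By the train-track characterization of the veering coloring -- edges dual to branches on the right, respectively left, side of a cusp are blue, respectively red -- the red small half-branch $s_j^\alpha$ lies on the left side of the cusp at $v_j^\alpha$. Comparing this with the coorientation, which is obtained from the orientation by the right-hand rule and whose positive side is $N(I_\alpha)^+$, and noting that the cusp at $v_j^\alpha$ opens away from the large half-branch there, a direct inspection of the two local models (the one with $i_{j+1}^\alpha$ large at $v_j^\alpha$ and the one with $i_j^\alpha$ large at $v_j^\alpha$) shows that $s_j^\alpha$ lies in $N(I_\alpha)^-$ in the first model and in $N(I_\alpha)^+$ in the second; that is, $s_j^\alpha$ lies in $N(I_\alpha)^-$ exactly when it points towards $I_\alpha$.

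Putting the two steps together: for $W_\alpha$ with infinitesimal edges blue and the chosen orientation, every buttress branch meeting $N(I_\alpha)^+$ is of the type ``$i_j^\alpha$ large at $v_j^\alpha$'' and hence points away from $I_\alpha$, while every buttress branch meeting $N(I_\alpha)^-$ points towards $I_\alpha$; if the infinitesimal edges of $W_\alpha$ are red instead, the two conclusions are interchanged. In either case the buttress branches meeting one side of $I_\alpha$ all point the same way and those meeting the two sides point oppositely, as claimed. The step I expect to be the main obstacle is the local analysis in the third paragraph: one has to reconcile three bookkeeping conventions at $v_j^\alpha$ -- which of $i_j^\alpha$, $i_{j+1}^\alpha$ is large, which side of the cusp the red buttress occupies (for which one must use the train-track form of the veering coloring, since $v_j^\alpha$ need not be the switch singled out by the bottom edge dual to $s_j^\alpha$), and which half-disk is $N(I_\alpha)^+$ under the right-hand rule -- and verify that they are mutually compatible only in the two stated patterns. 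Once the conventions are fixed this is a finite verification on the two local models, but it is where care is needed.
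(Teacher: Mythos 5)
Your proposal is correct and takes essentially the same route as the paper: both arguments combine the color dichotomy of \Cref{wallprop} (infinitesimal/boundary edges one color, buttress edges the other) with the left/right-of-cusp characterization of the veering coloring at each switch $v_j^\alpha$, concluding that the side of $N(I_\alpha)$ met by a buttress branch determines the way it points, with the correspondence flipping when the colors swap; your explicit reduction to ``which of $i_j^\alpha$, $i_{j+1}^\alpha$ is large'' and the two-model check is just a spelled-out version of what the paper delegates to its figure. The only discrepancy is the pairing you land on (in the blue case you get $N(I_\alpha)^+$ pointing away, the paper states $N(I_\alpha)^+$ pointing towards), but this depends only on the orientation/right-hand-rule and left/right conventions fixed by the paper's figures and is immaterial to the statement, which asserts only uniformity on each side.
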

\begin{proof}
  Recall from part one of \Cref{wallprop} that when $w_\alpha \ge 3$ the infinitesimal and boundary branches of $I_\alpha$ are all dual to edges of $\Delta$ of the same color. For now, suppose they are all blue. 
  Also recall that branches of $\xi(\tau)$ are dual to edges of $\Delta$ and switches are dual to triangles, so adjacent branches correspond to edges of the same triangle. In particular, the consecutive infinitesimal branches $i_j^\alpha$, $i_{j+1}^\alpha$ correspond to edges of the triangle corresponding to the switch $v_j^\alpha$.
  The third edge in the boundary of this triangle must be colored red since the other two are blue, so by \Cref{wallprop} parts one and two it is dual to the buttress branch $s_j^\alpha$ incident to $v_j^\alpha$.

  If $P_j^\alpha$ intersects $N(I_\alpha)^+$, $s_j^\alpha$ must point towards $I_\alpha$, otherwise the red edge would be on the right side of the cusp at $v_j^\alpha$, contradicting the definition of the coloring. Similarly, if $P_j^\alpha$ intersects $N(I_\alpha)^-$, $s_j^\alpha$ must point away from $I_\alpha$. See \Cref{fig:inwardoutwardcolors}.

  \par By a symmetric argument, if the infinitesimal edges of $I_\alpha$ are red, the buttress branches intersecting $N(I_\alpha)^+$ must point away from $I_\alpha$ and those intersecting $N(I_\alpha)^-$ must point towards $I_\alpha$.
\end{proof}

\begin{figure}[h]
    \centering
    \includegraphics[width=0.66\textwidth]{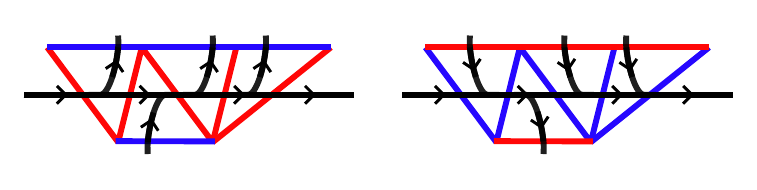}
    \caption{The coloring of edges of tetrahedra in a wall of width at least three determines whether buttress branches adjacent to infinitesimal edges point towards $I_\alpha$ or away from it.}
    \label{fig:inwardoutwardcolors}
\end{figure}
    
    Recall that the cusp of $P_j^\alpha$ at the infinitesimal switch $v_j^\alpha$ is bounded by two embedded train paths. We need one more fact about the structure of $N(I_\alpha)$ concerning the boundaries of $P^\alpha_j$:

    \begin{lemma}\label{thm:infcomppatch}
        For $2 \le j \le w_\alpha$ the train paths constituting the sides of $P_j^\alpha$ which meet at the infinitesimal cusp $v_j^\alpha$ both contain non-infinitesimal branches.
    \end{lemma}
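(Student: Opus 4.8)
First I would identify the two small half-branches at $v^\alpha_j$, since the half-branches of the two sides of $P^\alpha_j$ that meet at the cusp are exactly these. As $I_\alpha$ is a train path through $v^\alpha_j$ (or, when $v^\alpha_j$ is an endpoint of $I_\alpha$, as the infinitesimal and boundary branches at $v^\alpha_j$ form a train path there by \Cref{thm:infcomp}), these two branches occupy the large slot and exactly one small slot at $v^\alpha_j$, so the buttress branch $s^\alpha_j$ occupies the remaining small slot. Hence one of the two sides in question runs along $s^\alpha_j$, which is non-infinitesimal by \Cref{thm:infbranch} and a half-branch of which lies in $N(I_\alpha)$ by construction of $N(I_\alpha)$; that side is done. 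The other side runs along the remaining small half-branch, which is either a boundary branch -- in which case we are again done immediately -- or an infinitesimal branch $i$, so it remains to treat the side $\gamma$ of $P^\alpha_j$ emanating along such an $i$.

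The plan for $\gamma$ is to chase it along $I_\alpha$ and show that after finitely many branches it must leave $I_\alpha$ onto a buttress or boundary branch. Traversing $i$ brings $\gamma$ to the other endpoint $v'$ of $i$, a switch of $I_\alpha$; I claim exactly one of the following occurs: $\gamma$ turns onto the buttress branch at $v'$; $\gamma$ continues onto the boundary branch at $v'$ when $v'$ is an endpoint of $I_\alpha$; or $\gamma$ continues smoothly onto the next infinitesimal branch of $I_\alpha$, necessarily advancing one step further along $I_\alpha$ away from $v^\alpha_j$. In the first two cases $\gamma$ has reached a non-infinitesimal branch and we stop, and since $I_\alpha$ is a finite embedded train path the third case recurs only finitely often, so we are done. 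To establish the trichotomy I would use the orientations and coorientations fixed on $N(I)$ to record, at each switch that $\gamma$ visits, whether the incoming infinitesimal branch is large or small there, and combine this with \Cref{thm:infcompstruct} and the edge-coloring of the wall -- which, via \Cref{wallprop}(1)--(2) and the characterization of the veering coloring by which side of a cusp an edge lies on, pins down the side of $I_\alpha$ on which each buttress branch lies in terms of that large/small datum. The key consequence is that the ambient patch containing the cusp at $v^\alpha_j$ borders $I_\alpha$ on one fixed side throughout $\gamma$'s passage through $I_\alpha$ -- the side forced by $s^\alpha_j$ -- and that at every switch whose cusp lies on that same side the large/small pattern is forced to exactly the value that makes $\gamma$ turn onto that switch's buttress branch (or, at an endpoint, continue onto the boundary branch); so $\gamma$ can never stall at a cusp while still inside $I_\alpha$. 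The case $w_\alpha = 2$, where \Cref{thm:infcompstruct} is not available, I would handle separately using part four of \Cref{wallprop}, exactly as in the proof of \Cref{thm:infbranch}.

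The main obstacle is this last piece of bookkeeping: ruling out the configuration in which the ambient patch borders $I_\alpha$ on a fixed side yet has cusps at two of its switches joined by a sub-arc of $I_\alpha$ -- equivalently, showing $\gamma$ cannot run from one cusp of that patch to another inside $I_\alpha$. Making it airtight means carefully reconciling the wall edge-coloring (\Cref{wallprop}(1)--(2)), the definition of the veering coloring in terms of the side of a cusp, and the $N(I_\alpha)$ orientation conventions, and then checking the sub-cases -- interior versus endpoint switches of $I_\alpha$, and the two possible colors of the wall. Everything else is immediate from $I_\alpha$ being a finite embedded path together with \Cref{thm:infbranch} and \Cref{thm:infcomp}.
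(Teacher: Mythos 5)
Your treatment of the case $w_\alpha \ge 3$ is essentially the paper's argument: one side of $P_j^\alpha$ runs along the buttress branch $s_j^\alpha$, and the other side cannot terminate at a cusp while still inside $I_\alpha$ because \Cref{thm:infcompstruct} forces every cusp on the patch's side of $I_\alpha$ to open in the same direction as the one at $v_j^\alpha$, so that side must exit $I_\alpha$ through a buttress or boundary branch. The ``bookkeeping'' you defer at the end is exactly what \Cref{thm:infcompstruct} already packages, and the paper states this step just as tersely, so that part of your plan is sound.

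The genuine gap is your handling of $w_\alpha = 2$. There \Cref{thm:infcompstruct} is unavailable, and what must be excluded is that the single infinitesimal branch $i_\alpha$ by itself constitutes an entire side of the patch, i.e.\ that $i_\alpha$ is small at both endpoints with the cusps at its two endpoints lying on the same side of $i_\alpha$; in that configuration the side would run from one cusp to the other entirely inside $I_\alpha$ and contain no non-infinitesimal branch. Part four of \Cref{wallprop}, used ``exactly as in the proof of \Cref{thm:infbranch},'' does not address this: that argument rules out branches adjacent to $i_\alpha$ being infinitesimal branches of a second wall, which you already have from \Cref{thm:infbranch} and which is not the obstruction here. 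The paper excludes the bad configuration with the veering coloring: if $i_\alpha$ were a full side, its dual edge would lie on the right side of the cusp at one endpoint and on the left side of the cusp at the other, hence would have to be colored both blue and red, a contradiction. You do list this coloring characterization among your tools for the chase in the main case, but it is precisely in the $w_\alpha = 2$ subcase that it is indispensable, and your proposal does not deploy it there.
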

    \begin{proof}
        One of the small branches incident to $v_j^\alpha$ is the buttress branch $s_j^\alpha$ which is contained in the train path on one side of $P^\alpha_j$, so in particular that train path contains a non-infinitesimal branch.
        \par On the other side of the cusp, observe that the only branches incident to $I_\alpha$ are boundary and buttress branches, which cannot be infinitesimal branches of other walls by \Cref{thm:infbranch}. See \Cref{fig:patchboundaryinwall}.
        \par When $w_\alpha \ge 3$, by \Cref{thm:infcompstruct}, the other buttress branches intersecting $N(I_\alpha)^{+/-}$ point the same way $s_j^\alpha$ does. 
        Hence, this side of $P_j^\alpha$ must exit $I_\alpha$, so it must contain a boundary or buttress branch.
        \par When $w_\alpha = 2$, $I_\alpha = (i_\alpha)$, so we need to argue that $i_\alpha$ itself does not itself constitute an entire side of $P^\alpha_j$. This is true because if $i_\alpha$ did constitute the side of a patch, the edge of $\Delta$ dual to $i_\alpha$ would be on the right side of a cusp of $\xi(\tau)$ on one side, and on the left side of a cusp of $\xi(\tau)$ on the other, so this edge would have to be colored both red and blue.
    \end{proof}

    \begin{figure}[h]
    \centering
    \includegraphics[width=0.66\textwidth]{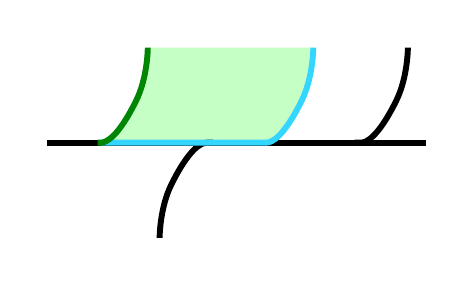}
    \caption{The boundary of a piece of a patch in $N(I_\alpha)$ consists of two train paths which must both contain non-infinitesimal edges.}
    \label{fig:patchboundaryinwall}
\end{figure}

\section{Constructing irreducible train tracks}\label{citts}

We are now ready to use $\tau$ to define a new $f$-invariant train track $\hat{\tau}$ which will have irreducible transition matrix as follows.

Without changing the embedding of $\tau$ outside of $N(I)$, delete each $I_\alpha$ and add one new switch $v_\alpha$ in each $N(I_\alpha)$. Re-attach to $v_\alpha$ the branches $b_1^\alpha$, $b_{w+1}^\alpha$, $s_2^\alpha \ldots s_{w_\alpha}^\alpha$ which were incident to $I_\alpha$ so that two of these branches $b$, $b'$ are adjacent in $\hat{\tau}$ if there is a patch $P$ of $S^\circ \setminus \tau$ so that $b$ and $b'$ bound the same piece of $P$ in $N(I_\alpha)$. 
Define the angle between $b$ and $b'$ to be zero if there is a cusp of $P$ in between $b$ and $b'$ in that piece of $P$ in $N(I_\alpha)$, and $\pi$ if not. 

\par To see that this defines a valid angle structure, we again consider two cases depending on whether or not $w_\alpha = 2$.
For $w_\alpha \ge 3$, observe that if there were more than one piece of a patch in $N(I_\alpha)^+$ not containing a cusp,
then there would be buttress branches in $N(I_\alpha)^+$ pointing both towards $I_\alpha$ and away from $I_\alpha$, contradicting \Cref{thm:infcompstruct}.
On the other hand, if there isn't a piece of a patch in $N(I_\alpha)^+$ containing a cusp, then there aren't any buttress branches in $N(I_\alpha)^+$, in which case the boundary branches are adjacent and assigned angle $0$.
Hence, when $w_\alpha \ge 3$ there is exactly one piece of a patch not containing a cusp in $N(I_\alpha)^+$. The same argument shows that there is exactly one piece of a patch not containing a cusp in $N(I_\alpha)^-$, so there are exactly two $\pi$ angles around $v_\alpha$. See \Cref{fig:tauhat}.

\par For $w_\alpha = 2$, $I_\alpha = (i_\alpha)$, so we may argue based on whether $i_\alpha$ is 
large or small at each endpoint, and whether the buttress branches at each endpoint are in $N(I_\alpha)^+$ or $N(I_\alpha)^-$. The only case in which the angle structure would not be well-defined is if $i_\alpha$ is small at both endpoints, and both buttress branches are on the same side of $N(I_\alpha)$, but this cannot happen because then $i_\alpha$ would constitute an entire side of a patch, contradicting \Cref{thm:infcomppatch}.

\begin{figure}[h]
    \centering
    \includegraphics[width=0.9\textwidth]{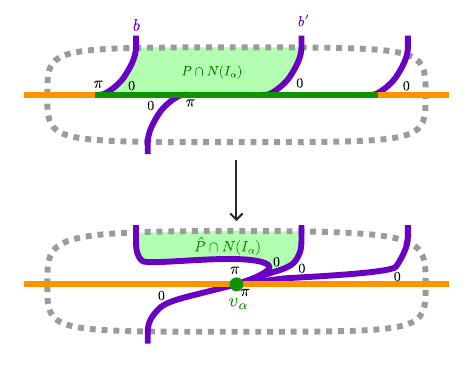}
    \caption{The construction of $\hat\tau$ from $\tau$ in $N(I_\alpha)$. $I_\alpha$ is replaced with a single switch $v_\alpha$ which is incident to all the branches which were incident to $I_\alpha$. Branches of $\hat{\tau}$ are adjacent if they are both in the boundary of the same piece of a patch. Adjacent branches are assigned angle $0$ if there is a cusp of the piece of the patch in between them in the boundary of the patch, and $\pi$ otherwise.}
    \label{fig:tauhat}
\end{figure}

\begin{lemma}\label{patchcorresp}
    The patches of $S^\circ \setminus \hat\tau$ correspond naturally to patches of $S^\circ \setminus \tau$, where corresponding patches coincide on the complement of $N(I)$.
\end{lemma}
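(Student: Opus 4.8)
The plan is to exhibit an explicit correspondence and check that each side of it defines a genuine patch. Since $\hat\tau$ and $\tau$ agree outside $N(I)$ (we only modified the embedding inside each $N(I_\alpha)$), the complementary regions $S^\circ \setminus \hat\tau$ and $S^\circ \setminus \tau$ coincide away from $N(I)$; the only question is how the pieces of patches meeting $N(I_\alpha)$ get reassembled. First I would recall the notation from the construction: the pieces of patches inside $N(I_\alpha)$ are the $P^\alpha_j$ (the piece with a cusp at the infinitesimal switch $v^\alpha_j$, for $2 \le j \le w_\alpha$) together with $P^\alpha_1$ and $P^\alpha_{w_\alpha+1}$ (the two pieces whose boundaries contain the boundary branches $b^\alpha_1$, $b^\alpha_{w_\alpha+1}$ and which have no cusp in $N(I_\alpha)$). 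By the analysis preceding \Cref{fig:tauhat}, after the surgery the pieces in $N(I_\alpha)^+$ consist of exactly one cusp-free piece together with some cusped pieces, and likewise in $N(I_\alpha)^-$; the cusp-free piece in $N(I_\alpha)^+$ is glued (along the buttress and boundary branches re-attached to $v_\alpha$) to whichever piece outside $N(I)$ it continues into, and similarly on the negative side.

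The key steps, in order: (1) Each piece $P^\alpha_j$ of a patch $P$ in $N(I_\alpha)$ is part of a unique patch $P$ of $S^\circ\setminus\tau$; define the correspondence on the level of pieces outside $N(I)$ to be the identity, and on the pieces inside $N(I_\alpha)$ to record which global patch of $S^\circ\setminus\tau$ each belongs to. (2) Conversely, build a patch of $S^\circ\setminus\hat\tau$ by starting from a component of $S^\circ \setminus (\hat\tau \cup \bigcup_\alpha \overline{N(I_\alpha)})$ — which is literally a component of $S^\circ\setminus(\tau\cup\bigcup_\alpha\overline{N(I_\alpha)})$ — and adjoining the pieces of $\hat\tau$-patches inside the $N(I_\alpha)$ it abuts, using the adjacency/angle rule from the construction to see how they fit together smoothly. (3) Check this is well-defined and bijective: the cusp structure inside $N(I_\alpha)$ changes in a controlled way — the $w_\alpha - 1$ infinitesimal cusps at the $v^\alpha_j$ disappear, but \Cref{thm:infcomppatch} guarantees that both train paths bounding each $P^\alpha_j$ at $v^\alpha_j$ contain non-infinitesimal branches, so the piece $P^\alpha_j$ survives as a genuine (nonempty, with nondegenerate boundary) piece meeting $v_\alpha$, rather than collapsing. (4) Conclude that $S^\circ\setminus\hat\tau$ has the same number of complementary components with the same topology as $S^\circ\setminus\tau$ away from $N(I)$, with corresponding patches agreeing on the complement of $N(I)$.

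The main obstacle I expect is step (3): verifying that the surgery neither merges two distinct patches of $S^\circ\setminus\tau$ into one patch of $S^\circ\setminus\hat\tau$ nor splits one into two, and that no complementary region of $\hat\tau$ becomes an unpunctured monogon or bigon (which would violate the train-track axioms). Merging or splitting is controlled by \Cref{thm:infcompstruct}: since all buttress branches meeting $N(I_\alpha)^+$ point the same way (and oppositely on $N(I_\alpha)^-$), the cusp-free pieces on the two sides of $I_\alpha$ stay separated and each connects to a single global region, so the count of regions is preserved. For the train-track axioms, the key point is again \Cref{thm:infcomppatch} together with the fact that each $N(I_\alpha)$ contains exactly one puncture of $S^\circ$ (as $\tau$ is the spine of an ideal triangulation, so patches are dual to ideal vertices and $I_\alpha$, being an arc, sits in a single patch-neighborhood): the puncture that was in one of the $P^\alpha_j$ is still present in the corresponding patch of $\hat\tau$, and no new small complementary region is created because the only pieces affected are the $P^\alpha_j$ and the two cusp-free pieces, all of which retain enough boundary structure by \Cref{thm:infcomppatch} and the width-two argument. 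This gives the natural bijection, completing the proof.
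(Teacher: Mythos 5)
Your overall strategy is the same as the paper's: use the fact that $\hat\tau$ and $\tau$ agree outside $N(I)$, note that every patch of $S^\circ\setminus\tau$ meets the complement of $N(I)$, and then check that the pieces inside each $N(I_\alpha)$ reassemble around the new switch $v_\alpha$ without merging or splitting. That core argument is fine (the paper's own proof is in fact much terser, simply observing that $\partial P\subseteq\tau$ is essential while $\tau\cap N(I)$ is contractible, so $P$ survives as a patch $\hat P$ agreeing with $P$ outside $N(I)$ and with the same number of cusps in $N(I)$), and your use of \Cref{thm:infcompstruct} and \Cref{thm:infcomppatch} to see that the pieces neither merge nor degenerate is in the right spirit.

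However, two of your supporting claims are wrong as stated. First, ``each $N(I_\alpha)$ contains exactly one puncture of $S^\circ$'' is false: the punctures lie in the interiors of patches, the $N(I_\alpha)$ are small neighborhoods of train paths in $\tau$, and the paper's construction explicitly takes the punctures to lie outside $N(I)$ (this is used later, in the proof of \Cref{htpic}); the parenthetical justification (``$I_\alpha$ sits in a single patch-neighborhood'') confuses the sub-path $I_\alpha\subseteq\tau$ with an arc in a patch. The correct way to rule out forbidden complementary regions is simply that each patch of $S^\circ\setminus\hat\tau$ coincides outside $N(I)$ with a once-punctured patch of $S^\circ\setminus\tau$, so it keeps that puncture. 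Second, the cusps at the switches $v_j^\alpha$ do not ``disappear'': the angle assignment in the construction gives angle $0$ at $v_\alpha$ precisely between branches bounding a piece $P_j^\alpha$ with a cusp, so each such cusp persists as a cusp of the corresponding patch at $v_\alpha$, and the cusp count of corresponding patches is preserved. This preservation is not decorative — it is exactly what the paper records in this lemma and uses afterwards to show that patches of $\hat\tau$ are once-punctured $n$-gons with the same cusp data, so your bookkeeping here needs to be corrected even though the bijection itself goes through.
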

\begin{proof}
    By definition, $\hat{\tau}$ agrees with $\tau$ outside $N(I)$. Let $P$ be a patch of $S^\circ \setminus \tau.$ Since $\tau \cap N(I)$ is contractible and $\partial P \subseteq \tau$ is not, 
$\partial P$ cannot be contained in $N(I)$, so $P \cap N(I)^c$ is non-empty. Then there is a patch $\hat{P}$ of $S \setminus \hat{\tau}$ which coincides with $P$ outside of $N(I)$, and has the same number of cusps in $N(I)$. Moreover, $P$ and $\hat{P}$ are smoothly isotopic if $\partial P \cap N(I)$ is embedded.
\end{proof}

\par There is also a correspondence between train paths in $\tau$ and $\hat{\tau}$:

\begin{lemma}\label{thm:trainpaths}
    Let $p\colon [0,1] \to \tau$ be a train path whose image is not contained in $I$. Then there is a train path $\hat{p}\colon [0,1] \to \hat{\tau}$ which runs over the corresponding branches in $p$, omitting the infinitesimal branches.
\end{lemma}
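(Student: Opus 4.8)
The plan is to obtain $\hat p$ by simply deleting from $p$ the branches lying in $I$. Writing $p = (b_1,\dots,b_m)$ as a concatenation of branches of $\tau$ and letting $(b_{j_1},\dots,b_{j_r})$ with $j_1 < \dots < j_r$ be the subsequence of branches not contained in $I$, I would let $\hat p$ run over $b_{j_1},\dots,b_{j_r}$ in order; since $p$ is not contained in $I$ this subsequence is nonempty. It then remains to check three things: that consecutive branches $b_{j_k}, b_{j_{k+1}}$ of $\hat p$ share a switch of $\hat\tau$; that the turn $\hat p$ makes at that switch is legal for the angle structure on $\hat\tau$; and that the two endpoints of $\hat p$ lie at switches of $\hat\tau$.

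First I would record how $p$ can meet $N(I)$. A maximal block $b_i,\dots,b_{i'}$ of consecutive branches of $p$ lying in $I$ is connected, hence contained in a single component $I_\alpha$, and by \Cref{thm:infcomp} the only branches of $\tau$ incident to $I_\alpha$ that are not infinitesimal are the boundary and buttress branches, all of which are reattached to the switch $v_\alpha$ in the construction of $\hat\tau$. Moreover the local picture at switches of $I_\alpha$ used in the proof of \Cref{thm:infcomp} (the large half-branch at an interior switch is always infinitesimal) forces $p$, once it has entered $I_\alpha$ along an infinitesimal branch, to run monotonically along $I_\alpha$ until it exits along a boundary or buttress branch; the only other way $p$ can meet $N(I_\alpha)$ is by clipping a corner at an endpoint switch of $I_\alpha$ using two non-infinitesimal branches. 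In every case the branch $b_{j_k}$ of $\hat p$ immediately preceding a passage of $p$ through $N(I_\alpha)$ and the branch $b_{j_{k+1}}$ immediately following it are boundary or buttress branches of $I_\alpha$, hence both incident to $v_\alpha$; and if $b_{j_{k+1}} = b_{j_k+1}$ with $p$ passing through a switch $v$ of $\tau$ lying in no $I_\alpha$, then $b_{j_k}$ and $b_{j_{k+1}}$ meet at $v$, which is a switch of $\hat\tau$. This gives the first point, and also the third: the first branch of $\hat p$ either starts at $p(0)$ (if $b_1 \notin I$) or emanates from the switch $v_\alpha$ of the component containing the initial block of $p$ lying in $I$, and symmetrically for the last branch.

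The substantive point is legality of the turns. When $b_{j_k}$ and $b_{j_{k+1}}$ meet at a switch of $\tau$ not in any $I_\alpha$, that switch and its angle structure are unchanged in $\hat\tau$, so $\hat p$ repeats the legal turn made by $p$ there. When they meet at some $v_\alpha$, I would use that, by the construction of the angle structure on $\hat\tau$ together with \Cref{patchcorresp}, the branches $b_{j_k}, b_{j_{k+1}}$ make angle $\pi$ at $v_\alpha$ exactly when they bound a common piece of a patch of $S^\circ \setminus \tau$ in $N(I_\alpha)$ with no cusp of that piece between them. To see this holds, note that the arc $\gamma = p \cap N(I_\alpha)$ is an embedded train path which makes a $\pi$-turn at every switch of $I_\alpha$ through which it passes (it pairs the infinitesimal large half-branch with an infinitesimal small half-branch at each interior switch, and a small boundary or buttress half-branch with the large half-branch at the ends), so at each such switch the unique cusp of $S^\circ \setminus \tau$ lies on one side of $\gamma$; by \Cref{thm:infcompstruct} the cusps at the interior switches of $\gamma$ all lie on the same side, so the opposite side of $\gamma$ borders a single piece of a patch all along $\gamma$. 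This piece is bounded by $b_{j_k}$ and $b_{j_{k+1}}$, and any cusp of it inside $N(I_\alpha)$ is reached only by passing ``behind'' $b_{j_k}$ or $b_{j_{k+1}}$, so there is no cusp of it between them: the turn is a $\pi$-turn and hence legal. The width-two case $I_\alpha = (i_\alpha)$ and the corner-clip case have no interior switches and are handled separately but elementarily, using \Cref{thm:infcomppatch} (which prevents $i_\alpha$ from being an entire side of a patch) and direct inspection of the angle structure assigned around $v_\alpha$.

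I expect the main obstacle to be precisely this last step: carefully matching the ``smooth side'' of the traversing arc $\gamma$ with one of the two $\pi$-angles around $v_\alpha$, i.e.\ verifying that $b_{j_k}$ and $b_{j_{k+1}}$ really do co-bound a piece of a patch with no cusp between them rather than sitting across a cusp, and dispatching the width-two and corner-clip special cases. Once the local description of $N(I)$ from \Cref{thm:infcomp}, \Cref{thm:infcompstruct}, and \Cref{thm:infcomppatch} is in hand, the remaining verifications are routine bookkeeping.
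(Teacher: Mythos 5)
Your overall strategy (delete the infinitesimal branches and check that the resulting concatenation is still a legal train path at the new switch $v_\alpha$) is the same as the paper's, and your reduction to the corner-clip case and to a single maximal block of infinitesimal branches per component matches the paper's case setup. The problem is the key legality step. You claim that, by \Cref{thm:infcompstruct}, the cusps at the interior switches traversed by $\gamma = p \cap N(I_\alpha)$ all lie on the same side of $\gamma$, so that the other side of $\gamma$ borders a single cuspless piece of a patch co-bounded by $b_{j_k}$ and $b_{j_{k+1}}$, making them adjacent at angle $\pi$. That is not what \Cref{thm:infcompstruct} says, and it is false in general: the lemma constrains only the \emph{direction} buttress branches point on each side of $N(I_\alpha)$ (all towards $I_\alpha$ on one side, all away on the other); buttress branches, and hence the cusps at their switches, genuinely occur on \emph{both} sides of $I_\alpha$ (this is visible in the paper's figures and is implicit in the construction of the angle structure, which produces one cuspless piece in $N(I_\alpha)^+$ and one in $N(I_\alpha)^-$). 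Indeed, which side the buttress at a traversed interior switch sits on is determined by which of the two incident infinitesimal half-branches is large there, and this alternates along $I_\alpha$ in general. Consequently neither side of $\gamma$ need border a single piece, and $b_{j_k}$, $b_{j_{k+1}}$ need not co-bound any piece at all — e.g.\ the two boundary branches of an $I_\alpha$ with buttresses on both sides are not adjacent in $\hat\tau$ — so your argument that the turn is a ``$\pi$-turn'' in the literal adjacent-branches sense does not go through.

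What is true, and what the paper uses, is the weaker statement that the \emph{total} angle between the entering and exiting branches is $\pi$ on each side of $v_\alpha$: by the construction of $\hat\tau$ each side of $N(I_\alpha)$ contributes exactly one cuspless piece, hence exactly one $\pi$-angle, and it lies between the two branches in the cyclic order, all other intervening angles being $0$. The paper organizes this as a case analysis on whether the entering and exiting branches are boundary or buttress branches: when both are buttress branches, \Cref{thm:infcompstruct} is invoked (in the $w_\alpha \ge 3$ case) to show they lie on opposite sides of $N(I_\alpha)$, one pointing towards and one away from $I_\alpha$, so the concatenation is smooth; the boundary--boundary and boundary--buttress cases are handled by the total-angle count, and the width-two case separately via \Cref{thm:infcomppatch}, much as you anticipate. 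So your proposal needs this legality step replaced: either run the paper's boundary/buttress case analysis, or argue directly with the total angle on each side of $v_\alpha$ rather than asserting adjacency across a single cuspless piece.
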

\begin{proof}
    For simplicity first suppose that $p([0,1]) \cap I$ has only one component, so it is contained in some $I_\alpha$. Let $p = (b_1 \ldots b_n)$.
    
    \par If $p([0,1]) \cap I$ consists of a single switch $p(t)$ for $t \in \{0,1\}$, either $b_1$ or $b_n$ is incident to $I_\alpha$ at $p(t)$, and there is a corresponding branch of $\hat{\tau}$ incident to $v_\alpha$. In this case, define $\hat{p}$ to be $p$ with the branch of $p$ incident to $I$ replaced with this branch incident to $v_\alpha$.
    
    \par If $p([0,1]) \cap I$ consists of a single switch $p(t)$ for $t \not \in \{0,1\}$, there is a $j$ such that $p(t)$ is the switch in between $b_j$ and $b_{j+1}$, where one of $b_j$, $b_{j+1}$ is a boundary branch and the other is a buttress branch. There are corresponding branches incident to $v_\alpha$ in $\hat{\tau}$. In this case, define $\hat{p}$ to be $p$ with $b_j$ and $b_{j+1}$ replaced with the corresponding branches in $\hat{\tau}$.
    
    \par Now suppose $p([0,1]) \cap I$ contains at least one branch. Since components of $I$ are embedded train paths by \Cref{thm:infcomp}, $p([0,1]) \cap I$ consists of a subpath $(b_j\ldots b_k)$ of infinitesimal branches contained in $I_\alpha$. If $j \ge 2$, $b_{j-1}$ is either an infinitesimal or boundary branch incident to $v_\alpha$, if $k \le n-1$, $b_{k+1}$ is either an infinitesimal or boundary branch incident to $v_\alpha$, and these branches are distinct.
    \par If $j=1$ then $k \le n-1$ since $p([0,1])$ is not contained in $I$, and $\hat{p} = (b_{k+1} \ldots b_n)$ is the train path in $\hat{\tau}$ with the desired properties. Similarly, if $k=n$ then $j \ge 2$ and we define $\hat{p} = (b_1 \ldots b_{j-1})$.
    \par Now suppose $j \ge 2$ and $k \le n-1$.  We consider three cases depending on whether $b_{j-1}$ and $b_{k+1}$ are buttress branches or boundary branches.
    \par If $b_{j-1}$ and $b_{k+1}$ are both buttress branches, they are on opposite sides of $N(I_\alpha)$. If $w_\alpha \ge 3$, by \Cref{thm:infcompstruct} one of $b_{j-1}$, $b_{k+1}$ points toward $I_\alpha$ and one points away, so $(b_{j-1},b_{k+1})$ is a train path in $\hat{\tau}$. If $w_\alpha = 2$, the infinitesimal branch $b_j=b_k$ must be small at both endpoints for there to be a train path through the buttress branches at both endpoints, so $(b_{j-1},b_{k+1})$ is again a train path in $\hat{\tau}$.
    
    \par If $b_{j-1}$ and $b_{k+1}$ are both boundary branches, there are patches $P_{j-1}$, $P_{k+1}$ so that $b_{j-1} \cap N(I_\alpha)$, $b_{k+1} \cap N(I_\alpha)$ are contained in distinct components of $P_{j-1} \cap N(I_\alpha)$, $P_{k+1} \cap N(I_\alpha)$ that do not contain cusps. Hence, in $\hat{\tau}$ the angle between $b_{j-1}$ and $b_{k+1}$ at $v_\alpha$ is $\pi$ on each side, so $(b_{j-1},b_{k+1})$ is again a train path in $\hat{\tau}$.
    \par If one of $b_{j-1}$, $b_{k+1}$ is a boundary branch and one is a buttress branch, again the angle between them at $v_\alpha$ is $\pi$ on each side.
    \par We conclude that in each case, $(b_{j-1},b_{k+1})$ is a train path in $\hat{\tau}$, so $\hat{p} = (b_1, \ldots, b_{j-1},b_{k+1},\ldots,b_n)$ is a train path in $\hat{\tau}$ from $p(0)$ to $p(1)$ running over the corresponding branches in $p$.
    \par Since components of $I$ are isolated by \Cref{thm:infcomp}, the general case follows.
\end{proof}

\par Now we construct a map $\hat{f}_{|\hat\tau}\colon \hat{\tau} \to \hat{\tau}$ which we will see can be extended to a train track map on $S^\circ$, with the extension being homotopic to $f$.
Recall that $f_{|\tau}: \tau \to \tau$ is the map obtained by restricting $\sigma \circ f$ to $\tau$.
Let $b$ be a branch of $\tau \setminus I$, $p_b$ the train path specified by restricting $f_{|\tau}$ to $b$, and $\hat{p}_b$ the corresponding train path on $\hat{\tau}$ from \Cref{thm:trainpaths}. Define $\hat{f}_{|\hat{\tau}}$ by setting $\hat{f}_{|\hat{\tau}}(b) = \hat{p}_b$.

\par Recall from \Cref{thm:patches} that $(\sigma \circ f)$ carries patches of $\tau$ to patches of $\tau$ with the same topology. Here we show that this structure is preserved by $\hat{f}_{|\hat{\tau}}$:

\begin{lemma}\label{thm:$n$-gonspreserved}
    
    Let $P$ be a patch of $S^\circ \setminus \tau$, $\hat{P}$ the corresponding patch of $S^\circ \setminus \hat{\tau}$, $Q$ the patch $P$ is carried to by $f$, and $\hat{Q}$ the patch of $S^\circ \setminus \hat{\tau}$ corresponding to $Q$. Then 
    $\partial \hat{Q} \subseteq \hat{f}_{|\hat{\tau}}(\partial \hat{P})$, and the interior of the region bounded by 
    $\hat{f}_{|\hat{\tau}}(\partial \hat{P})$ which contains the interior of $\hat{Q}$
    is exactly the interior of $\hat{Q}$. $\hat{P}$ and $\hat{Q}$ have the same number of cusps.
\end{lemma}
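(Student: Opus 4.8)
The plan is to reduce the statement to its $\tau$-analogue, \Cref{thm:patches}, by observing that each of $\partial\hat P$, $\partial\hat Q$, and the image $\hat f_{|\hat\tau}(\partial\hat P)$ is obtained from the corresponding object for $\tau$ by one and the same operation — \emph{deleting every infinitesimal branch} (equivalently, collapsing each $I_\alpha$ to its new switch $v_\alpha$) — and then checking that this deletion commutes with taking boundaries of patches and with applying the map.

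First I would pin down $\partial\hat P$ combinatorially. No side of a patch of $S^\circ\setminus\tau$ is contained in $I$: a side meeting $N(I_\alpha)$ restricts there to a side of one of the pieces $P^\alpha_j$, and by \Cref{thm:infcomppatch} (together with the $w_\alpha=2$ case treated in its proof) every such side contains a non-infinitesimal branch. Hence every side of $P$ is a train path whose image is not contained in $I$, so \Cref{thm:trainpaths} produces the corresponding side of $\hat\tau$ obtained by omitting the infinitesimal branches. Matching these sides up across the cusps of $P$ via \Cref{patchcorresp} — and recalling that the angle structure on $\hat\tau$ was defined in $N(I_\alpha)$ precisely so that a cusp of $P$ sitting at an infinitesimal switch $v^\alpha_j$ persists as a cusp at $v_\alpha$ — shows that $\partial\hat P$ is exactly $\partial P$ with its infinitesimal branches deleted, reassembled in $\hat\tau$. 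The identical argument gives $\partial\hat Q$ from $\partial Q$; and since cusp counts are preserved by \Cref{patchcorresp} and agree for $P$ and $Q$ by \Cref{thm:patches}, $\hat P$ and $\hat Q$ have the same number of cusps.

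Next I would compute $\hat f_{|\hat\tau}(\partial\hat P) = \bigcup_b \hat f_{|\hat\tau}(b)$, the union over the branches $b$ of $\partial\hat P$. By construction $\hat f_{|\hat\tau}(b) = \hat p_b$ is the train path $p_b = f_{|\tau}(b)$ with its infinitesimal branches omitted; since the branches of $\partial\hat P$ are precisely the non-infinitesimal branches of $\partial P$ (reattached at the switches $v_\alpha$), and since an infinitesimal branch is carried by $f_{|\tau}$ to a single branch, again infinitesimal, it follows that $\hat f_{|\hat\tau}(\partial\hat P)$ is exactly $(\sigma\circ f)(\partial P) = \bigcup_b p_b$ with all infinitesimal branches deleted. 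Now \Cref{thm:patches} gives $(\sigma\circ f)(\partial P) = \partial Q \cup \bigcup_i q_i$, where $q_i$ is a ``spur'' emanating from the $i$-th cusp of $Q$ out an incident small branch; deletion turns $\partial Q$ into $\partial\hat Q$ (by the previous step) and each $q_i$ into a possibly trivial spur $\hat q_i$ at the corresponding cusp of $\hat Q$. Thus $\hat f_{|\hat\tau}(\partial\hat P) = \partial\hat Q \cup \bigcup_i \hat q_i$, which contains $\partial\hat Q$; and since the $q_i$ do not meet $\mathrm{int}(Q)$, their images do not meet $\mathrm{int}(\hat Q)$, so the component of the complement of $\hat f_{|\hat\tau}(\partial\hat P)$ containing $\mathrm{int}(\hat Q)$ coincides with the component of the complement of $\partial\hat Q$ containing it, namely $\mathrm{int}(\hat Q)$ itself.

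The crux is the second paragraph: controlling $\partial\hat P$ and $\partial\hat Q$ inside $N(I)$, i.e. making sure that deleting infinitesimal branches never disconnects the boundary of a patch, never destroys a cusp, and yields something honestly carried by $\hat\tau$ with the angle structure we defined. This is exactly where the local structure of $N(I_\alpha)$ is needed — that components of $I$ are embedded train paths (\Cref{thm:infcomp}), that along $I_\alpha$ the buttress branches on a given side all point the same way (\Cref{thm:infcompstruct}), and that the two sides of each $P^\alpha_j$ at $v^\alpha_j$ both leave $I_\alpha$ (\Cref{thm:infcomppatch}). Once that local picture is secured, the remaining ``commuting with deletion'' bookkeeping is routine.
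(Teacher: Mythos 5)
Your proposal is correct and follows essentially the same route as the paper: both control the sides of $\hat{P}$ via \Cref{thm:trainpaths} (with \Cref{thm:infcomppatch} guaranteeing they survive deletion of infinitesimal branches), apply the decomposition of \Cref{thm:patches} into $\partial Q$ plus spurs at the cusps, observe that the corresponding spurs in $\hat{\tau}$ have empty interior so the region containing $\mathrm{int}(\hat{Q})$ is exactly $\mathrm{int}(\hat{Q})$, and get the cusp count from \Cref{patchcorresp} together with \Cref{thm:patches}. Your extra bookkeeping (that infinitesimal branches map to single infinitesimal branches, so deletion commutes with the map) is a slightly more explicit justification of a step the paper leaves implicit, but it is the same argument.
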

\begin{proof}
    Let $p_i$ be the sides of $P$, so $\partial P = \bigcup_i p_i$. Then $\partial \hat{P} \subseteq \bigcup_i \hat{p}_i$, where $\hat{p}_i$ are the train paths in $\hat{\tau}$ corresponding to $p_i$ from \Cref{thm:trainpaths}.
    Each $\hat{p}_i$ is non-empty by \Cref{thm:infcomppatch}, and by \Cref{thm:patches} we have that
    $$\bigcup_i (\sigma \circ f)(p_i) = \partial Q \cup \left ( \bigcup_i q_i \right ),$$ where $q_i$ are train paths 
    starting at the cusps of $\hat{Q}$ and going away from $\hat{Q}$.
    Let $\hat{q_i}$ be the train paths in $\hat{\tau}$ corresponding to $q_i$ and let $\hat{R}$ be the closed region bounded by $\hat{f}_{|\hat{\tau}}(\partial \hat{P})$ containing $\hat{Q}$. Then $\hat{R} \setminus \hat{Q} = \bigcup_i \hat{q_i}$, which has empty interior. $\hat{P}$ and $\hat{Q}$ have the same number of cusps as $P$ and $Q$ respectively by \Cref{patchcorresp}, and $P$ and $Q$ have the same number of cusps by \Cref{thm:patches}.
\end{proof}
\par Now we need to extend $\hat{f}_{|\hat{\tau}}$ to all of $S^\circ$.
Let $\{ \hat{P_i} \}$ be the patches of $S^\circ \setminus \hat{\tau}$
and let $P_i$ be the patch of $S^\circ \setminus \tau$ corresponding to $\hat{P}_i$.
Let $h\colon \tau \cap \overline{N(I)} \to \hat{\tau} \cap \overline{N(I)}$ be a map sending $(\tau \setminus I) \cap \overline{N(I)}$ to $(\hat{\tau} \setminus \{ v_\alpha\}) \cap N(I)$ homeomorphically and sending $I_\alpha$ to $v_\alpha$.
This forces $h$ to send switches to switches and branches of $\tau \setminus I$ to the corresponding branches of $\hat{\tau}$. It follows that for a switch $\hat{v}$ of $\hat{\tau} \setminus \{v_\alpha\}$, $\hat{f}_{|\hat{\tau}}(v) = (h \circ \sigma \circ f \circ h^{-1})(v)$,
and by isotoping $h$ 
we may assume that $\hat{f}_{|\hat{\tau}}(x) = (h \circ \sigma \circ f \circ h^{-1})(x)$ for every $x$ in $\hat{\tau} \setminus \{ v_\alpha\}$.
Also define $h$ on $\partial N(I)$ by the identity map, which is possible because $\partial N(I) \cap \tau$ coincides with $\partial N(I) \cap \hat{\tau}$.
\par Let $\hat{V}$ be a piece of $\hat{P_i} \cap N(I)$, and $V$ the corresponding piece of $P_i \cap N(I)$.
Topologically, $\hat{V}$ and $V$ are both disks 
and $h$ restricts to a map from $\partial V \to \partial \hat{V}$ which is a homeomorphism on $\partial V \setminus I$, and collapses $\partial V \cap I$ to a point. The map $h$ restricted to $\partial V$ is topologically conjugate to the quotient map $q_{|\partial D}: \partial D \to \partial D/C$, where $D$ is a disk and $C$ is an interval in $\partial D$. This map extends to a quotient map $q: D \to D/C$ which is a homeomorphism on the interior of $D$. Thus, $h$ can be extended to a map on $V$ which is a homeomorphism on the interior.
Doing this on each piece of each patch defines a map $h \colon \overline{N(I)} \to \overline{N(I)}$. which is the identity on $\partial N(I)$ and is a homeomorphism on $\overline{N(I)} \setminus \tau$.
 Define $\hat{f}\colon S^\circ \to S^\circ$ by
$$ \hat{f}(x) = \begin{cases}
    \hat{f}_{|\hat{\tau}}(x) & x \in \hat{\tau} \\
    (h_{|N(I) \setminus \hat{\tau}} \circ \sigma \circ f \circ h^{-1}_{|N(I) \setminus \hat{\tau}})(x) & x \in N(I) \setminus \hat{\tau} \\
    (\sigma \circ f)(x) & \textrm{otherwise}
\end{cases}$$

We claim $\hat{f}$ is continuous, which is clear away from $I$. To see that it is continuous on $N(I)$, let $U \subseteq N(I)$ be an open set in $N(I)$. Define $U_{\hat{\tau}} = U \cap \hat{\tau}$ and $U_c = U \setminus U_{\hat{\tau}}$. 
Observe that because $\hat{f}_{|\hat{\tau}}(x) = (h \circ \sigma \circ f \circ h^{-1})(x)$ for $x$ in $\hat{\tau} \setminus \{v_\alpha\}$, $\hat{f}_{|\hat{\tau}}^{-1}(U_{\hat{\tau}})$ coincides with $h(f^{-1}(\sigma^{-1}(h^{-1}(U_{\hat{\tau}}))))$
So $$\hat{f}^{-1}(U) = \hat{f}_{|\hat{\tau}}^{-1}(U_{\hat{\tau}}) \bigsqcup 
h_{|N(I) \setminus \hat{\tau}}(f^{-1}(\sigma^{-1}(h_{|N(I) \setminus \hat{\tau}}^{-1}(U_c))))
$$

$$= h(f^{-1}(\sigma^{-1}(h^{-1}(U_{\hat{\tau}})))) \bigsqcup h_{|N(I) \setminus \hat{\tau}}(f^{-1}(\sigma^{-1}(h_{|N(I) \setminus \hat{\tau}}^{-1}(U_c))))$$
$$= h(f^{-1}(\sigma^{-1}(h^{-1}(U))))
$$

which is open because $h$ is continuous.
It is also clear that $\hat{f}(\hat{\tau}) \subseteq \hat{\tau}$ and $\hat{f}$ preserves the tangent space of $\hat{\tau}$, so $\hat{f}
\colon S^\circ \to S^\circ$ is a train track map.

\par 
We now want to return to thinking of $f$ as a map on the full surface $S$ by filling the punctures $\{v_i\}$ of $S^\circ$ back in.
Since $\hat{f}: S^\circ \to S^\circ$ sends patches to patches bijectively and every patch contains exactly one puncture, it permutes the punctures of $S^\circ$, and we may extend $\hat{f}$ to the singular points of $S$ by continuity.

\begin{lemma}\label{htpic}
$\hat{f}$ and $f$ are homotopic as maps on $S$.
\end{lemma}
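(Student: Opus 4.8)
The plan is to isolate a ``conjugation'' identity satisfied by $\hat f$ and then combine it with two elementary homotopies. Throughout, regard $h$ as extended by the identity to all of $S$ — legitimate since $h$ is the identity on $\partial N(I)$ and $N(I)$ can be chosen disjoint from the singular points $\{v_i\}$ — and abbreviate $F=\sigma\circ f\colon S\to S$. Introduce $\tilde F:=h\circ F\circ h^{-1}$ on $S\setminus\{v_\alpha\}$, extended over each new switch $v_\alpha$ by continuity; this is well defined because, as $x\to v_\alpha$, $h^{-1}(x)$ sweeps out a shrinking neighbourhood of $I_\alpha$, and since $F=\sigma\circ f$ carries infinitesimal branches to infinitesimal branches we have $F(I_\alpha)\subseteq I_\beta$ for a single $\beta$, so $h(F(h^{-1}(x)))\to v_\beta$. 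The point of the construction of $\hat f$ is exactly that $\hat f=\tilde F$: on $\hat\tau\setminus\{v_\alpha\}$ and on $N(I)\setminus\hat\tau$ the construction sets $\hat f=h\circ F\circ h^{-1}$ verbatim, and off $N(I)$ both maps equal $F$. (The ``otherwise'' clause of the piecewise formula for $\hat f$ literally records $F(x)$ rather than $h(F(x))$; these coincide except on the open set where $x\notin\hat\tau\cup N(I)$ yet $F(x)\in N(I)$, and there $\hat f(x)$ and $\tilde F(x)$ lie in the same disk $N(I_\beta)$ and agree on its frontier, so a straight-line homotopy in a disk chart supported in $N(I)$ gives $\hat f\simeq\tilde F$ — I would absorb this point at the outset.)

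Next I would record the two homotopies. Since $\sigma$ is a support map it is homotopic to $\mathrm{id}_S$ by clause (1) of the definition of carrying, so $F=\sigma\circ f\simeq f$. And $h\simeq\mathrm{id}_S$: outside $N(I)=\bigsqcup_\alpha N(I_\alpha)$ the map $h$ is the identity, while on each disk $\overline{N(I_\alpha)}$ it is a continuous self-map equal to the identity on the boundary circle (collapsing the arc $I_\alpha$ to $v_\alpha$). Any self-map of a $2$-disk fixing the boundary is homotopic rel boundary to the identity: two extensions of the same map $\partial D^2\to D^2$ over $D^2$ differ by an element of $\pi_2(D^2)=0$. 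Performing this on each $N(I_\alpha)$ and extending by the identity produces a homotopy $h_t$ with $h_0=\mathrm{id}_S$, $h_1=h$.

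The conjugation identity is now immediate:
\[
\tilde F\circ h \;=\; h\circ F\circ h^{-1}\circ h \;=\; h\circ F,
\]
where $h^{-1}\circ h=\mathrm{id}$ holds on $S\setminus I$, and both sides agree on $I$ as well since for $x\in I_\alpha$ each equals $v_\beta$, using $F(I_\alpha)\subseteq I_\beta$ and continuity of $\tilde F$ at $v_\alpha$. Concatenating: the homotopy $h_t$ post-composed with $F$ gives $F\simeq h\circ F$; the homotopy $\tilde F\circ h_t$ gives $\tilde F\simeq\tilde F\circ h$, which with $\hat f\simeq\tilde F$ from the first step yields $\hat f\simeq\tilde F\circ h$; hence
\[
\hat f\;\simeq\;\tilde F\;\simeq\;\tilde F\circ h\;=\;h\circ F\;\simeq\;F\;=\;\sigma\circ f\;\simeq\;f
\]
as maps $S\to S$, which is the assertion.

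I expect the main obstacle to be the bookkeeping at the collapsed locus: confirming that $\hat f$ genuinely is (up to a homotopy inside $N(I)$) the conjugate $h\circ F\circ h^{-1}$ and that this conjugate extends continuously over the new switches $v_\alpha$. Both hinge on the dynamical fact that $\sigma\circ f$ preserves the infinitesimal subgraph $I$ and permutes its components, together with the patch-to-patch and cusp-to-cusp behaviour of $\hat f$ already established; the homotopy-theoretic core ($\sigma\simeq\mathrm{id}$, $h\simeq\mathrm{id}$, and the chaining of homotopies) is formal.
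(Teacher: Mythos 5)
Your proof is correct in substance, but it takes a genuinely different route from the paper's. The paper argues homotopy-theoretically: using that all patches of $S^\circ\setminus\tau$ and $S^\circ\setminus\hat\tau$ are once-punctured polygons, it deformation-retracts $S^\circ$ onto the two tracks, shows via a commutative diagram of fundamental groups that $f$ and $\hat f$ induce the same map on $\pi_1(S^\circ)$, uses the fact that the two maps agree near the punctures to get a homotopy rel punctures, and then fills the singular points back in. You instead produce an explicit chain of homotopies: identify $\hat f$, up to a homotopy whose image only moves within the disks $N(I_\beta)$, with the conjugate $\tilde F=h\circ(\sigma\circ f)\circ h^{-1}$; then use $h\simeq\mathrm{id}$ (a self-map of each disk $\overline{N(I_\alpha)}$ fixing the boundary), $\sigma\simeq\mathrm{id}$, and the identity $\tilde F\circ h=h\circ F$. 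This is more elementary and constructive — no $K(\pi,1)$ or basepoint bookkeeping — and your reading of the ``otherwise'' clause is in fact the right interpretation of the paper's piecewise definition (the paper's own continuity computation, which ends with $\hat f^{-1}(U)=h(f^{-1}(\sigma^{-1}(h^{-1}(U))))$, only makes sense for the conjugated map, so your straight-line correction is the intended reconciliation rather than a defect of your argument). Two small points deserve a sentence each to be airtight: (i) the continuity of $\tilde F$ at $v_\alpha$ needs $(\sigma\circ f)(I_\alpha)\subseteq I_\beta$, which you assert; it follows because an infinitesimal branch maps under $f_{|\tau}$ onto a single branch that is again infinitesimal, and $I_\alpha$ is connected; (ii) the homotopy $\sigma\simeq\mathrm{id}$ is a priori a homotopy of $S^\circ$, so to run your chain on $S$ you should note that it can be taken supported in the fold disks, hence away from the singular points, and therefore extends over $S$ (alternatively, run the chain on $S^\circ$ rel cusp neighborhoods and fill in the singular points as the paper does). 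With those additions your argument is complete.
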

\begin{proof}
    Recall that $\sigma$ is homotopic the the identity, $N(I)$ is a disjoint union of disks, and $\hat{f}$ and $\sigma \circ f$ agree on the complement of $N(I)$.
    First we show that $\hat{f}$ and $f$ are homotopic as maps on $S^\circ$ rel punctures.
    Let $x_0$ be a point in $\tau \setminus N(I)$ and let $y_0 = f(x_0)$, so $x_0$ is also in $\hat{\tau}$, $y_0 = \hat{f}(x_0)$ and $y_0$ is also in both $\tau$ and $\hat{\tau}$. We will implicitly use $x_0$ and $y_0$ as basepoints for fundamental groups below, but suppress them from the notation.
    Since $\tau$ is a veering track, the patches of $S^\circ \setminus \tau$ are all once-punctured $n$-gons. By \Cref{patchcorresp}, the patches of $S^\circ \setminus \hat{\tau}$ are also all once-punctured $n$-gons. Hence, all of the patches of both tracks deformation retract onto their boundaries, which are contained in the tracks. Let $r\colon S^\circ \twoheadrightarrow \tau$, $\hat{r}\colon S^\circ \twoheadrightarrow \hat{\tau}$ be the retractions onto $\tau$ and $\hat{\tau}$ respectively, which both fix $x_0$ and $y_0$. Let 
    $r_*: \pi_1(S^\circ) \to \pi_1(\tau)$, 
    $\hat{r}_*: \pi_1(S^\circ) \to \pi_1(\hat{\tau})$
    be the induced homomorphisms, which are all isomorphisms.
    The inclusions $i\colon \tau \hookrightarrow S^\circ$ and $\hat{i}\colon \hat{\tau} \hookrightarrow S^\circ$ are homotopy inverses of $r$ and $\hat{r}$ and also fix $x_0$ and $y_0$. 
    Let $i_*: \pi_1(\tau) \to \pi_1(S^\circ)$,
    $\hat{i}_*: \pi_1(\hat\tau) \to \pi_1(S^\circ)$,
    be the induced isomorphisms.
     
    Let $(f_{|S^\circ})_*\colon \pi_1(S^\circ) \to \pi_1(S^\circ)$,
    $(f_{|\tau})_*\colon \pi_1(\tau) \to \pi_1(\tau)$,
    $(\hat{f}_{|\hat{\tau}})_*\colon \pi_1(\hat{\tau})\to\pi_1(\hat{\tau})$,
    $(\hat{f}_{|S^\circ})_*\colon \pi_1(S^\circ) \to \pi_1(S^\circ)$ be the homomorphisms induced by $f$ and $\hat{f}$. Those induced by $f$ are isomorphisms because $f$ is a homeomorphism. 
    
    Since $\hat{f}$ was defined to send patches of $S^\circ \setminus \hat{\tau}$ to the image under $f$ of the corresponding patches of $S^\circ \setminus \tau$, we have the following commutative diagram:

\begin{center}
    \begin{tikzcd}
    \pi_1(S^\circ) \arrow[r, "r_*"] \arrow[d, "(f_{|S^\circ})_*"] & \pi_1(\tau) \arrow[r, "(\hat{r} \circ i)_*"] \arrow[d, "(f_{|\tau})_*"] & \pi_1(\hat{\tau}) \arrow[r, "\hat{i}_*"] \arrow[d, "(\hat{f}_{|\hat{\tau}})_*"] & \pi_1(S^\circ) \arrow[d, "(\hat{f}_{S^\circ})_*"] \\
    \pi_1(S^\circ) \arrow[r, "r_*"]                               & \pi_1(\tau) \arrow[r, "(\hat{r} \circ i)_*"]                            & \pi_1(\hat{\tau}) \arrow[r, "\hat{i}_*"]                                        & \pi_1(S^\circ)                                   
    \end{tikzcd}
\end{center}
The fundamental groups on the top row are based at $x_0$, the fundamental groups on bottom are based at $y_0$, and all of the maps are well defined with respect to those basepoints.
The left square is commutative because $f_{|\tau}$ is the restriction to $\tau$ of $(\sigma \circ f)$, and $\sigma$ is homotopic to the identity on $S^\circ$.
Similarly, the right square is commutative because $\hat{f}_{|\hat{\tau}}$ is the restrictions of $\hat{f}$.
The center square is commutative by the correspondence between train paths in $\tau$ and $\hat{\tau}$, which coincides with $\hat{r} \circ i$.
    Moreover, since the punctures of $S^\circ$ are all outside of $N(I)$ and $\hat{f}$ agrees with $f$ on the complement of $N(I)$, in particular $\hat{f}$ and $f$ agree on a system of cusp neighborhoods of the punctures, so they are homotopic rel punctures on $S^\circ$.
    \par Now $\hat{f}$ extends to the original surface $S$ by continuity, and $f$ was already defined there. In fact, since $\hat{f}$ and $f$ agree on a neighborhood of a singular point $x_i$, $f(x_i) = \hat{f}(x_i)$. We conclude that $\hat{f}$ and $f$ are homotopic.
\end{proof}

\begin{lemma}\label{tcarried}
    $f(\hat{\tau})$ is carried by $\hat{\tau}$ with support map $\hat{f} \circ f^{-1}$.
\end{lemma}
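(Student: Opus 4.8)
The plan is to obtain this as an essentially immediate consequence of \Cref{supplemma}. In the paragraph preceding \Cref{htpic} we checked that $\hat{f}$ restricts to a train track map $S^\circ \to S^\circ$ with $\hat{f}(\hat{\tau}) \subseteq \hat{\tau}$, and \Cref{htpic} says $\hat{f}$ and $f$ are homotopic. Moreover $f$ restricts to a diffeomorphism of $S^\circ$, since the singularities of its invariant foliations are exactly the punctures of $S^\circ$, and both $\hat{\tau}$ and $f(\hat{\tau})$ lie in $S^\circ$ because $\tau$, hence $\hat{\tau}$, was isotoped off the singular points. Applying \Cref{supplemma} (whose statement and proof go through verbatim on the non-compact surface $S^\circ$) with $t = \hat{f}$, $g = f$, and $\tau_1 = \tau_2 = \hat{\tau}$ then yields $f(\hat{\tau}) \prec \hat{\tau}$ on $S^\circ$ with support map $\hat{f} \circ f^{-1}$.

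To finish, I would promote this to a carrying of train tracks on the filled-in surface $S$. The only point that needs checking is that $\hat{f} \circ f^{-1}$ is a $C^1$ self-map of $S$ homotopic to the identity. It is homotopic to the identity because $\hat{f} \simeq f$ by \Cref{htpic}. For smoothness near a singular point $v_i$: the point $f^{-1}(v_i)$ is again a singularity, so it lies outside $\hat{\tau} \cup N(I)$, where $\hat{f}$ is defined to equal $\sigma \circ f$; hence near $v_i$ we have $\hat{f} \circ f^{-1} = \sigma$, which is $C^1$, and away from the singularities $\hat{f} \circ f^{-1}$ is visibly $C^1$. The remaining conditions in the definition of carrying were already verified in the application of \Cref{supplemma}: $(\hat{f} \circ f^{-1})(f(\hat{\tau})) = \hat{f}(\hat{\tau}) \subseteq \hat{\tau}$, and at each $p \in f(\hat{\tau})$ the differential $d_p(\hat{f} \circ f^{-1})|_{f(\hat{\tau})}$ is the composite of the isomorphisms $d_p f^{-1}|_{f(\hat{\tau})}$ (as $f|_{S^\circ}$ is a diffeomorphism) and $d_{f^{-1}(p)}\hat{f}|_{\hat{\tau}}$ (as $\hat{f}$ is a train track map).

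I do not expect a serious obstacle here: the substantive work was already done in constructing $\hat{f}$ as a train track map taking $\hat{\tau}$ to itself and in proving \Cref{htpic}, and what remains is bookkeeping. The only mild subtlety is the non-smoothness of $f$ at the singularities of its foliations, which is harmless precisely because $\hat{\tau}$ and $f(\hat{\tau})$ avoid those points and $\hat{f} \circ f^{-1}$ collapses to the $C^1$ map $\sigma$ near them.
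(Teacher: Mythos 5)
Your proposal is correct and follows essentially the same route as the paper: apply \Cref{supplemma} on $S^\circ$ with $t=\hat{f}$, $g=f$ (using \Cref{htpic} and the fact that $\hat{f}$ is a train track map and $f$ restricts to a diffeomorphism of $S^\circ$), then pass to the filled-in surface $S$. The only difference is that you spell out the extension over the singular points (noting $\hat{f}\circ f^{-1}$ agrees with $\sigma$ near them), which the paper dispatches in a single sentence.
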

\begin{proof}
    Note that pseudo-Anosov maps are smooth away from the singularities of the unstable foliation, so $f$ is a diffeomorphism on $S^\circ$. Since $\hat{f}$ and $f$ are homotopic and $\hat{f}$, is a train track map, \Cref{supplemma} applies on $S^\circ$ with $t=\hat{f}$ and $g=f$, so $f(\hat{\tau}) \prec \hat{\tau}$ with support map $\hat{f} \circ f^{-1}$ on $S^\circ$. Then $\hat{f} \circ f^{-1}$ is also a support map for $f(\hat{\tau}) \prec \hat{\tau}$ on all of $S$.
\end{proof}

Let $\hat{M}$ be the transition matrix for $f(\hat{\tau}) \prec \hat{\tau}$ with support map $\hat{f} \circ f^{-1}$.

\begin{lemma}\label{firr}
    $\hat{M}$ is irreducible.
\end{lemma}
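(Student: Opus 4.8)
The plan is to show that $\hat M$ is, literally, the adjacency matrix of the reduced graph $G_\red$ appearing in \Cref{gred irr}; since that matrix is irreducible, so is $\hat M$. All of the content is in identifying the two index sets and matching up entries.

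First I would pin down the index set of $\hat M$. By construction, $\hat\tau$ is obtained from $\tau$ by deleting exactly the infinitesimal branches and re-attaching everything else (the boundary and buttress branches, together with all branches not incident to $I$) to the new switches $v_\alpha$, so the branches of $\hat\tau$ are in canonical bijection with the non-infinitesimal branches of $\tau$. By \Cref{infbranchprop}, a branch is infinitesimal precisely when its image in $\Delta$ is dual to the top edge of a wall tetrahedron, and by \Cref{fg sccs} (together with the wall/infinitesimal-cycle dictionary summarized in \Cref{wvts}) this is the case precisely when the flow-graph vertex it indexes lies on an infinitesimal cycle, i.e.\ is \emph{not} a vertex of $\Phi_\red$. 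Hence the branches of $\hat\tau$ correspond bijectively to the vertices of $\Phi_{\red}\ca$ in $\delta_0$, which are exactly the vertices of $G_\red$.

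Next I would compare entries. Fix non-infinitesimal branches $b$, $b'$ of $\tau$ and let $\hat b$, $\hat b'$ denote the corresponding branches of $\hat\tau$. By the definition of the transition matrix and of $\hat f_{|\hat\tau}$, the $(\hat b', \hat b)$-entry of $\hat M$ is the number of times the train path $\hat p_b = \hat f_{|\hat\tau}(\hat b)$ runs over $\hat b'$, where $\hat p_b$ is the path supplied by \Cref{thm:trainpaths} from $p_b = f_{|\tau}(b)$. Since \Cref{thm:trainpaths} produces a path running over exactly the non-infinitesimal branches appearing in $p_b$, with the same multiplicities, and $b'$ is non-infinitesimal, this count equals the number of times $p_b$ runs over $b'$ --- which is by definition the $(b', b)$-entry of the transition matrix $M$ of $f(\tau)\prec_\sigma\tau$. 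By \Cref{fgpaths} that matrix is the adjacency matrix of $G$, and since $G_\red$ is the full subgraph of $G$ on the vertices under consideration, this is also the $(b', b)$-entry of the adjacency matrix of $G_\red$. Putting the two steps together, $\hat M$ equals the adjacency matrix of $G_\red$, which is irreducible by \Cref{gred irr}.

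The step I expect to be the main obstacle is the entry-matching one. It requires being careful that the bookkeeping in \Cref{thm:trainpaths} preserves \emph{multiplicities} of traversal of non-infinitesimal branches, not merely the set of branches traversed, and that \Cref{thm:trainpaths} applies to every $p_b$ with $b$ non-infinitesimal --- equivalently, that $f_{|\tau}(b)$ always runs over at least one non-infinitesimal branch (this is already implicit in the definition of $\hat f_{|\hat\tau}$, and can also be seen from the fact that the positive transverse measure carried by $\tau$ vanishes on infinitesimal branches and is multiplied by the stretch factor $\lambda$ under $f$). One should also record that $\hat f \circ f^{-1}$ is combinatorial --- it sends switches of $\hat\tau$ to switches, including the new switches $v_\alpha$ --- so that $\hat M$ is defined in the first place; this can be verified from the construction of $\hat f$.
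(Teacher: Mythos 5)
Your proposal is correct and follows essentially the same route as the paper: identify the branches of $\hat\tau$ with the non-infinitesimal branches of $\tau$, observe via \Cref{thm:trainpaths} that $\hat f_{|\hat\tau}$ traverses exactly the non-infinitesimal branches (with multiplicity) of the corresponding $f_{|\tau}$-image, so that $\hat M$ is the submatrix of the adjacency matrix of $G$ on the $G_\red$-vertices, and conclude by \Cref{gred irr}. One inessential caveat: your parenthetical claim that the transverse measure carried by $\tau$ vanishes on infinitesimal branches is not right (a veering track fully carries the lamination, so all weights are positive); the fact that $f_{|\tau}(b)$ runs over some non-infinitesimal branch for non-infinitesimal $b$ is instead a consequence of the strong connectivity of $G_\red$, and is implicitly assumed in the paper as well.
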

\begin{proof}
    The branches of $\hat{\tau}$ are in bijection with the branches of $\tau \setminus I$, and by definition the train path $\hat{f}_{|\tau}(b_1)$ passes over exactly the branches in the corresponding image of $f$. Therefore the carrying map for $\hat{f}$ is given by the submatrix of the adjacency matrix of $G$ corresponding to the edges of $\tau$ which are not infinitesimal. This subgraph is $G_\red$, which has irreducible adjacency matrix by \Cref{gred irr}.
\end{proof}

\par In short, we have used a veering train track $\tau$ to produce a new train track $\hat{\tau}$ whose branches are in correspondence with the non-infinitesimal branches of $\hat{\tau}$, and produced a support map $f(\hat{\tau}) \prec \hat{\tau}$ which has irreducible transition matrix. We summarize this as follows:
\begin{thm}
    $f(\hat{\tau})$ is carried by $\hat{\tau}$ with carrying map $\hat{f} \circ f^{-1}$. The transition matrix for $f(\hat{\tau}) \prec \hat{\tau}$ is irreducible.
\end{thm}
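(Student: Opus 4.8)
The plan is to assemble the results of the preceding section, so that the statement becomes a repackaging of \Cref{tcarried} and \Cref{firr}. First I would record that $\hat\tau$ is a genuine train track: the taut angle structure around each new switch $v_\alpha$ was verified case by case (according to whether $w_\alpha = 2$ or $w_\alpha \ge 3$) just after the construction, using \Cref{thm:infcompstruct} and \Cref{thm:infcomppatch}, and \Cref{patchcorresp} identifies each complementary region of $\hat\tau$ with a once-punctured $n$-gon region of $\tau$, so $S^\circ \setminus \hat\tau$, and hence $S \setminus \hat\tau$, has no nullgons, unpunctured monogons, or unpunctured bigons. In particular $\hat\tau$ is filling and $f$-invariant.

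Next I would deduce the carrying statement directly from \Cref{tcarried}. The map $\hat f \colon S^\circ \to S^\circ$ is a train track map taking $\hat\tau$ to itself — this was checked by verifying continuity of $\hat f$ on $N(I)$ and noting that $\hat f(\hat\tau) \subseteq \hat\tau$ with $\hat f$ preserving the tangent directions of $\hat\tau$ — and $\hat f$ is homotopic to $f$ as maps on $S$ by \Cref{htpic}. Since $f$ restricts to a diffeomorphism of $S^\circ$, \Cref{supplemma} applies with $t = \hat f$ and $g = f$, giving $f(\hat\tau) \prec \hat\tau$ on $S^\circ$ with support map $\hat f \circ f^{-1}$; filling the punctures back in, $\hat f \circ f^{-1}$ is still a support map, so $f(\hat\tau) \prec_{\hat f \circ f^{-1}} \hat\tau$ on all of $S$, as claimed.

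Finally, for irreducibility I would argue as in \Cref{firr}. The support map $\hat f \circ f^{-1}$ is combinatorial: $\hat f_{|\hat\tau}$ was defined branch by branch by $\hat f_{|\hat\tau}(b) = \hat p_b$, where by \Cref{thm:trainpaths} the train path $\hat p_b$ runs precisely over the non-infinitesimal branches occurring (with multiplicity) in $f_{|\tau}(b)$, and in particular it carries switches of $\hat\tau$ to switches of $\hat\tau$; composing with the homeomorphism $f^{-1}$ preserves this. Hence the transition matrix $\hat M$ of $f(\hat\tau) \prec \hat\tau$ is well defined, and the branch correspondence between $\hat\tau$ and $\tau \setminus I$ together with \Cref{fgpaths} identifies $\hat M$ with the submatrix of the adjacency matrix of $G$ indexed by the non-infinitesimal branches, i.e. with the adjacency matrix of $G_\red$, which is irreducible by \Cref{gred irr}.

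I do not expect a real obstacle here, since all the content lives in the earlier lemmas (the wall structure of \cite{AT}, \Cref{thm:infcompstruct}, \Cref{thm:infcomppatch}, and the strong connectivity of $G_\red$). The one point I would be careful about is the claim that $\hat M$ is exactly the adjacency matrix of $G_\red$ — that collapsing each $I_\alpha$ to the switch $v_\alpha$ neither drops nor creates entries — which is precisely what the ``runs over exactly the corresponding non-infinitesimal branches'' clause of \Cref{thm:trainpaths} is designed to guarantee.
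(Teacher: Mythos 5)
Your proposal is correct and follows essentially the same route as the paper: the theorem there is stated as a summary of \Cref{tcarried} and \Cref{firr}, which is exactly the repackaging you carry out (with the same appeals to \Cref{supplemma}, \Cref{htpic}, \Cref{thm:trainpaths}, and \Cref{gred irr}). Your extra remarks that $\hat\tau$ satisfies the train track conditions and that the support map is combinatorial are consistent with what the paper establishes during the construction and add no divergence in method.
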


Putting all of this together, we prove \Cref{mainthm}:
\begin{proof}
    By \Cref{ttfs}, $f$ has an invariant train track $\tau$ which folds to its image, so we can build the veering triangulation $\Delta$ of the fully-punctured mapping torus using this folding sequence. If $\Delta$ does not contain a wall, the transition matrix for $f(\tau) \prec \tau$ is already irreducible. If not, we construct $\hat{\tau}$ as described in this section with transition matrix $\hat{M}$ for $f(\hat{\tau}) \prec_{(\hat{f} \circ f^{-1})} \hat{\tau}$.
    By \Cref{firr}, $\hat{M}$ is irreducible.
\end{proof}

\bibliography{bib}
\bibliographystyle{amsalpha} 
\end{document}